\newcommand{\bl}[1]{\ensuremath{\mathscr{#1}}}
\renewcommand{\cref}{\Cref}
\setlist[enumerate,1]{label=(\roman*), ref=(\roman*)}
\title{On Unipotent Supports of Reductive Groups with a Disconnected Centre}
\author{Jay Taylor}
\address{FB Mathematik, TU Kaiserslautern, Postfach 3049, 67653 Kaiserslautern, Germany}
\email{taylor@mathematik.uni-kl.de}
\begin{document}
\start
\begin{abstract}
Let $\bG$ be a simple algebraic group defined over a finite field of good characteristic, with associated Frobenius endomorphism $F$. In this article we extend an observation of Lusztig, (which gives a numerical relationship between an ordinary character of $\bG^F$ and its unipotent support), to the case where $Z(\bG)$ is disconnected. We then use this observation in some applications to the ordinary character theory of $\bG^F$.
\end{abstract}

Throughout this article $\bG$ will be a connected reductive algebraic group over $\overline{\mathbb{F}}_p$, an algebraic closure of the finite field $\mathbb{F}_p$, where $p$ is a good prime for $\bG$. Furthermore, we assume $\bG$ is defined over $\mathbb{F}_q \subset \overline{\mathbb{F}}_p$ (where $q$ is a power of a $p$) and $F:\bG\to \bG$ is the associated Frobenius endomorphism. Throughout we will denote an algebraic group in bold and its corresponding rational structure in roman, for instance $G := \bG^F$.

\section{Introduction}
One of the major open problems in the representation theory of finite groups is the determination of the full ordinary character table of $G$. Arguably one of the most difficult parts in computing the character table of $G$ comes from determining the character values at unipotent elements. To handle this problem Lusztig has developed a geometric theory of character sheaves, which gives the framework to express links between the geometry of $\bG$ and the character theory of $G$.

Let $\Irr(G)$ denote the set of ordinary irreducible characters of $G$. In \cite{lusztig:1992:a-unipotent-support} Lusztig has associated to every $\chi \in \Irr(G)$ a unique $F$-stable unipotent class of $\bG$ called the \emph{unipotent support} of $\chi$, which we denote by $\mathcal{O}_{\chi}$. This was originally done under the assumption that $p$ and $q$ are sufficiently large, however the assumptions on $p$ and $q$ were later removed in \cite{geck-malle:2000:existence-of-a-unipotent-support}. A consequence of the existence of unipotent supports is that we have a surjective map
\begin{equation*}
\Phi_G : \Irr(G) \to \{F\text{-stable unipotent conjugacy classes of }\bG\}
\end{equation*}
given by $\Phi_G(\chi) := \mathcal{O}_{\chi}$.

Recall that Lusztig has shown for each $\chi \in \Irr(G)$ there is a well defined integer $n_{\chi}$ such that $n_{\chi}\cdot \chi(1)$ is a polynomial in $q$ with integer coefficients. If $x \in \bG$ then we write $A_{\bG}(x)$ for the component group $C_{\bG}(x)/C_{\bG}(x)^{\circ}$. The following result was observed by Lusztig in \cite[13.4.4]{lusztig:1984:characters-of-reductive-groups} and later verified by H\'{e}zard and Lusztig through a detailed case by case analysis, (see \cite{lusztig:2009:unipotent-classes-and-special-Weyl} and \cite{hezard:2004:thesis}).

\begin{thm}[H\'{e}zard, Lusztig]\label{thm:lusztig-hezard}
Assume $\bG/Z(\bG)$ is simple, $Z(\bG)$ is connected and $\mathcal{O}$ is an $F$-stable unipotent class of $\bG$. There exists a character $\chi \in \Irr(G)$ such that $\Phi_G(\chi)=\mathcal{O}$ and $n_{\chi} = |A_{\bG}(u)|$ for any $u \in \mathcal{O}$.
\end{thm}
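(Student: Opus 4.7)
The plan is to exploit Lusztig's partition of $\Irr(G)$ into families, together with the generalized Springer correspondence. Every family $\mathcal{F}\subset\Irr(G)$ is attached to a unique special unipotent class and to a finite (generally non-abelian) group $\mathcal{G}_\mathcal{F}$; the members of $\mathcal{F}$ are parametrized by $\mathcal{G}_\mathcal{F}$-conjugacy classes of pairs $(x,\sigma)$ with $x\in\mathcal{G}_\mathcal{F}$ and $\sigma\in\Irr(C_{\mathcal{G}_\mathcal{F}}(x))$. Under this parametrization, the invariant $n_\chi$ for $\chi=\chi_{(x,\sigma)}$ is determined group-theoretically by the pair, and Lusztig's theorem on the existence of unipotent supports implies that $\mathcal{O}_\chi$ depends only on the $\mathcal{G}_\mathcal{F}$-class of $x$.

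First, for a fixed $F$-stable unipotent class $\mathcal{O}$, I would locate the (unique) family $\mathcal{F}$ whose associated special class lies in the same special piece as $\mathcal{O}$: by surjectivity of $\Phi_G$ and the compatibility of $\Phi_G$ with families, this is the only family that can contribute a character with $\Phi_G(\chi)=\mathcal{O}$. Second, within $\mathcal{F}$ I would search for a pair $(x,\sigma)$ whose associated unipotent class under Spaltenstein-type duality is exactly $\mathcal{O}$ and for which $|C_{\mathcal{G}_\mathcal{F}}(x)|=|A_\bG(u)|$. The component group $A_\bG(u)$ arises naturally as a canonical quotient of $C_{\mathcal{G}_\mathcal{F}}(x)$ from the construction of $\mathcal{G}_\mathcal{F}$ via the generalized Springer correspondence, so the orders frequently coincide at the right element; when they do not, one must identify an element in a different $\mathcal{G}_\mathcal{F}$-class having the correct centralizer order yet the same image under duality.

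Third, I would verify existence of such a pair by case analysis: a uniform argument in the classical types, based on partition combinatorics for unipotent classes and the fact that $\mathcal{G}_\mathcal{F}$ is an elementary abelian $2$-group, together with an exhaustive check in the exceptional types via the detailed tables of H\'ezard. The principal obstacle is precisely this latter verification, particularly in type $E_8$ where $\mathcal{G}_\mathcal{F}$ may equal $S_5$ and several component groups admit subtle matchings between conjugacy classes and centralizer orders. One must additionally ensure that the chosen pair $(x,\sigma)$ yields an actual character of $G$: this is automatic when $F$ acts trivially on the Dynkin diagram, but in the twisted types requires checking that the $F$-action on $\mathcal{G}_\mathcal{F}$ fixes the pair up to conjugacy, leveraging the hypothesis that $\mathcal{O}$ itself is $F$-stable.
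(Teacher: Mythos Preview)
The paper does not itself prove this theorem; it is quoted as an established result of H\'ezard and Lusztig, obtained by a detailed case-by-case analysis (see the references to \cite{lusztig:2009:unipotent-classes-and-special-Weyl} and \cite{hezard:2004:thesis}). The structure of that argument is, however, visible in the paper's extension \cref{thm:A} and \cref{prop:A}: for a given class $\mathcal{O}$ one chooses a semisimple element $\tilde s$ in the dual group and a unipotent character $\psi$ of $C_{G^\star}(s)^\circ$ satisfying the properties \ref{P1}--\ref{P3}; the desired $\chi$ is then any constituent of the restriction of the corresponding $\tilde\psi\in\mathcal{E}(\tilde G,\tilde s)$. The heart of the matter is the choice of $\tilde s$.

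Your proposal contains a genuine gap precisely at this point. You assert that within a single family $\mathcal{F}\subset\Irr(G)$ the unipotent support $\mathcal{O}_\chi$ of $\chi=\chi_{(x,\sigma)}$ depends on the $\mathcal{G}_\mathcal{F}$-class of $x$, and you plan to vary $x$ inside $\mathcal{F}$ until you hit the prescribed class $\mathcal{O}$. This is false: the unipotent support is constant on a family. All unipotent characters in the family attached to a special class $\mathcal{O}_{\mathrm{sp}}$ have $\mathcal{O}_\chi=\mathcal{O}_{\mathrm{sp}}$, so if $\mathcal{O}$ is not special no unipotent character whatsoever has $\Phi_G(\chi)=\mathcal{O}$. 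Your ``second step'' therefore cannot succeed, and the phrase ``the (unique) family $\mathcal{F}$ whose associated special class lies in the same special piece as $\mathcal{O}$'' does not pick out a family containing any character with the required support.

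What actually happens in the H\'ezard--Lusztig argument (and in the paper's \cref{prop:A}) is that one leaves the principal series: for each $\mathcal{O}$ one produces a semisimple element $s\in G^\star$ (quasi-isolated in the adjoint quotient) such that a suitable family of unipotent characters of $C_{G^\star}(s)^\circ$, transported to $\mathcal{E}(G,s)$ via Jordan decomposition, has unipotent support equal to $\mathcal{O}$. The case-by-case work lies in exhibiting such an $s$ and then checking inside that family that some $\psi$ has $n_\psi=|A_{\bG}(u)|$. Your sketch omits the Jordan decomposition entirely and with it the essential freedom to move among Lusztig series; without that ingredient the argument cannot reach the non-special classes.
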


\noindent The usefulness of characters satisfying this technical condition can be seen in Kawanaka's theory of \emph{generalised Gelfand--Graev representations} (GGGRs). In particular, if $\chi$ satisfies this technical condition then it severly limits the appearance of $\chi$ as a constituent of the character of a GGGR associated to a unipotent element in $\mathcal{O}_{\chi}^F$.

The main result of this article is an extension of \cref{thm:lusztig-hezard} to include all simple groups, see \cref{thm:A}. The layout of this paper is as follows. In \cref{sec:comp-groups} we will show there exist class representatives of each $F$-stable unipotent class such that a sensible generalisation of \cref{thm:lusztig-hezard} can be made, (we call such representatives \emph{well chosen}). In \cref{sec:main-result} we will state and prove our main result. Using this result we prove in Sections \cref{sec:GGGR} to \cref{sec:conjecture}, (under the assumption $p$ and $q$ are large), that a conjecture of Kawanaka on unipotently supported class functions holds for simple groups which are not spin / half-spin groups. This is based on the work of Geck and H\'{e}zard in \cite{geck-hezard:2008:unipotent-support} which uses \cref{thm:lusztig-hezard}. Finally, in Sections \cref{sec:fourth-roots} and \cref{sec:orth-symp-groups} we give an expression of a certain fourth root of unity that is associated to GGGRs when $\bG$ is a special orthogonal or symplectic group. This is based on an argument used by Geck in \cite{geck:1999:character-sheaves-and-GGGRs} which uses \cref{thm:lusztig-hezard}.

\begin{acknowledgments}
The work presented in this article was completed as part of the authors PhD qualification under the supervision of Prof.\ Meinolf Geck. The author wishes to express his deepest thanks to Prof.\ Geck for proposing this problem and for his vital guidance. Part of this work was completed while the author spent six months at Lehrstul D f\"{u}r Mathematik, RWTH Aachen University as an ERASMUS exchange student and also while the author spent two weeks at Universit\'{e} Montpellier 2. The author warmly thanks both institutions for their kind hospitality during his visits. The author also thanks Dr.\ Frank L\"{u}beck for many essential discussions, (in particular for suggesting the approach taken in section 2 of this paper), and Prof.\ C\'{e}dric Bonnaf\'{e}, (for assistance with some technical issues in type $\A$).
\end{acknowledgments}

\section{Component Groups}\label{sec:comp-groups}
For the remainder of this article, unless otherwise stated, we will assume $\bG$ is simple. We fix a regular embedding $\iota : \bG \to \btG$, (as defined in \cite[\S 7]{lusztig:1988:reductive-groups-with-a-disconnected-centre}), where $\btG$ is a connected reductive group such that $Z(\btG)$ is connected. We now fix a dual group of $\bG$, (resp.\ $\btG$), which we denote by $\bG^{\star}$, (resp.\ $\btG^{\star}$) and a Frobenius endomorphism $F^{\star} : \bG^{\star} \to \bG^{\star}$ such that the pairs $(\bG,F)$ and $(\bG^{\star},F^{\star})$ are in duality. Similarly we denote again by $F$ and $F^{\star}$ compatible Frobenius endomorphisms of $\bG$ and $\bG^{\star}$. Recall that $\iota$ determines a corresponding surjective morphism $\iota^{\star} : \btG^{\star} \to \bG^{\star}$ between dual groups, which is defined over $\mathbb{F}_q$. For smoothness of the exposition if $Z(\bG)$ is connected then we will take $\btG$ to be $\bG$ and $\iota$ to be such that $\iota(g) = g$ for all $g \in \bG$.

The embedding $\iota$ defines a bijection between the unipotent conjugacy classes of $\bG$ and the corresponding classes in $\btG$. If $u$ is a unipotent element of $\bG$ then we will implicitly identify $u$ with its image $:= \iota(u) \in \btG$. The purpose of this section is to show that in a simple algebraic group we can always pick representatives of an $F$-stable unipotent class such that they are well chosen. By this we mean the following.

\begin{definition}
Let $\mathcal{O}$ be an $F$-stable unipotent class of $\bG$. We say a class representative $u \in \mathcal{O}^F$ is \emph{well chosen} if $|A_{\bG}(u)^F| = |Z_{\bG}(u)^F||A_{\btG}(u)|$, where $Z_{\bG}(u)$ is the image of $Z(\bG)$ in $A_{\bG}(u)$.
\end{definition}

\noindent We will start by showing that well-chosen class representatives always exist when $\bG$ is adjoint.

\begin{lem}\label{lem:abelian-comp-group}
Let $\bG$ be a connected reductive algebraic group and $\mathcal{C}$ an $F$-stable conjugacy class of $\bG$ such that $A_{\bG}(x)$ is abelian for some $x \in \mathcal{C}^F$. Then the following are equivalent:
\begin{enumerate}
	\item the number of $G$-conjugacy classes contained in $\mathcal{C}^F$ is $|A_{\bG}(x)|$\label{pt:i},
	\item $A_{\bG}(x)^F = A_{\bG}(x)$\label{pt:ii},
	\item $A_{\bG}(x')^F = A_{\bG}(x')$ for all $x' \in \mathcal{C}^F$\label{pt:iii}.
\end{enumerate}
\end{lem}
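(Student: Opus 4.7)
The plan is to invoke the classical parametrization of $G$-conjugacy classes inside a rational $\bG$-class. Since $x \in \mathcal{C}^F$ the Frobenius $F$ induces an automorphism of $A := A_{\bG}(x)$, and by a standard consequence of the Lang--Steinberg theorem the $G$-conjugacy classes contained in $\mathcal{C}^F$ are in bijection with the $F$-conjugacy classes of $A$. Under the abelianness hypothesis this bijection becomes particularly clean.

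First I would exploit the fact that $A$ is abelian. In that case $F$-conjugation reduces to translation: two elements $a, b \in A$ are $F$-conjugate if and only if $ab^{-1} \in L(A)$, where $L : A \to A$ is the Lang map $c \mapsto cF(c)^{-1}$. Thus the $F$-classes are precisely the cosets of $L(A)$ in $A$. Since $\ker L = A^F$, elementary group theory gives $|A|/|L(A)| = |A^F|$, so the number of $G$-conjugacy classes in $\mathcal{C}^F$ is equal to $|A_{\bG}(x)^F|$.

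With this count in hand, the equivalence of \textup{(i)} and \textup{(ii)} is immediate, as \textup{(i)} asserts that this number is $|A_{\bG}(x)|$, which happens if and only if $A_{\bG}(x)^F = A_{\bG}(x)$. For \textup{(ii)} $\Leftrightarrow$ \textup{(iii)} the key observation is that the number of $G$-classes in $\mathcal{C}^F$ depends only on $\mathcal{C}$, not on the chosen representative: applying the same count to any $x' \in \mathcal{C}^F$ forces $|A_{\bG}(x')^F| = |A_{\bG}(x)^F|$, and since $|A_{\bG}(x')| = |A_{\bG}(x)|$ trivially, the property $A_{\bG}(x)^F = A_{\bG}(x)$ for the chosen $x$ is equivalent to the analogous statement for every $x' \in \mathcal{C}^F$; the implication \textup{(iii)} $\Rightarrow$ \textup{(ii)} is of course trivial, as $x$ itself lies in $\mathcal{C}^F$. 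The only conceptual ingredient is the Lang--Steinberg parametrization of rational classes; once that is in place the argument is a short counting exercise, and I anticipate no serious obstacle beyond correctly setting up this bijection.
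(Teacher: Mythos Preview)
Your proof is correct and follows essentially the same approach as the paper: both rely on the Lang--Steinberg parametrization of $G$-classes in $\mathcal{C}^F$ by $F$-conjugacy classes of $A_{\bG}(x)$, then use abelianness to control these, and finally observe that statement (i) is independent of the chosen representative to pass between (ii) and (iii). The only cosmetic difference is that you compute the number of $F$-classes explicitly as $|A_{\bG}(x)^F|$ via the Lang map as a group homomorphism, whereas the paper argues directly that this number equals $|A_{\bG}(x)|$ if and only if each $F$-class is a singleton, i.e.\ $F$ acts trivially; your formulation yields a slightly sharper intermediate statement but the logic is the same.
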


\begin{proof}
By \cite[Theorem 4.3.5]{geck:2003:intro-to-algebraic-geometry} we know the $G$-classes contained in $\mathcal{C}^F$ are in bijective correspondence with the $F$-conjugacy classes of $A_{\bG}(x)$. Therefore the number of $G$-classes contained in $\mathcal{C}^F$ is the same as $|A_{\bG}(x)|$ if and only if every $F$-conjugacy class of $A_{\bG}(x)$ contains only one element. As $A_{\bG}(x)$ is abelian this proves the equivalence of the first two statements. If \ref{pt:i} is true for $x \in \mathcal{C}^F$ then it is true for all $x \in \mathcal{C}^F$ as $|A_{\bG}(x)|$ is independent of the choice of $x$. Therefore this together with the equivalence of \ref{pt:i} and \ref{pt:ii} gives the equivalence of \ref{pt:i} and \ref{pt:iii}.
\end{proof}

\begin{lem}\label{lem:inner-auto-comp-grp}
Let $\bG$ be a connected reductive algebraic group and $\mathcal{C}$ an $F$-stable conjugacy class of $\bG$. If the automorphism of $A_{\bG}(x)$ induced by $F$, for some $x \in \mathcal{C}^F$, is an inner automorphism then there exists $x' \in \mathcal{C}^F$ such that $A_{\bG}(x')^F = A_{\bG}(x')$.
\end{lem}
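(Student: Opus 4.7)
My plan is to exploit the standard parametrisation of the $G$-conjugacy classes in $\mathcal{C}^F$ by $F$-conjugacy classes in $A_{\bG}(x)$, together with the explicit description of how the $F$-action on the component group twists as we move to a different $F$-fixed representative of $\mathcal{C}$. The key technical input is Lang--Steinberg applied to the connected group $\bG$.

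Fix $x \in \mathcal{C}^F$ and write $\varphi$ for the automorphism of $A_{\bG}(x)$ induced by $F$; by hypothesis $\varphi = \mathrm{ad}(\bar{b})$ for some $\bar{b} \in A_{\bG}(x)$. For any $g \in \bG$ with $g^{-1}F(g) \in C_{\bG}(x)$, the element $x' := gxg^{-1}$ lies in $\mathcal{C}^F$, and conjugation by $g$ yields an isomorphism $c_g : C_{\bG}(x) \to C_{\bG}(x')$. A direct computation shows that under $c_g$, the Frobenius on $C_{\bG}(x')$ pulls back to $h \mapsto (g^{-1}F(g)) F(h) (g^{-1}F(g))^{-1}$ on $C_{\bG}(x)$, so passing to the quotient the induced automorphism of $A_{\bG}(x) \cong A_{\bG}(x')$ is $a \mapsto \bar{c}\,\varphi(a)\,\bar{c}^{-1}$, where $\bar{c}$ denotes the image in $A_{\bG}(x)$ of $g^{-1}F(g)$.

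I would then apply Lang--Steinberg to $\bG$: for the specific element $b \in C_{\bG}(x)$ lifting $\bar{b}$, there exists $g \in \bG$ with $g^{-1}F(g) = b^{-1}$, which automatically lies in $C_{\bG}(x)$, so $x' := gxg^{-1} \in \mathcal{C}^F$. With this choice we have $\bar{c} = \bar{b}^{-1}$, and the twisted Frobenius on $A_{\bG}(x')$ becomes $a \mapsto \bar{b}^{-1}\,\varphi(a)\,\bar{b} = \bar{b}^{-1}\bar{b}\,a\,\bar{b}^{-1}\bar{b} = a$. Hence $F$ acts trivially on $A_{\bG}(x')$, i.e.\ $A_{\bG}(x')^F = A_{\bG}(x')$, as required.

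The main point to get right is the twisting formula for the $F$-action on the component group under a change of representative; once that is in hand, the hypothesis that $\varphi$ is inner is exactly what is needed to trivialise the new action by an appropriate choice of $\bar{c}$, and surjectivity of the Lang map on $\bG$ guarantees that every element of $A_{\bG}(x)$ is realisable as $\bar{c}$ for some $x'$. I do not expect any serious obstacle; the only mild subtlety is keeping careful track of whether one needs inner conjugation by $\bar{b}$ or $\bar{b}^{-1}$ in the final step.
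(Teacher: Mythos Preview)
Your proof is correct and is essentially the same argument as the paper's: both apply Lang--Steinberg to $\bG$ to find $g$ with $g^{-1}F(g)$ lifting the element realising the inner automorphism, then observe that conjugating $x$ by $g$ trivialises the induced $F$-action on the component group. The only difference is cosmetic---you phrase it via the twisting formula for the $F$-action under change of representative, whereas the paper verifies the identity $F(gzC_{\bG}(x)^{\circ}g^{-1}) = gzC_{\bG}(x)^{\circ}g^{-1}$ directly.
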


\begin{proof}
Assume there exists an element $y \in C_{\bG}(x)$ such that for all $zC_{\bG}(x)^{\circ} \in A_{\bG}(x)$ we have\linebreak $F(zC_{\bG}(x)^{\circ}) = y^{-1}zyC_{\bG}(x)^{\circ}$. As $y \in \bG$, by the Lang-Steinberg theorem, there exists $g \in \bG$ such that $y = g^{-1}F(g) \Rightarrow F(g) = gy$. Clearly $A_{\bG}({}^gx) = {}^gA_{\bG}(x)$ so $gzC_{\bG}(x)^{\circ}g^{-1} \in A_{\bG}({}^gx)$. Furthermore we have
\begin{equation*}
F(gzC_{\bG}(x)^{\circ}g^{-1}) = gy(y^{-1}zy)C_{\bG}(x)^{\circ}y^{-1}g^{-1} = gzC_{\bG}(x)^{\circ}g^{-1},
\end{equation*}
so $F$ acts trivially on $A_{\bG}({}^gx)$. Taking $x' = {}^gx \in \mathcal{C}^F$ we have $A_{\bG}(x')^F = A_{\bG}(x')$ as required. Note $x'$ is fixed by $F$ because $y \in C_{\bG}(x)$.
\end{proof}

\begin{prop}\label{thm:triv-comp-action-adj}
Let $\bG$ be a connected reductive algebraic group with connected centre such that $\bG/Z(\bG)$ is simple, then for any $F$-stable unipotent class $\mathcal{O} \subset \bG$ we have $A_{\bG}(u)^F = A_{\bG}(u)$ for some unipotent element $u \in \mathcal{O}^F$.
\end{prop}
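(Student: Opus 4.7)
The strategy is to apply Lemma \ref{lem:inner-auto-comp-grp}: it suffices to find some $u \in \mathcal{O}^F$ such that the automorphism of $A_{\bG}(u)$ induced by $F$ is inner, after which that lemma produces a representative on which $F$ acts trivially on the component group.

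First I would reduce to the adjoint case. The quotient $\pi : \bG \to \bG_{\mathrm{ad}} := \bG/Z(\bG)$ has connected kernel, so it induces an $F$-equivariant isomorphism $A_{\bG}(u) \cong A_{\bG_{\mathrm{ad}}}(\pi(u))$ for every unipotent $u$, and a bijection between $F$-stable unipotent classes of $\bG$ and of $\bG_{\mathrm{ad}}$. Moreover, since $Z(\bG)$ is connected, Lang--Steinberg guarantees that every $F$-fixed representative in $\bG_{\mathrm{ad}}$ lifts to an $F$-fixed element of $\bG$ lying in the corresponding class. Thus we may assume $\bG$ is simple of adjoint type.

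Next I would proceed by case analysis. Pick any $u_0 \in \mathcal{O}^F$, set $A := A_{\bG}(u_0)$, and let $\tau$ be the automorphism of $A$ induced by $F$. The standard tables (Bala--Carter for classical types, Mizuno and Spaltenstein for the exceptional types) show that $A$ is isomorphic to one of the following: the trivial group, a cyclic group $\mathbb{Z}/r$ with $r \in \{2,3\}$, an elementary abelian $2$-group, or one of the symmetric groups $S_3, S_4, S_5$. When $A \cong S_n$ with $n \in \{3,4,5\}$, we have $\mathrm{Out}(A) = 1$, so $\tau$ is automatically inner and Lemma \ref{lem:inner-auto-comp-grp} concludes. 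In the abelian cases ``inner'' means ``trivial,'' so one must further produce a representative on which $F$ acts as the identity on $A$. In classical types this follows because $A$ is generated by canonical involutions indexed by the (equivalence classes of) parts of the partition parametrising $\mathcal{O}$, and these generators are preserved by any $F$ for which $\mathcal{O}$ is $F$-stable. The remaining abelian cases in exceptional types can be handled by direct inspection, exhibiting an $F$-stable Bala--Carter representative on which $F$ acts trivially.

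The main obstacle is not conceptual but combinatorial: showing that in each abelian case a concrete choice of representative witnesses $\tau = 1$. Extra care is needed for twisted Frobenius endomorphisms (types $^2\!A_n$, $^2\!D_n$, $^2\!E_6$, $^3\!D_4$), where one must verify that the graph-automorphism component of $F$ acts trivially on $A$ whenever $\mathcal{O}$ is $F$-stable; this is a short check but absorbs most of the bookkeeping.
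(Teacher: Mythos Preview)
Your overall architecture matches the paper's: reduce to a group with connected centre via the quotient by $Z(\bG)$, then invoke \cref{lem:inner-auto-comp-grp} for the non-abelian component groups in exceptional types (where $\mathrm{Out}(A)=1$), and treat the abelian cases separately. The genuine difference is in how the abelian classical case is handled. The paper does \emph{not} construct explicit representatives or argue that ``canonical involutions are preserved by $F$''; instead it counts. Using Wall's enumeration of unipotent $G$-classes in symplectic and special orthogonal groups, the paper observes that the number of $G$-classes inside $\mathcal{O}^F$ equals $|A_{\bar{\bG}}(\bar u)|$, and then \cref{lem:abelian-comp-group} forces $F$ to act trivially on $A_{\bar{\bG}}(\bar u)$ for \emph{every} $\bar u\in\mathcal{O}^F$. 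The adjoint types $\C_n$ and $\D_n$ are then deduced from the symplectic/special-orthogonal case via the surjection on component groups induced by the isogeny. This buys uniformity (no representative has to be written down, and nothing special needs to be said about twisted Frobenius maps), whereas your route---exhibiting a concrete $u$ whose generating involutions are visibly $F$-fixed---is correct in principle but leaves the actual verification implicit; the phrase ``preserved by any $F$ for which $\mathcal{O}$ is $F$-stable'' is the step that would need to be made precise for a chosen $u$, and is exactly the bookkeeping the paper's counting argument sidesteps.

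A small inaccuracy: for adjoint exceptional groups the component groups are exactly the symmetric groups $\mathfrak{S}_n$ with $n\leqslant 5$ (including $\mathfrak{S}_2$), so the paper disposes of \emph{all} exceptional cases uniformly via \cref{lem:inner-auto-comp-grp}; a standalone $\mathbb{Z}/3$ does not occur and need not be treated separately.
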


\begin{proof}
The natural surjective morphism of algebraic groups $\pi : \bG \to \bG/Z(\bG)$ induces a bijection between the unipotent classes of $\bG$ and $\bG/Z(\bG)$. If $u \in \bG$ is unipotent then the restriction of $\pi$ to $C_{\bG}(u)$ induces an isomorphism $A_{\bG}(u) \cong A_{\bG/Z(\bG)}(\pi(u))$, which is defined over $\mathbb{F}_q$. Therefore we may prove this result using any group $\bG$ which has a connected centre and simple quotient $\bG/Z(\bG)$.

The structure of the component groups $A_{\bG}(u)$ have been determined on a case by case basis. In the case where $\bG$ is an adjoint exceptional group it is known that $A_{\bG}(u)$ is either trivial or isomorphic to a symmetric group $\mathfrak{S}_2$, $\mathfrak{S}_3$, $\mathfrak{S}_4$ or $\mathfrak{S}_5$, (see for example the tables in \cite[\S 13.1]{carter:1993:finite-groups-of-lie-type}). Every automorphism of such a group is an inner automorphism, so the result holds by \cref{lem:inner-auto-comp-grp}.

We now turn to the classical groups. The case of type $\A_n$ is trivial as all component groups are trivial. Assume $\bar{\bG}$ is either a symplectic or special orthogonal group and $\bar{\mathcal{O}} \subset \bar{\bG}$ is an $F$-stable unipotent class with representative $\bar{u} \in \bar{\mathcal{O}}^F$. The component group $A_{\bar{\bG}}(\bar{u})$ is an elementary abelian 2-group whose order is given in \cite[IV - 2.26 and 2.27]{springer-steinberg:1970:conjugacy-classes}. By inspecting the descriptions of $|A_{\bar{\bG}}(\bar{u})|$ we see by \cite[p.\ 38]{wall:1963:conjugacy-classes-classical-groups}, (resp.\ \cite[p.\ 42]{wall:1963:conjugacy-classes-classical-groups}), for the symplectic groups, (resp.\ special orthogonal groups), that the number of $\bar{G}$-classes contained in $\bar{\mathcal{O}}^F$ is the same as $|A_{\bar{\bG}}(\bar{u})|$. Note the statement given in \cite{wall:1963:conjugacy-classes-classical-groups} is for the orthogonal groups but the maximal number of $\bar{G}$-classes contained in $\bar{\mathcal{O}}^F$ is $|A_{\bar{\bG}}(\bar{u})|$ and there can only be less classes in the orthogonal group so this suffices. As $A_{\bar{\bG}}(\bar{u})$ is abelian we have $A_{\bar{\bG}}(\bar{u})^F = A_{\bar{\bG}}(\bar{u})$ by \cref{lem:abelian-comp-group}. The odd dimensional special orthogonal groups are adjoint groups of type $\B_n$, so this case is covered.

Let $\bG$ be an adjoint group of type $\C_n$ or $\D_n$ then there exists a symplectic or special orthogonal group $\bar{\bG}$ such that we have an isogeny $\pi : \bar{\bG} \to \bG$ which is defined over $\mathbb{F}_q$. Let $\bar{u}$ be a unipotent element of $\bar{\bG}$ and set $u:=\pi(\bar{u})$. The restriction of $\pi$ to $C_{\bar{\bG}}(\bar{u})$ induces a surjective morphism $A_{\bar{\bG}}(\bar{u}) \to A_{\bG}(u)$ defined over $\mathbb{F}_q$, therefore $A_{\bar{\bG}}(\bar{u})^F = \A_{\bar{\bG}}(\bar{u})$ implies $A_{\bG}(u)^F = A_{\bG}(u)$ so we are done.
\end{proof}

\begin{rem}
The statement of \cref{thm:triv-comp-action-adj} is not a new result. It follows in most cases from a much stronger statement concerning the existence of so called split elements, see \cite[Remark 5.1]{shoji:1987:green-functions-of-reductive-groups}. However, we include this here as it gives a simpler argument for this statement and also circumvents the caveat that split elements do not always exist in type $\E_8$.
\end{rem}

We want to discuss how to relate the component group $A_{\bG}(u)$ to the component group $A_{\btG}(u)$, (for this we follow \cite[p.\ 306]{geck:1996:on-the-average-values}). The group $\btG$ is an almost direct product $\btG = \bG Z(\btG)$ so we get an almost direct product $C_{\btG}(u) = C_{\bG}(u)Z(\btG)$ and the restriction of $\iota$ to the centraliser $C_{\bG}(u)$ induces a surjective map $A_{\bG}(u) \to A_{\btG}(u)$. It is easy to see that the kernel of this map is the image of $Z(\bG)$ in $A_{\bG}(u)$ so we obtain a sequence of maps
\begin{equation}\label{eq:short-exact-alg}
Z(\bG) \to A_{\bG}(u) \to A_{\btG}(u),
\end{equation}
which is exact at $A_{\bG}(u)$. As all morphisms are defined over $\mathbb{F}_q$ this induces a sequence
\begin{equation}\label{eq:short-exact-frob}
Z(\bG)^F \to A_{\bG}(u)^F \to A_{\btG}(u),
\end{equation}
which is again exact at $A_{\bG}(u)^F$, (where here we assume $u$ is chosen such that $A_{\btG}(u)^F = A_{\btG}(u)$). Although the map $A_{\bG}(u) \to A_{\btG}(u)$ in \eqref{eq:short-exact-alg} is surjective it is not necessarily the case that the map $A_{\bG}(u)^F \to A_{\btG}(u)$ in \eqref{eq:short-exact-frob} is surjective. If we can show that this is the case then the existence of well-chosen elements will be assured.

There are two cases where this is trivial to show. If $|A_{\btG}(u)| = 1$ then it is obvious that such a map is surjective. The second case is when $|A_{\bG}(u)| = |A_{\btG}(u)|$ because then we must have $A_{\bG}(u) \cong A_{\btG}(u)$, which implies $A_{\bG}(u)^F \cong A_{\btG}(u)^F$. In fact we are almost always in the situation of these two trivial cases, (see for example the information gathered in \cite[Chapter 2]{taylor:2012:thesis}). We deal with the remaining cases in exceptional groups in two lemmas. In the following lemmas we label the unipotent conjugacy classes as in \cite[\S 13.1]{carter:1993:finite-groups-of-lie-type}.

\begin{lem}\label{lem:comps-E6}
Let $\bG$ be the simply connected group of type $\E_6$ and $\mathcal{O}$ the unipotent class $\E_6(a_3)$. For any $u \in \mathcal{O}^F$ we have $A_{\btG}(u)^F = A_{\btG}(u)$ and $A_{\bG}(u)^F \to A_{\btG}(u)$ is surjective.
\end{lem}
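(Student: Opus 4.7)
The plan is to reduce everything to elementary group theory of groups of order at most $6$. From \cite[\S 13.1]{carter:1993:finite-groups-of-lie-type} one reads that the adjoint-type component group for the class $\E_6(a_3)$ is $\mathfrak{S}_2$; combined with the exact sequence \eqref{eq:short-exact-alg} and the fact that $|A_{\bG}(u)| = 6$ in the simply connected group (available, e.g., from Mizuno's centraliser tables), this identifies $A_{\btG}(u) \cong \mathfrak{S}_2 \cong \mathbb{Z}/2\mathbb{Z}$. Since $\mathbb{Z}/2\mathbb{Z}$ has trivial automorphism group, $F$ must act trivially on $A_{\btG}(u)$ for every representative $u \in \mathcal{O}^F$, which immediately gives the first assertion.

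For the surjectivity claim I would first pin down the isomorphism type of $A_{\bG}(u)$. The exact sequence \eqref{eq:short-exact-alg} identifies the kernel of $A_{\bG}(u) \twoheadrightarrow A_{\btG}(u)$ with the image $Z_{\bG}(u)$ of $Z(\bG) = \mu_3$, and since $|A_{\bG}(u)| = 6$ this image is the full subgroup of order $3$. As $Z_{\bG}(u)$ is central in $A_{\bG}(u)$ but $\mathfrak{S}_3$ has trivial centre, we are forced to conclude $A_{\bG}(u) \cong \mathbb{Z}/6\mathbb{Z}$.

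The cyclic group $\mathbb{Z}/6\mathbb{Z}$ contains a unique subgroup $H$ of order $2$, which is therefore characteristic and hence $F$-stable. Since $H \cong \mathbb{Z}/2\mathbb{Z}$ has no non-trivial automorphism, $F$ acts trivially on $H$, so $H \subseteq A_{\bG}(u)^F$. Finally, the composite $H \hookrightarrow A_{\bG}(u) \twoheadrightarrow A_{\btG}(u)$ has kernel $H \cap Z_{\bG}(u) = \{1\}$ by coprimality and is therefore an isomorphism onto $A_{\btG}(u)$; this already witnesses the desired surjectivity uniformly in $u$. The only real obstacle is confirming $|A_{\bG}(u)| = 6$ in the simply connected group; once this is in hand, the centrality of $Z_{\bG}(u)$ and trivial structure theory of groups of order $6$ do all the remaining work.
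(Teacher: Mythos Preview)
Your proposal is correct and follows essentially the same route as the paper: both arguments identify $A_{\bG}(u)\cong\mathbb{Z}/6\mathbb{Z}$ via the central order-$3$ image of $Z(\bG)$, and then observe that the unique order-$2$ element (equivalently, the unique order-$2$ subgroup) is fixed by $F$ and maps isomorphically onto $A_{\btG}(u)$. The only cosmetic difference is that for the first assertion you invoke $\Aut(\mathbb{Z}/2\mathbb{Z})=\{1\}$ directly, whereas the paper appeals to \cref{thm:triv-comp-action-adj} together with \cref{lem:abelian-comp-group}; your shortcut is perfectly valid here.
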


\begin{proof}
We see that $A_{\bG}(u)$ is isomorphic to $\mathbb{Z}_3 \times \mathbb{Z}_2$, as it is a finite group of order 6 with a non-trivial centre. The group $A_{\btG}(u)$ is abelian and $A_{\btG}(u)^F = A_{\btG}(u)$ by \cref{thm:triv-comp-action-adj} and \cref{lem:abelian-comp-group}. Let $Z(\bG) = \langle z \rangle$ and $x \in A_{\bG}(u)$ be the element of order 2 such that $A_{\bG}(u) = \langle z,x \rangle$. As the action of $F$ on $A_{\btG}(u)$ is trivial we must have $F(x) \in \{x,xz,xz^2\}$. However the element $x$ is of order 2 but $xz$ and $xz^2$ are of order 6, so $F(x) = x$.
\end{proof}

\begin{lem}\label{lem:comps-E7}
Let $\bG$ be the simply connected group of type $\E_7$ and $\mathcal{O}$ the unipotent class $\D_4(a_1)+\A_1$, $\E_7(a_3)$, $\E_7(a_4)$ or $\E_7(a_5)$. Then for some $u \in \mathcal{O}^F$ we have $A_{\btG}(u)^F = A_{\btG}(u)$ and $A_{\bG}(u)^F \to A_{\btG}(u)$ is surjective.
\end{lem}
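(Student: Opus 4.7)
The plan is to proceed class by class, imitating the strategy of \cref{lem:comps-E6}.

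First, for each of the four classes I would produce a representative $u \in \mathcal{O}^F$ with $A_{\btG}(u)^F = A_{\btG}(u)$ by combining \cref{thm:triv-comp-action-adj} with either \cref{lem:abelian-comp-group} (for $\D_4(a_1)+\A_1$, $\E_7(a_3)$, $\E_7(a_4)$, where $A_{\btG}(u) \cong \mathbb{Z}_2$) or \cref{lem:inner-auto-comp-grp} (for $\E_7(a_5)$, where $A_{\btG}(u) \cong \mathfrak{S}_3$, all of whose automorphisms are inner). In each case, because the class is not among the two trivial cases preceding the statement, the exact sequence \eqref{eq:short-exact-alg} gives $|A_{\bG}(u)| = 2|A_{\btG}(u)|$, so $Z_{\bG}(u) = \langle z \rangle$ where $z$ is the unique element of order two in $Z(\bG)$; in particular $F(z) = z$.

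Second, I would analyse the action of $F$ on $A_{\bG}(u)$ lying above the trivial action on $A_{\btG}(u)$: for every $x \in A_{\bG}(u)$ one has $F(x) \in \{x, xz\}$, and whenever $x$ and $xz$ have different orders the element $x$ must be $F$-fixed. This handles lifts of elements of odd order. For $\E_7(a_5)$ it applies to a lift $c$ of a $3$-cycle (since $c$ has order $3$ and $cz$ has order $6$, as in \cref{lem:comps-E6}), reducing the surjectivity claim there to the production of a single $F$-stable lift of one transposition in $\mathfrak{S}_3$.

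The hard part is what remains, namely finding $F$-fixed lifts of elements of order two, for which the order trick fails because $|Z(\bG)|=2$. For the three classes where $A_{\bG}(u)$ is abelian of order $4$ I would invoke \cref{lem:abelian-comp-group}: it suffices to count the number of $G$-conjugacy classes in $\mathcal{O}^F$ (using the explicit enumeration of unipotent classes in the simply connected $\E_7(q)$, for instance the tables of Mizuno or the compilation in \cite[Chapter~2]{taylor:2012:thesis}) and to verify that this count equals $|A_{\bG}(u)| = 4$; \cref{lem:abelian-comp-group} then yields $A_{\bG}(u)^F = A_{\bG}(u)$, so $A_{\bG}(u)^F \to A_{\btG}(u)$ is automatically surjective. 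For $\E_7(a_5)$ I would pick an explicit representative inside a suitable reductive subgroup of $\bG$ (a Levi subsystem of type $\D_4$ is a natural candidate) and compute the centraliser directly, checking that some transposition lift in $A_{\bG}(u)$ is $F$-fixed.

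The main obstacle I anticipate is this last verification: the case-by-case class counts and the explicit centraliser computation for $\E_7(a_5)$ depend on data that is not uniformly recorded in a single source, and, in the $\E_7(a_5)$ case, the precise isomorphism type of the central extension $1 \to \langle z \rangle \to A_{\bG}(u) \to \mathfrak{S}_3 \to 1$ controls whether a transposition lift of order $2$ is available at all.
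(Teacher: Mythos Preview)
For the three classes with abelian $A_{\bG}(u)$ your plan coincides with the paper's: it cites the relevant lemmas of Mizuno to verify that $\mathcal{O}^F$ splits into $|A_{\bG}(u)|$ many $G$-classes and then applies \cref{lem:abelian-comp-group}.

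Where you diverge is $\E_7(a_5)$. The paper avoids any direct centraliser computation and instead pushes the same counting idea through. The two pieces of input you flagged as missing both come from Mizuno: his Table~9 gives $A_{\bG}(u) \cong \mathfrak{S}_3 \times \mathbb{Z}_2$ (so the extension is split, resolving your stated uncertainty), and his Lemma~21(2) gives that $\mathcal{O}^F$ contains exactly six $G$-classes, i.e.\ as many $F$-conjugacy classes in $A_{\bG}(u)$ as there are ordinary conjugacy classes. Since $F$ fixes $z$, fixes every $3$-cycle by your order argument, and projects to the identity on $A_{\btG}(u)\cong\mathfrak{S}_3$, the only non-trivial possibility is that $F$ sends each $2$-cycle $t$ to $tz$; but this automorphism of $\mathfrak{S}_3 \times \mathbb{Z}_2$ has strictly fewer than six $F$-conjugacy classes (already $1$ and $z$ become $F$-conjugate), contradicting Mizuno's count. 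Hence $F$ acts trivially on all of $A_{\bG}(u)$ and the surjectivity is immediate.

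So the obstacle you anticipated dissolves: no explicit representative in a subsystem subgroup is needed, and $\E_7(a_5)$ is handled by exactly the same Mizuno-plus-counting mechanism as the abelian cases, only with ``number of conjugacy classes'' in place of ``order''. As an aside, your suggested $\D_4$ Levi would not contain a representative of this class in any case, since $\E_7(a_5)$ is distinguished in $\E_7$ (Mizuno label $\D_6(a_2)+\A_1$).
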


\begin{proof}
Let $u \in \mathcal{O}^F$ be an element such that its image $u \in \mathcal{\tilde{O}}^F$ has the property $A_{\btG}(u)^F = A_{\btG}(u)$, which is possible because of \cref{thm:triv-comp-action-adj}. We use the work of Mizuno \cite{mizuno:1980:conjugacy-classes-of-E7-and-E8} to confirm that $F$ only acts non-trivially on $Z_{\bG}(u)$. By
\begin{enumerate}
	\item \cite[Lemma 30]{mizuno:1980:conjugacy-classes-of-E7-and-E8} for $\D_4(a_1)+\A_1$ -- Mizuno label $\D_4(a_1)+\A_1$,
	\item \cite[Lemma 13]{mizuno:1980:conjugacy-classes-of-E7-and-E8} for $\E_7(a_3)$ -- Mizuno label $\D_6+\A_1$,
	\item \cite[Lemma 17]{mizuno:1980:conjugacy-classes-of-E7-and-E8} for $\E_7(a_4)$ -- Mizuno label $\D_6(a_1)+\A_1$,
	\item and \cite[Lemma 21(2)]{mizuno:1980:conjugacy-classes-of-E7-and-E8} for $\E_7(a_5)$ -- Mizuno label $\D_6(a_2)+\A_1$,
\end{enumerate}
we know the number of $G$-classes contained in $\mathcal{O}^F$ is $|A_{\bG}(u)|$. If $\mathcal{O}$ is not $\E_7(a_5)$ then the group $A_{\bG}(u)$ is abelian so the result follows by \cref{lem:abelian-comp-group}.

Assume $\mathcal{O}$ is the class $\E_7(a_5)$, by \cite[Table 9]{mizuno:1980:conjugacy-classes-of-E7-and-E8} we see $A_{\bG}(u)$ is isomorphic to $\mathfrak{S}_3 \times \mathbb{Z}_2$. Let $Z(\bG) = \langle z \rangle$, then $z$ is a generator for the $\mathbb{Z}_2$ component of $A_{\bG}(u)$. We know the action of $F$ on the component group $A_{\btG}(u)$ is trivial so we need only show $F(x) \neq xz$ for all $x \in A_{\bG}(u) - Z_{\bG}(u)$. If $x$ is a 3-cycle of $A_{\bG}(u)$ then we cannot have $F(x) = xz$ because $x$ and $xz$ are of different orders. If $x$ is a 2-cycle such that $F(x) = xz$ then the number of $F$-conjugacy classes of $A_{\bG}(u)$ would be less than 6 and this cannot possibly happen.
\end{proof}

We now deal with the cases of classical type. Let $\bG$ be a spin group, (in other words a simply connected group of type $\B_n$ or $\D_n$), and let $\mathcal{O}$ be a unipotent class of $\bG$. Recall that to every unipotent class of $\bG$ we have a corresponding partition $\lambda$ of $N$, where $N$ is the dimension of the spin group, such that any even number in $\lambda$ occurs an even number of times. This partition is determined by the Jordan blocks of a corresponding class in a special orthogonal group. The non-trivial cases we need to consider occur when the partition $\lambda$ contains at least one odd number and any odd number occurs at most once, (equivalently these are the classes such that $Z(\bG)$ embeds into $A_{\bG}(u)$). We assume now that $\mathcal{O}$ is a class with such a partition. In \cite[\S 14.3]{lusztig:1984:intersection-cohomology-complexes} Lusztig gives a description of the component group $A_{\bG}(u)$, which we recall here.

First of all let $\bar{\bG}$ be a special orthogonal group such that we have an isogeny $\pi : \bG \to \bar{\bG}$. Recall that the kernel of this isogeny is a central subgroup of order 2, which we denote $\Ker(\pi) = \{1,\vartheta\}$. Every non-trivial element of $Z(\bG)$ determines a non-trivial element of $A_{\bG}(u)$, in particular $\vartheta$ determines a non-trivial element of $A_{\bG}(u)$. Let $I := \{a_1,\dots,a_k\}$ be the set of odd numbers occuring in $\lambda$. We let $S$ be the group generated by the elements $\vartheta,x_1,\dots,x_k$, which satisfy the relations
\begin{equation*}
\vartheta^2=1\qquad x_i^2 = \vartheta^{a_i(a_i-1)/2} \qquad x_ix_j = x_jx_i\vartheta\qquad\vartheta x_i = x_i\vartheta
\end{equation*}
for all $i \neq j$. The group $A_{\bG}(u)$ is then isomorphic to the subgroup of $S$ which consists of all elements that can be expressed as a word in an even number of the generators $x_1,\dots,x_k$. For the remainder of this discussion we will identify $A_{\bG}(u)$ with its image in $S$. For all $2 \leqslant i \leqslant k$ let $y_i = x_1x_i$ then $\{\vartheta,y_2,\dots,y_k\}$ is a set of generators for $A_{\bG}(u)$.

Consider the image $\bar{u} = \pi(u)$ of $u$ in the special orthogonal group $\bar{\bG}$. The map $\pi$ induces a surjective map $A_{\bG}(u) \to A_{\bar{\bG}}(\bar{u})$ with kernel $\{1,\vartheta\}$. By \cref{thm:triv-comp-action-adj} and \cref{lem:abelian-comp-group} we know the action of $F$ on $A_{\bar{\bG}}(\bar{u})$ must be trivial for any class representative $u \in \mathcal{O}^F$. Therefore given any element $x \in A_{\bG}(u)$ we have $F(x) \in  \{x,x\vartheta\}$, however we always have $F(\vartheta) = \vartheta$. Any automorphism of $A_{\bG}(u)$ is uniquely determined by its action on the generators, so we break the study of $F$ into two possible cases:

\begin{enumerate}[label=(\alph*)]
	\item $F(y_i) = y_i\vartheta$ for an even number of $y_i$'s,\label{auto:a}
	\item $F(y_i) = y_i\vartheta$ for an odd number of $y_i$'s.\label{auto:b}
\end{enumerate}

In case \ref{auto:a} we claim that the automorphism is an inner automorphism. We start by noticing that if $i_1$, $i_2$, $j_1$, $j_2$ are all distinct then $x_{i_1}x_{i_2}$ and $x_{j_1}x_{j_2}$ commute. Let $\sigma_{ij}$ be the automorphism such that $\sigma_{ij}(y_i) = y_i\vartheta$ and $\sigma_{ij}(y_j) = y_j\vartheta$ but $\sigma_{ij}(y_{\ell}) = y_{\ell}$ whenever $i,j,\ell$ are all distinct. It is easy to verify that $\sigma_{ij}$ is conjugation by $x_ix_j$, so is inner. As $F$ must be a composition of automorphisms of the form $\sigma_{ij}$ we have $F$ is also inner.

We now consider case \ref{auto:b}. Assume $\bG$ is of type $\D_n$ and by composing with a sufficient number of inner automorphisms let us assume that $F$ acts non-trivially on precisely one generator $y_i$. As $F$ acts non-trivially only on $y_i$ it will be true that $A_{\bG}(u)^F$ is the subgroup $\langle \vartheta, y_2,\dots,y_{i-1},y_{i+1},\dots,y_k\rangle$, in particular $|A_{\bG}(u)^F| = 2^{|I|-1}$. As $\bG$ is of type $\D_n$ we have
\begin{equation}\label{eq:auto-odd-gens-swap}
|A_{\bG}(u)^F|/|A_{\btG}(u)| = 2^{|I|-1}/2^{|I|-2} = 2.
\end{equation}
We claim that we also have $|Z_{\bG}(u)^F| = 2$. By the third paragraph of \cite[\S 3.7(a)]{lusztig:2008:irreducible-representations-of-finite-spin-groups} we know the centre of $A_{\bG}(u)$ is given by $\{1,\vartheta,z,z\vartheta\}$ where $z$ is the element $x_1x_2\cdots x_{k-1}x_k$. Note that this means the centre of $A_{\bG}(u)$ coincides with the image of $Z(\bG)$. We have the following expression of $z$ in terms of our chosen generating set
\begin{equation*}
z = x_1x_2\cdots x_k = y_2y_3\cdots y_k\vartheta^c
\end{equation*}
for some $c \in \{0,1\}$. Therefore we have $F(z) = z\vartheta$ and $F(z\vartheta) = z$ so $Z_{\bG}(u)^F = \{1,\vartheta\}$ as required.

Assume now $\bG$ is of type $\B_n$. Recall that $\lambda$ partitions an odd number so $k-1$ is even. By composing with a sufficient number of inner automorphisms let us assume that $F$ acts on all but one generator non-trivially. Assume $y_i$ is the generator such that $F(y_i) = y_i$ then it is easy to check that $y_j^{x_1x_i} = y_j\theta$ whenever $j \neq i$ and clearly $y_i^{x_1x_i} = y_i$. In particular $F$ is an inner automorphism. With this in hand we can prove the following proposition, which is vital for extending \cref{thm:lusztig-hezard}.

\begin{prop}\label{prop:order-comp-grp}
Assume $\bG$ is a simple algebraic group then any $F$-stable unipotent class $\mathcal{O}$ of $\bG$ contains a well-chosen representative.
\end{prop}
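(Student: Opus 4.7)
The plan is to collect the case analysis already carried out in this section. First I would observe that \eqref{eq:short-exact-alg} is in fact a short exact sequence
\[
1 \to Z_{\bG}(u) \to A_{\bG}(u) \to A_{\btG}(u) \to 1,
\]
and after taking $F$-fixed points we obtain left-exactness, so
\[
|A_{\bG}(u)^F| \;=\; |Z_{\bG}(u)^F| \cdot \bigl|\mathrm{Im}(A_{\bG}(u)^F \to A_{\btG}(u))\bigr|.
\]
Hence $u \in \mathcal{O}^F$ is well chosen if and only if the induced map $A_{\bG}(u)^F \to A_{\btG}(u)$ is surjective; surjectivity also forces $A_{\btG}(u)^F = A_{\btG}(u)$. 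The task therefore reduces to exhibiting such a representative in every class $\mathcal{O}$.

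Next I would dispose of the easy situations. If $Z(\bG)$ is connected then $\btG = \bG$, $Z_{\bG}(u)$ is trivial, and the claim collapses to the existence of some $u$ with $A_{\bG}(u)^F = A_{\bG}(u)$, supplied by \cref{thm:triv-comp-action-adj}; this already covers $\bG$ adjoint and every class in type $\A_n$. More generally, whenever either $A_{\btG}(u)$ is trivial or the natural surjection $A_{\bG}(u) \to A_{\btG}(u)$ happens to be an isomorphism, the desired surjectivity follows formally once one picks $u$ such that $A_{\btG}(u)^F = A_{\btG}(u)$, using \cref{thm:triv-comp-action-adj} applied to $\btG$ (whose centre is connected and whose quotient $\btG/Z(\btG) \cong \bG/Z(\bG)$ is simple). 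Consulting the tabulated component groups, this dispatches every class in every simple type except the class $\E_6(a_3)$ in $\E_6$, the four classes $\D_4(a_1)+\A_1, \E_7(a_3), \E_7(a_4), \E_7(a_5)$ in $\E_7$, and certain classes in spin groups of types $\B_n$ and $\D_n$.

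The outstanding exceptional classes are exactly those settled by \cref{lem:comps-E6} and \cref{lem:comps-E7}. For the remaining spin case, that is, $\bG$ simply connected of type $\B_n$ or $\D_n$ together with a partition $\lambda$ containing distinct odd parts (the only partitions for which $Z(\bG)$ embeds non-trivially into $A_{\bG}(u)$), the pre-proposition discussion divides the induced action of $F$ on $A_{\bG}(u)$ into the mutually exclusive cases (a) and (b). In case (a) the action is inner, so \cref{lem:inner-auto-comp-grp} supplies a representative on which $F$ acts trivially on $A_{\bG}(u)$, making surjectivity immediate; parity considerations ($k-1$ even) show that only case (a) can arise in type $\B_n$, so all spin $\B_n$ classes are handled.

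The only remaining piece, and the one I expect to require the most care, is case (b) in type $\D_n$. Here I would combine two explicit computations performed above: \eqref{eq:auto-odd-gens-swap} gives $|A_{\bG}(u)^F|/|A_{\btG}(u)| = 2$, while the expression $z = y_2 y_3 \cdots y_k \vartheta^c$ of the extra central generator of $A_{\bG}(u)$, together with the hypothesis that $F$ sends $y_i \mapsto y_i \vartheta$ on an odd number of generators, shows $F(z) = z\vartheta$ and consequently $|Z_{\bG}(u)^F| = 2$. Matching the two computations yields $|A_{\bG}(u)^F| = |Z_{\bG}(u)^F| \cdot |A_{\btG}(u)|$, so $u$ is well chosen. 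The main delicacy in this last step is to confirm that the parity of the number of generators on which $F$ acts non-trivially is independent of the choice of initial representative (so that after reduction by inner automorphisms exactly one generator behaves non-trivially) and that the ambiguous exponent $c \in \{0,1\}$ in the formula for $z$ never interferes with the conclusion $F(z) = z\vartheta$; both points are implicit in the generator-and-relation description recalled before the proposition.
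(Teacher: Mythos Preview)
Your overall architecture matches the paper's, but the claim that the two ``trivial'' situations (either $A_{\btG}(u)=1$ or $A_{\bG}(u)\to A_{\btG}(u)$ an isomorphism) dispatch \emph{every} class outside the listed exceptional and spin cases is false, and this leaves genuine gaps.

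For $\bG=\Sp_{2n}$ or $\bG=\SO_{2n}$ (both simple with disconnected centre) there are many classes where $Z(\bG)$ embeds non-trivially into $A_{\bG}(u)$ while $A_{\btG}(u)$ is still non-trivial. For instance, in $\Sp_{2n}$ with $n=1+2+\cdots+k$ and $k\geqslant 2$, the class with partition $(2,4,\dots,2k)$ has $|A_{\bG}(u)|=2^k$ and $|A_{\btG}(u)|=2^{k-1}$, so neither of your trivial criteria applies; analogous classes occur in $\SO_{2n}$. The paper handles these groups not via the trivial dichotomy but by invoking directly the argument inside the proof of \cref{thm:triv-comp-action-adj}: for symplectic and special orthogonal groups \cref{lem:abelian-comp-group} combined with Wall's class count gives $A_{\bG}(u)^F=A_{\bG}(u)$ for \emph{every} $u\in\mathcal{O}^F$, whence surjectivity of $A_{\bG}(u)^F\to A_{\btG}(u)$ is automatic from surjectivity at the algebraic level.

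You have also omitted the half-spin groups entirely. These are simple of type $\D_n$ with $|Z(\bG)|=2$, they are neither spin nor special orthogonal, and your spin discussion (which takes $\bG$ simply connected) does not apply to them. The paper treats this case by passing to the simply connected cover $\pi:\bG_{\simc}\to\bG$ and observing that the Frobenius on $\bG_{\simc}$ must preserve $\Ker(\pi)$, hence must fix every element of $Z(\bG_{\simc})$; the computation $F(z)=z\vartheta$ in case~(b) then shows that case~(b) cannot occur, so the action on $A_{\bG_{\simc}}(u)$ is always inner and \cref{lem:inner-auto-comp-grp} finishes the argument. Your treatment of the spin $\D_n$ case~(b) itself is fine and agrees with the paper.
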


\begin{proof}
If $\bG$ is adjoint then the existence of well-chosen representatives follows from \cref{thm:triv-comp-action-adj}, hence we may assume $Z(\bG)$ is disconnected. We have already covered the two trivial cases where $|A_{\bG}(u)| = |A_{\btG}(u)|$ or $|A_{\btG}(u)| = 1$, which covers the case of type $\A_n$. If $\bG$ is an exceptional group then the only cases left to consider are those covered by Lemmas \cref{lem:comps-E6} and \cref{lem:comps-E7}. Assume $\bG$ is a symplectic group or a special orthogonal group of type $\D_n$ then the statement was proved as part of the proof of \cref{thm:triv-comp-action-adj}.

If $\bG$ is a spin group then the result follows from the above discussion and \cref{lem:inner-auto-comp-grp}. Finally assume $\bG$ is a half spin group and $\pi : \bG_{\simc} \to \bG$ is a simply connected cover of $\bG$. We need only comment that case \ref{auto:b} above cannot happen because the Frobenius cannot act non-trivially on the centre of $\bG_{\simc}$ as it must preserve $\Ker(\pi)$. In other words the map $A_{\bG}(u)^F \to A_{\btG}(u)$ is always surjective.
\end{proof}

\section{The Main Result}\label{sec:main-result}
Let $\overline{\mathbb{Q}}_{\ell}$ be an algebraic closure of the field of $\ell$-adic numbers where $\ell>0$ is a prime distinct from $p$. We assume fixed once and for all an automorphism $\overline{\mathbb{Q}}_{\ell} \to \overline{\mathbb{Q}}_{\ell}$ of order two denoted by $x \mapsto \overline{x}$ such that $\overline{\omega} = \omega^{-1}$ for all roots of unity $\omega \in \overline{\mathbb{Q}}_{\ell}^{\times}$. If $H$ is a finite group we denote by $\Cent(H)$ the space of all class functions $f: H \to \overline{\mathbb{Q}}_{\ell}$. This is an inner product space with respect to the standard inner product $\langle - , - \rangle_H : \Cent(H) \times \Cent(H) \to \overline{\mathbb{Q}}_{\ell}$ given by
\begin{equation*}
\langle f,f' \rangle_H = \frac{1}{|H|} \sum_{h \in H}f(h)\overline{f'(h)},
\end{equation*}
(we will write $\langle -,-\rangle$ when $H$ is clear from the context). We have a set of irreducible characters $\Irr(H) \subset \Cent(H)$ which forms an orthonormal basis of $\Cent(H)$. If $K$ is a subgroup of $H$ then we denote by $\Res^H_K : \Cent(H) \to \Cent(K)$ and $\Ind_K^H : \Cent(K) \to \Cent(H)$ the standard restriction and induction maps.

From now on representatives of $F$-stable unipotent classes will be assumed well chosen. Before we can state and prove the main result of this paper we must first recall some facts regarding the character theory of $G$. Let $s \in G^{\star}$ be a semisimple element then we denote by $\mathcal{E}(G,s)$ the \emph{rational} Lustzig series determined by the $G^{\star}$-conjugacy class of $s$. We denote by $C_{G^{\star}}(s)^{\circ}$ the fixed point group ${C_{\bG^{\star}}(s)^{\circ}}^{F^{\star}}$ then this forms a normal subgroup of the full centraliser $C_{G^{\star}}(s)$. As such we have the quotient group $A_{G^{\star}}(s) := C_{G^{\star}}(s)/C_{G^{\star}}(s)^{\circ}$ acts on $\Irr(C_{G^{\star}}(s)^{\circ})$ by conjugation. This action stabilises each Lusztig series and we let $\mathcal{E}(C_{G^{\star}}(s)^{\circ},1)/A_{G^{\star}}(s)$ denote the orbits of the unipotent characters under the action of $A_{G^{\star}}(s)$.

We remark also that as $G$ is a normal subgroup of $\tilde{G}$ we have a natural action of $\tilde{G}/G\cdot Z(\tilde{G})$ on $\Irr(G)$ by conjugation. This action stabilises each Lusztig series of $G$. In \cite[Proposition 5.1]{lusztig:1988:reductive-groups-with-a-disconnected-centre} Lusztig shows that there exists a bijection
\begin{equation}\label{jordan-decomp}
\Psi_s : \mathcal{E}(G,s) \to \mathcal{E}(C_{G^{\star}}(s)^{\circ},1)/A_{G^{\star}}(s)
\end{equation}
such that:
\begin{enumerate}
\renewcommand{\theenumi}{(\roman{enumi})}
	\item the fibres of $\Psi_s$ are the orbits of the action of $\tilde{G}/G\cdot Z(\tilde{G})$ on $\mathcal{E}(G,s)$;
	\item if $\Theta \in \mathcal{E}(C_{G^{\star}}(s)^{\circ},1)/A_{G^{\star}}(s)$ is an orbit and $\Gamma$ is the stabiliser of an element of $\Theta$ in $A_{G^{\star}}(s)$ then $|\Psi_s^{-1}(\Theta)| = |\Gamma|$.
\end{enumerate}
This is a generalisation of the Jordan decomposition of characters. To prove the above result Lusztig shows that the restriction of every irreducible character of $\tilde{G}$ to $G$ is multiplicity free. With this framework in mind we can now state and prove the main result of this article.

\begin{thm}\label{thm:A}
Assume $\bG$ is a simple algebraic group and $\mathcal{O}$ is an $F$-stable unipotent class of $\bG$. There exists a character $\chi \in \Irr(G)$ such that $\Phi_G(\chi) = \mathcal{O}$ and $n_{\chi} = |A_{\bG}(u)^F| = |Z_{\bG}(u)^F||A_{\btG}(u)|$, where $u \in \mathcal{O}^F$ is a well-chosen representative.
\end{thm}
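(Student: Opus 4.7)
My plan is to lift the Hézard--Lusztig theorem from $\btG$ to $\bG$ via restriction of characters along the regular embedding $\iota$, using the Jordan decomposition bijection \eqref{jordan-decomp} to control degrees. Let $u \in \mathcal{O}^F$ be a well-chosen representative and let $\tilde{\mathcal{O}}$ be the corresponding $F$-stable unipotent class of $\btG$. Since $Z(\btG)$ is connected and $\btG/Z(\btG) \cong \bG/Z(\bG)$ is simple, \cref{thm:lusztig-hezard} applied to $\btG$ furnishes a character $\tilde{\chi} \in \Irr(\tilde{G})$ satisfying $\Phi_{\tilde{G}}(\tilde{\chi}) = \tilde{\mathcal{O}}$ and $n_{\tilde{\chi}} = |A_{\btG}(u)|$.

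Next, by Lusztig's multiplicity-freeness of $\Res^{\tilde{G}}_G$ (used in the proof of \eqref{jordan-decomp}), I would write $\Res^{\tilde{G}}_G \tilde{\chi} = \chi_1 + \cdots + \chi_r$ with pairwise distinct irreducible constituents; by Clifford theory applied to the abelian quotient $\tilde{G}/G$, these are permuted transitively and hence share a common degree, so $\chi_i(1) = \tilde{\chi}(1)/r$. Taking $\chi := \chi_1$, I would verify that $\Phi_G(\chi) = \mathcal{O}$: this is a compatibility of unipotent supports with the regular embedding, which can be read off by comparing the class function $\tilde{\chi}$ on unipotent elements (identified via $\iota$) with its restriction. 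For the counting, part~(ii) of \eqref{jordan-decomp} applied to the series containing $\tilde{\chi}$ and $\chi$ identifies $r = |\Gamma|$, where $\Gamma$ is the stabiliser in $A_{G^{\star}}(s)$ of the unipotent character corresponding to $\tilde{\chi}$ under Jordan decomposition (here $s = \iota^{\star}(\tilde{s})$ with $\tilde{\chi} \in \mathcal{E}(\tilde{G},\tilde{s})$).

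The crux is the identity $|\Gamma| = |Z_{\bG}(u)^F|$. I would approach this by exploiting the fact that $A_{G^{\star}}(s)$ is in duality with a quotient of $Z(\bG)^F$; concretely, for the specific Hézard--Lusztig character $\tilde{\chi}$ whose unipotent support is $\tilde{\mathcal{O}}$, the stabiliser $\Gamma$ of the associated unipotent character in the disconnected centraliser should be the full image of $Z(\bG)^F$ under the natural map into $A_{G^{\star}}(s)$. Combined with the well-chosen hypothesis, one expects this image to have order exactly $|Z_{\bG}(u)^F|$, the latter being the image of $Z(\bG)^F$ in $A_{\bG}(u)^F$ on the dual side of the exact sequence \eqref{eq:short-exact-frob}. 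This is precisely the step where the well-chosen property is indispensable, and I expect this to be the main obstacle, requiring a careful inspection of how Jordan decomposition interacts with the regular embedding for Hézard--Lusztig characters.

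Granted the identity $r = |Z_{\bG}(u)^F|$, the theorem follows by a divisibility argument. Since $n_{\tilde{\chi}} \cdot \tilde{\chi}(1) = |A_{\btG}(u)| \cdot r \cdot \chi(1)$ is a polynomial in $q$ with integer coefficients, we obtain
\begin{equation*}
n_\chi \;\mid\; |A_{\btG}(u)| \cdot |Z_{\bG}(u)^F| \;=\; |A_{\bG}(u)^F|,
\end{equation*}
using the well-chosen condition. For the reverse divisibility, I would invoke Lusztig's general lower bound relating $n_\chi$ to the component group $A_{\bG}(u)^F$ at any $u$ in the unipotent support (i.e.\ the mechanism that makes $n_{\tilde\chi}=|A_{\btG}(u)|$ maximal in the connected-centre setting applies in the disconnected-centre setting with $|A_{\btG}(u)|$ replaced by $|A_{\bG}(u)^F|$). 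Combining both divisibilities yields $n_\chi = |A_{\bG}(u)^F|$, as required.
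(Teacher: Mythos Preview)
Your overall architecture---apply \cref{thm:lusztig-hezard} in $\btG$, restrict to $G$, and count irreducible constituents via the Jordan decomposition \eqref{jordan-decomp}---is exactly the paper's strategy. The difference, and the genuine gap, lies in how you handle the ``crux'' $|\Gamma| = |Z_{\bG}(u)^F|$. The paper does not deduce this from structural duality considerations. Instead it isolates the required input as three explicit properties \ref{P1}--\ref{P3} of a pair $(\tilde{s},\psi)$, with \ref{P2} being precisely your identity $|\Stab_{A_{G^{\star}}(s)}(\psi)| = |Z_{\bG}(u)^F|$, and then defers the existence of such a pair to a separate case-by-case verification (\cref{prop:A}, carried out in a companion paper). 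Your expectation that the stabiliser of the Jordan correspondent of an arbitrary H\'ezard--Lusztig character $\tilde{\chi}$ is automatically the full image of $Z(\bG)^F$ is not justified: \cref{thm:lusztig-hezard} only asserts existence, and different choices of $\tilde{\chi}$ (equivalently, of the semisimple parameter $\tilde{s}$ and the unipotent character $\psi$) can yield different stabilisers. One must \emph{choose} $(\tilde{s},\psi)$ so that \ref{P2} holds simultaneously with \ref{P1} and \ref{P3}, and this choice is the substantial case analysis that your duality heuristic does not supply.

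A second, smaller issue: your final ``reverse divisibility'' step invokes a general lower bound $|A_{\bG}(u)^F| \mid n_\chi$ that does not exist as a theorem. Once $r = |Z_{\bG}(u)^F|$ is known exactly, no divisibility argument is needed: from $\chi(1) = \tilde{\psi}(1)/r$ the paper reads off $n_{\chi} = r \cdot n_{\tilde{\psi}} = |Z_{\bG}(u)^F|\cdot|A_{\btG}(u)|$ directly (using that $n_{\tilde{\psi}} = n_{\psi}$ because the semisimple character degree is a monic integral polynomial in $q$), and the compatibility $\Phi_G(\chi) = \mathcal{O}$ is quoted from \cite[Theorem 3.7]{geck-malle:2000:existence-of-a-unipotent-support} rather than argued via class functions.
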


\begin{proof}
If $\bG$ is an adjoint group then this is merely the statement of \cref{thm:lusztig-hezard}, so we may assume $Z(\bG)$ is disconnected. We write $(\tilde{s},\psi)$ for a pair such that $\tilde{s} \in \tilde{G}^{\star}$ is a semisimple element, ($s:=\iota^{\star}(\tilde{s})\in G^{\star}$), and $\psi \in \mathcal{E}({C_{G^{\star}}(s)^{\circ}},1)$ is a unipotent character. We denote by $\tilde{\psi} \in \mathcal{E}(\tilde{G},\tilde{s})$ the character uniquely determined by the sequence of bijections
\begin{equation}\label{eq:bijection}
\mathcal{E}(\tilde{G},\tilde{s}) \to \mathcal{E}(C_{\tilde{G}^{\star}}(\tilde{s}),1) \to \mathcal{E}({C_{G^{\star}}(s)^{\circ}},1),
\end{equation}
where the first bijection comes from the usual Jordan decomposition of characters and the last bijection comes from \cite[Proposition 13.20]{digne-michel:1991:representations-of-finite-groups-of-lie-type}.

Let us assume that $(\tilde{s},\psi)$ is a pair which satisfies the following properties:
\begin{enumerate}[label=(P\arabic*)]
	\item $n_{\psi} = |A_{\tilde{\bG}}(u)|$.\label{P1}
	\item $|\Stab_{A_{G^{\star}}(s)}(\psi)| = |Z_{\bG}(u)^F|$.\label{P2}
	\item $\Phi_{\tilde{G}}(\tilde{\psi}) = \mathcal{O}$, where here we identify $\mathcal{O}$ with its image $\iota(\mathcal{O})$ in $\btG$.\label{P3}
\end{enumerate}
By the Jordan decomposition of characters the character degree of $\psi$ satisfies $\tilde{\psi}(1) = \psi_{\simc}(1)\psi(1)$, where $\psi_{\simc} \in \mathcal{E}(\tilde{G},\tilde{s})$ is the unique semisimple character contained in the Lusztig series. By \cite[Theorem 8.4.8]{carter:1993:finite-groups-of-lie-type} we have $|\tilde{G}|_{p'} = |C_{\tilde{G}}(\tilde{s})|_{p'}\psi_{\simc}(1)$ and by the order formula for finite reductive groups, (see \cite[pg.\ 75]{carter:1993:finite-groups-of-lie-type}), both $|\tilde{G}|_{p'}$ and $|C_{\tilde{G}}(\tilde{s})|_{p'}$ are monic polynomials in $q$. In particular $\psi_{\simc}(1)$ must also be a monic polynomial in $q$ hence $n_{\tilde{\psi}}=n_{\psi} = |A_{\btG}(u)|$ by \ref{P1}. Using the multiplicity free property mentioned above we have $\Res^{\tilde{G}}_G(\tilde{\psi}) = \chi_1 + \cdots + \chi_k$, where $\chi_i \in \mathcal{E}(G,s)$ and $\chi_i(1) = |\Stab_{A_{G^{\star}}(s)}(\psi)|^{-1}\tilde{\psi}(1)$. For any $1 \leqslant i \leqslant k$ we have by \ref{P2} that
\begin{equation*}
n_{\chi_i} = |\Stab_{A_{G^{\star}}(s)}(\psi)|\cdot n_{\tilde{\psi}} = |Z_{\bG}(u)^F||A_{\tilde{\bG}}(u)| = |A_{\bG}(u)|.
\end{equation*}
By \ref{P3} and \cite[Theorem 3.7]{geck-malle:2000:existence-of-a-unipotent-support} we know $\mathcal{O}$ is the unipotent support of each $\chi_i$ hence any $\chi_i$ will satisfy the statement of the theorem.
\end{proof}

Our proof of \cref{thm:A} will be complete once we have verified the following proposition.

\begin{prop}\label{prop:A}
For every simple algebraic group $\bG$ with a disconnected centre and every $F$-stable unipotent class $\mathcal{O}$ of $\bG$ there exists a pair $(\tilde{s},\psi)$, (as specified in the proof of \cref{thm:A}), satisfying \ref{P1} to \ref{P3}. Furthemore $\tilde{s}$ can be chosen such that the image of $s$ under an adjoint quotient of $\bG^{\star}$ is a quasi-isolated semisimple element. Let $\mathcal{F} \subseteq \mathcal{E}(C_{G^{\star}}(s)^{\circ},1)$ be the family of unipotent characters containing $\psi$ then the following condition holds, unless $\bG$ is a spin / half spin group and $A_{\bG}(u)$ is non-abelian:
\begin{equation}\label{eq:clubsuit}
\{\chi \in \mathcal{F} \mid |\Stab_{A_{G^{\star}}(s)^{F^{\star}}}(\chi)| \neq |Z_{\bG}(u)^F|\} = \emptyset.\tag{$\clubsuit$}
\end{equation}
\end{prop}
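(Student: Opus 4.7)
The plan is to apply \cref{thm:lusztig-hezard} inside $\btG$, whose centre is connected, to produce a starting pair $(\tilde{s},\psi)$ that automatically fulfils \ref{P1} and \ref{P3}, and then verify the stabiliser condition \ref{P2} by a case-by-case analysis on the isogeny type of $\bG$. Throughout I use the short exact sequence \eqref{eq:short-exact-frob} together with the well-chosenness of $u$, which together give $|A_{\bG}(u)^F| = |Z_{\bG}(u)^F|\cdot|A_{\btG}(u)|$, so that \ref{P2} is equivalent to the assertion $|\Stab_{A_{G^{\star}}(s)}(\psi)| = |A_{\bG}(u)^F|/|A_{\btG}(u)|$.

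For the starting pair, \cref{thm:lusztig-hezard} applied to $\btG$ at the class $\iota(\mathcal{O})$ gives a character $\tilde{\psi} \in \Irr(\tilde{G})$ with unipotent support $\iota(\mathcal{O})$ and $n_{\tilde{\psi}} = |A_{\btG}(u)|$; reading off the pair $(\tilde{s},\psi)$ through the Jordan-decomposition bijections of \eqref{eq:bijection} yields \ref{P1} and \ref{P3} by construction. The H\'{e}zard--Lusztig recipe produces $\tilde{s}$ from a quasi-isolated semisimple element of the adjoint quotient of $\btG^{\star}$; since the adjoint quotients of $\bG^{\star}$ and $\btG^{\star}$ are canonically isomorphic and $\iota^{\star}$ is compatible with the two projections, the image of $s := \iota^{\star}(\tilde{s})$ in the adjoint quotient of $\bG^{\star}$ is also quasi-isolated, giving the asserted genuinely-quasi-isolated property.

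The bulk of the work is the case-by-case verification of \ref{P2}, and of \eqref{eq:clubsuit} where claimed. In type $\A_n$ all component groups $A_{\bG}(u)$ are trivial and both sides of \ref{P2} equal one. For simply connected symplectic groups and orthogonal groups of type $\D_n$, and for the classes in $\E_6$ and $\E_7$ highlighted in \cref{lem:comps-E6} and \cref{lem:comps-E7}, the Jordan-decomposition pair $(\tilde{s},\psi)$ produced by the H\'{e}zard--Lusztig recipe determines a Lusztig family $\mathcal{F}$ on the dual side whose structure and $A_{G^{\star}}(s)$-action can be described explicitly using the tables in \cite{lusztig:1984:characters-of-reductive-groups}. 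From this one computes the stabiliser order directly and matches it against $|Z_{\bG}(u)^F|$; since in these cases $A_{G^{\star}}(s)$ acts with orbits of uniform size on the entire family, \eqref{eq:clubsuit} is a bonus consequence.

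The hard part will be the spin and half-spin cases in which $A_{\bG}(u)$ is non-abelian: there the dual family $\mathcal{F}$ arises from Lusztig's construction based on the non-abelian $2$-group structure encoded in the component-group description of \cref{sec:comp-groups}, and the orbits of $A_{G^{\star}}(s)$ on $\mathcal{F}$ have different sizes, so \eqref{eq:clubsuit} genuinely fails and must be excluded. One still establishes \ref{P2} in these cases by selecting $\psi$ inside the orbit with the correct stabiliser order, whose existence follows from the explicit description of $A_{\bG}(u)$ and $A_{\bG}(u)^F$ in \cref{sec:comp-groups} combined with the duality between component-group structures at corresponding unipotent and quasi-isolated semisimple elements.
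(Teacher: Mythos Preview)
Your outline is consistent with what the paper does: the paper does \emph{not} prove \cref{prop:A} here at all, but defers the entire case-by-case verification to a companion paper and the author's thesis, noting only that the techniques are those of \cite[\S 2]{geck-hezard:2008:unipotent-support} together with Clifford theory. So your plan --- start from \cref{thm:lusztig-hezard} applied to $\btG$ to obtain \ref{P1} and \ref{P3}, then run a type-by-type check of \ref{P2} and \eqref{eq:clubsuit} using Lusztig's tables --- is the right shape, and matches the paper's declared strategy.

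That said, there is a concrete error in your type $\A_n$ paragraph. You write that ``all component groups $A_{\bG}(u)$ are trivial and both sides of \ref{P2} equal one''. This is false when $Z(\bG)$ is disconnected (e.g.\ $\bG=\SL_n$): there $A_{\btG}(u)=1$ but $A_{\bG}(u)=Z_{\bG}(u)$ can be a nontrivial cyclic group, and \ref{P2} demands $|\Stab_{A_{G^{\star}}(s)}(\psi)| = |Z_{\bG}(u)^F|$, which need not be $1$. Producing $\tilde{s}$ with $A_{G^{\star}}(s)$ of the correct size, and with $\psi$ fixed by all of it, is genuine content in type $\A$; it does not fall out of \cref{thm:lusztig-hezard} for $\btG$, which says nothing about the action of $A_{G^{\star}}(s)$. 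The same remark applies to the other types: your sentence ``$A_{G^{\star}}(s)$ acts with orbits of uniform size on the entire family'' is exactly the assertion \eqref{eq:clubsuit}, and establishing it requires an explicit description of the action, not just of the family. So while the architecture of your argument is right, the proposal as written is a plan rather than a proof, and the type $\A$ step is misstated.
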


\noindent We will see that \eqref{eq:clubsuit} will be used in applications below. The detailed case by case verification of \cref{prop:A} is the main result of \cite{taylor:2012:finding-characters-satisfying}, which also forms the main content of the authors PhD thesis. The techniques used are those described in \cite[\S 2]{geck-hezard:2008:unipotent-support} together with Clifford theory.

\begin{rem}
In \cite{taylor:2012:finding-characters-satisfying} the author works with geometric conjugacy classes of semisimple elements with representatives in a fixed maximal torus of $\btG^{\star}$. One can do this if one modifies the action of the Frobenius endomorphism on the centraliser of the semisimple appropriately, (i.e.\ by twisting with an element of the Weyl group -- see \cite[\S2]{taylor:2012:finding-characters-satisfying}). It is an easy exercise for the reader that \cite[Theorem 2.5]{taylor:2012:finding-characters-satisfying} gives a pair as in \cref{prop:A}, (see also the proof of \cite[Theorem A]{taylor:2012:thesis}).
\end{rem}

\section{Generalised Gelfand--Graev Representations}\label{sec:GGGR}
Following \cite{geck-hezard:2008:unipotent-support} we will use the result of \cref{thm:A} to prove Kawanaka's conjecture holds for most simple groups $\bG$ with a disconnected centre. Firstly let us recall that in \cite[\S 3.1]{kawanaka:1986:GGGRs-exceptional} Kawanaka associates to every unipotent element $u \in G$ a GGGR which we denote $\Gamma_u$. This is a representation of $G$ whose construction depends only on the $G$-conjugacy class of $u$. Furthermore, these representations are such that $\Gamma_1$ is the regular representation, (where $1 \in G$ is the identity), and $\Gamma_u$ is a Gelfand--Graev representation if $u$ is a regular unipotent element.

If $\Gamma_u$ is a GGGR of $G$ then we will write $\gamma_u$ for the character of $G$ admitted by $\Gamma_u$ and we use the term GGGR to refer to both $\Gamma_u$ and $\gamma_u$. We can now state Kawanaka's conjecture.

\begin{conj}[Kawanaka, {\cite[3.3.1]{kawanaka:1985:GGGRs-and-ennola-duality}}]\label{conj:kawanaka}
Let $\bG$ be a connected reductive algebraic group and $p$ a good prime for $\bG$. Let $u_1,\dots,u_r$ be representatives for the unipotent conjugacy classes of $G$, then the set $\{\gamma_{u_i} \mid 1 \leqslant i \leqslant r\}$ forms a $\mathbb{Z}$-basis for the $\mathbb{Z}$-module of all unipotently supported virtual characters of $G$.
\end{conj}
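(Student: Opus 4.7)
The plan is to adapt the proof strategy of Geck and H\'{e}zard from \cite{geck-hezard:2008:unipotent-support} to the disconnected-centre setting, systematically replacing the use of \cref{thm:lusztig-hezard} by \cref{thm:A}. Let $V \subseteq \Cent(G)$ be the $\mathbb{Z}$-lattice of unipotently supported virtual characters. Since $V \otimes_{\mathbb{Z}} \overline{\mathbb{Q}}_{\ell}$ is the space of class functions vanishing outside the unipotent variety, the rank of $V$ equals the number of unipotent conjugacy classes of $G$, which is also the number of GGGRs $\gamma_{u_i}$. Since each $\gamma_{u_i}$ lies in $V$, it suffices to show that the $\gamma_{u_i}$ are $\mathbb{Z}$-linearly independent and that their $\mathbb{Z}$-span contains $V$.

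For each $F$-stable unipotent class $\mathcal{O}$ of $\bG$, I would use \cref{thm:A} to choose $\chi_{\mathcal{O}} \in \Irr(G)$ with $\Phi_G(\chi_{\mathcal{O}}) = \mathcal{O}$ and $n_{\chi_{\mathcal{O}}} = |A_{\bG}(u)^F|$ for a well-chosen $u \in \mathcal{O}^F$. Expanding each GGGR in the basis of characteristic functions of $F$-stable cuspidal character sheaves via the Lusztig--Shoji algorithm and computing the pairing $M_{u,\mathcal{O}} := \langle \gamma_u, \chi_{\mathcal{O}} \rangle_G$, one then verifies two properties: triangularity, $M_{u,\mathcal{O}} = 0$ whenever the $\bG$-class of $u$ is not contained in $\overline{\mathcal{O}}$; and a sharp diagonal, $M_{u,\mathcal{O}} = \pm 1$ when $u$ is a well-chosen representative of $\mathcal{O}$ itself. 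The identity $n_{\chi_{\mathcal{O}}} = |A_{\bG}(u)^F|$ is exactly the cancellation which turns the factor $|A_{\bG}(u)^F|/n_{\chi_{\mathcal{O}}}$ that appears naturally in the Lusztig--Shoji formula into a unit.

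The main obstacle is transporting the connected-centre calculation of Geck and H\'{e}zard through the Jordan decomposition \eqref{jordan-decomp}. In the notation of the proof of \cref{thm:A}, $\chi_{\mathcal{O}}$ is a constituent of $\Res^{\tilde{G}}_G(\tilde{\psi})$ whose degree and restriction multiplicity are both governed by $|\Stab_{A_{G^{\star}}(s)^{F^{\star}}}(\psi)|$, and the Lusztig--Shoji expansion on $\tilde{G}$ involves an average over the Lusztig family $\mathcal{F}$ containing $\psi$. The descent from $\tilde{G}$ to $G$ collapses cleanly precisely when this stabiliser is constant on $\mathcal{F}$, which is exactly the content of condition \eqref{eq:clubsuit} of \cref{prop:A}. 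Once \eqref{eq:clubsuit} is in force, Clifford theory combined with the Geck--H\'{e}zard computation applied to $\tilde{G}$ yields the triangular shape of $M$. The spin and half-spin cases with non-abelian $A_{\bG}(u)$ must be excluded exactly because \eqref{eq:clubsuit} can genuinely fail there, and accommodating them would require a finer family-by-family analysis on $\tilde{G}^F$ that falls outside the framework of this paper. With $M$ unitriangular, $\mathbb{Z}$-linear independence of $\{\gamma_{u_i}\}$ is immediate; and for any $v \in V$, the rational coefficients in $v = \sum_i c_i \gamma_{u_i}$ are recovered as $\mathbb{Z}$-linear combinations of the integers $\langle v, \chi_{\mathcal{O}} \rangle_G$ via the integer inverse of $M$, so $c_i \in \mathbb{Z}$, completing the basis property.
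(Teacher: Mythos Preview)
The central gap in your proposal is a counting mismatch. You select a single character $\chi_{\mathcal{O}}$ for each $F$-stable unipotent class $\mathcal{O}$ of $\bG$, but the number of unipotent $G$-conjugacy classes is strictly larger: each $\mathcal{O}^F$ breaks into several $G$-classes, parametrised by the $F$-conjugacy classes of $A_{\bG}(u)$. Your matrix $M_{u,\mathcal{O}}$ is therefore rectangular, not square, so ``unitriangular'' has no meaning and the argument cannot close. \cref{thm:A} as stated supplies only one character per $\mathcal{O}$; it does not by itself furnish enough characters to pair against all of the $\gamma_{u_i}$. (Incidentally, the paper pairs against the Alvis--Curtis duals $\chi^*$, not against $\chi$ itself; this is what makes the Geck--H\'{e}zard multiplicity computation go through.)

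The paper closes exactly this gap by working one level up. For each $\mathcal{O}$ it first applies the connected-centre result of Geck--H\'{e}zard on $\tilde{G}$ to obtain $|A_{\btG}(u)|$ characters $\tilde{\chi}_1,\dots,\tilde{\chi}_d$ with $\langle \tilde{\chi}_i^*,\tilde{\gamma}_{u_j}\rangle_{\tilde{G}} = \delta_{i,j}$, and then restricts each $\tilde{\chi}_i^*$ to $G$. Condition \eqref{eq:clubsuit} guarantees that every such restriction splits into exactly $|Z_{\bG}(u)^F|$ irreducible constituents, and the essential extra step (\cref{prop:inner-prod}) is that the $\tilde{T}$-conjugation action on these constituents, combined with Geck's formula $\gamma_u^t = \gamma_{tut^{-1}}$ (\cref{prop:gggr-conj}), separates the $G$-classes within each $\tilde{G}$-class and yields $\langle \chi_{i,j}^*,\gamma_{u_{x,y}}\rangle = \delta_{i,x}\delta_{j,y}$. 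This is what produces a genuinely square matrix with identity diagonal blocks; the few non-abelian $\mathfrak{S}_3$ cases in $\E_6$, $\E_7$ are treated separately via Geck's explicit multiplicity table. Your sketch correctly identifies the role of \eqref{eq:clubsuit} in controlling the restriction, but it omits this conjugation argument entirely, and without it there is no mechanism for promoting a single $\chi_{\mathcal{O}}$ to a family of characters indexed by the $G$-classes inside $\mathcal{O}^F$.
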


\noindent If $\bG$ is connected reductive with a connected centre and simple quotient $\bG/Z(\bG)$ then, under the assumption that $p$, $q$ are large enough, this was proved by Geck and H\'{e}zard in \cite[Theorem 4.5]{geck-hezard:2008:unipotent-support}. Here $p$, $q$ large enough means that the results of \cite{lusztig:1992:a-unipotent-support} are true. We will now assume that $p$, $q$ are large enough so that we may use the results of \cite{geck-hezard:2008:unipotent-support}.

For the following discussion of GGGRs to make sense we must first make some choices. We fix an $F$-stable Borel subgroup $\btB \leqslant \btG$ which contains an $F$-stable maximal torus $\btT \leqslant \btG$. We define $\bB := \btB \cap \bG$ and $\bT := \btT \cap \bG$, then these are similarly such groups for $\bG$. Let $\bU$ be the common unipotent radical of both $\bB$ and $\btB$ then $\btB = \btT \ltimes \bU$ and $\bB = \bT \ltimes \bU$. We denote the fixed points of $\btB$, (resp.\ $\btT$, $\bB$, $\bT$, $\bU$), under the Frobenius endomorphism by $\tilde{B}$, (resp.\ $\tilde{T}$, $B$, $T$, $U$). Our GGGRs are then defined with respect to these choices.

\begin{rem}
By \cite[Remark 2.2]{geck:2004:on-the-schur-indices} we know class representatives for all unipotent classes of $G$ may be found in $U$ hence we assume this to be the case from this point forward. This means we may also assume that our well-chosen representatives lie in $U$, as any $G$-conjugate of a well-chosen representative is well chosen.
\end{rem}

When $Z(\bG)$ is disconnected we will want to relate the GGGRs of $G$ to the GGGRs of $\tilde{G}$. If $u \in \tilde{G}$ is a unipotent element then $u$ is also a unipotent element of $G$. We will denote the GGGR of $\tilde{G}$ associated to $u$ by $\tilde{\gamma}_u$ and the GGGR of $G$ associated to $u$ by $\gamma_u$. With this in mind we have the following observation, which follows immediately from the construction of GGGRs.

\begin{lem}\label{lem:induction-of-GGGRs}
Let $\mathcal{\tilde{O}}$ be a $\tilde{G}$-conjugacy class of unipotent elements and $\{u_i\} \subseteq \mathcal{\tilde{O}}$ a collection of class representatives for the $G$-conjugacy classes contained in $\mathcal{\tilde{O}} \cap G$, then $\tilde{\gamma}_{u_i} = \Ind_{G}^{\tilde{G}}(\gamma_{u_i})$ for each $i$.
\end{lem}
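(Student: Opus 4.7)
The plan is to unwind Kawanaka's construction of a GGGR and observe that all of the defining data lives inside the common unipotent radical of $\bB$ and $\btB$, so the two constructions produce literally the same induction datum; the lemma then collapses to transitivity of induction.

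First I would recall precisely Kawanaka's recipe, as in \cite[\S 3.1]{kawanaka:1986:GGGRs-exceptional}. Given a unipotent $u\in \bU$, one attaches to $u$ a weighted cocharacter and an associated grading of $\Lie(\bG)$; from this grading one extracts an $F$-stable closed subgroup $H\leqslant \bU$ (the group $\bU(2)$, or an appropriate subgroup sitting between $\bU(2)$ and $\bU(1)$ in the mixed-parity case) and a linear character $\psi_u$ of $H^F$ built from a trace form determined by $u$. The GGGR is then $\gamma_u = q^{d_u/2}\Ind_{H^F}^{G}(\psi_u)$ for a suitable integer $d_u$ read off from the dimensions of the weight spaces.

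Next I would check that running the same recipe in $\btG$ yields the \emph{same} pair $(H^F,\psi_u)$ and the same exponent $d_u$. This is where the regular embedding really does the work: since $\btG = \bG\cdot Z(\btG)$ is an almost direct product and $Z(\btG)$ is a central torus, the Lie algebras of $\bG$ and $\btG$ agree outside the central part, the chosen Borel and torus are compatible under $\iota$, and the unipotent radical $\bU$ is common to $\bB$ and $\btB$ (as already noted in the paper). Consequently the weighted cocharacter of $u$, the filtration $\bU(i)$, the trace form, and hence both $H$ and $\psi_u$, are unchanged by passage to $\btG$. Therefore
\begin{equation*}
\tilde{\gamma}_u \;=\; q^{d_u/2}\Ind_{H^F}^{\tilde{G}}(\psi_u).
\end{equation*}

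Finally I would apply transitivity of induction along the chain $H^F \leqslant G \leqslant \tilde{G}$ to conclude
\begin{equation*}
\tilde{\gamma}_u \;=\; \Ind_G^{\tilde{G}}\!\bigl(q^{d_u/2}\Ind_{H^F}^{G}(\psi_u)\bigr) \;=\; \Ind_G^{\tilde{G}}(\gamma_u),
\end{equation*}
and specialise to each $u_i$. The only point that requires any care is the verification that the unipotent datum is genuinely common to both groups, but this is bookkeeping from the properties of the regular embedding rather than a substantive obstacle; there is no real difficulty in the argument, which is exactly why the lemma is stated as following ``immediately from the construction''.
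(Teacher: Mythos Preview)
Your argument is correct and is precisely the unwinding of ``follows immediately from the construction of GGGRs'' that the paper leaves to the reader; the paper provides no further proof. The key observation you make---that the regular embedding leaves the unipotent radical $\bU$, the weighted Dynkin data, the subgroup $H$, the character $\psi_u$, and the exponent $d_u$ unchanged, so that transitivity of induction finishes the job---is exactly the intended content.
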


We will need another observation regarding the natural conjugation action of $\tilde{G}$ on a GGGR. The group $\tilde{G}$ is the product $\tilde{T}\cdot G$, therefore we may assume that any left transversal of $G$ in $\tilde{G}$ is contained in $\tilde{T}$. In particular, if $\chi$ is a class function of $G$ then for any $g \in \tilde{G}$ there exists $t \in \tilde{T}$ such that $\chi^g = \chi^t$, (where $\chi^g(x) = \chi(gxg^{-1})$ for all $x \in G$). With this notation we have the following result.

\begin{prop}[Geck, {\cite[Proposition 2.2]{geck:1993:basic-sets-II}}]\label{prop:gggr-conj}
For any unipotent element $u \in U$ and any $t \in \tilde{T}$ we have $\gamma_u^t = \gamma_{tut^{-1}}$.
\end{prop}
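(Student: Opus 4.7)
The plan is to trace the $u$-dependence in Kawanaka's construction of $\gamma_u$ and exploit the fact that $t$ lies in the ambient torus $\btT$. Recall that $\gamma_u$ is built by first fixing a cocharacter $\lambda \colon \mathbb{G}_m \to \bG$ associated with $u$ (the Dynkin cocharacter), which may be chosen to factor through $\bT$. This produces a $\mathbb{Z}$-grading $\mathrm{Lie}(\bG) = \bigoplus_i \mathfrak{g}(i)$, a decreasing filtration of $\bU$ by connected subgroups $\bU_\lambda(i)$, a nilpotent element $e \in \mathfrak{g}(2)$ corresponding to $u$, and a linear character $\varphi_u$ of a certain subgroup $H_u$ sandwiched between $\bU_\lambda(2)^F$ and $\bU_\lambda(1)^F$, defined from $e$ via a non-degenerate bilinear form. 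Up to a normalising scalar, $\gamma_u$ is then $\Ind_{H_u}^G \varphi_u$.

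The central observation is that every element of $\btT$ commutes with the image of $\lambda$, because $\lambda(\mathbb{G}_m) \subseteq \bT \subseteq \btT$ and $\btT$ is abelian. Consequently conjugation by $t$ preserves the grading, fixes each subgroup $\bU_\lambda(i)$ setwise, and therefore stabilises $H_u$. Moreover, the very same cocharacter $\lambda$ is adapted to $tut^{-1}$ (again because $t$ commutes with $\lambda$), and conjugation by $t$ carries the nilpotent $e$ associated with $u$ to the nilpotent associated with $tut^{-1}$. Since the bilinear form used to construct $\varphi_u$ is $\btT$-invariant, this forces $\varphi_u^t = \varphi_{tut^{-1}}$ as characters of the common subgroup $H_u = H_{tut^{-1}}$.

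With these two ingredients the result follows from the elementary identity $(\Ind_H^G \psi)^t = \Ind_{tHt^{-1}}^G \psi^t$, applied with $H = H_u$ and $\psi = \varphi_u$, since $tH_ut^{-1} = H_u = H_{tut^{-1}}$. The main obstacle is the careful book-keeping needed to verify that Kawanaka's definition of $\varphi_u$ is genuinely $\btT$-equivariant in $u$; this ultimately rests on the fact that every piece of data entering the construction — cocharacter, grading, bilinear form, unipotent filtration — is intrinsic to $\bG$ and is centralised by $\btT$. In particular, no ambiguity arises from the freedom to choose a different cocharacter for $tut^{-1}$, since the $t$-conjugate of $\lambda$ coincides with $\lambda$.
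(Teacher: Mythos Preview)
The paper does not prove this proposition at all; it is simply quoted from Geck \cite[Proposition~2.2]{geck:1993:basic-sets-II} and used as a black box. So there is no ``paper's own proof'' to compare against, and your sketch is in fact supplying an argument where the article gives none.

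Your outline is essentially the standard argument and is correct in spirit: because the Dynkin cocharacter $\lambda$ can be taken inside $\bT \subseteq \btT$ and $\btT$ is abelian, conjugation by $t \in \tilde{T}$ commutes with $\lambda$, preserves the weight filtration $\bU_{\lambda}(i)$, and transports all the auxiliary data for $u$ to the corresponding data for $tut^{-1}$. One small imprecision worth flagging: the literal equality $H_u = H_{tut^{-1}}$ need not hold, because $H_u$ depends on a choice of Lagrangian subspace of $\mathfrak{g}(1)$ with respect to the symplectic form $(x,y) \mapsto \kappa(e,[x,y])$, and $t$ carries a Lagrangian for $u$ to a Lagrangian for $tut^{-1}$ rather than fixing it. What is true is that $tH_ut^{-1}$ is a \emph{legitimate choice} for $H_{tut^{-1}}$ and that $\varphi_u^t$ is then the corresponding linear character; since Kawanaka's GGGR is independent of these choices, the identity $\gamma_u^t = \Ind_{tH_ut^{-1}}^G(\varphi_u^t) = \gamma_{tut^{-1}}$ follows. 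With that adjustment your argument goes through.
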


\section{The Case of Abelian Component Groups}\label{sec:ab-comp-groups}
To prove Kawanaka's conjecture we will follow the same line of argument as in \cite[\S 4]{geck-hezard:2008:unipotent-support}. In particular the focus will be on the following observation.

\begin{lem}[Geck--H\'{e}zard, {\cite[Lemma 4.2]{geck-hezard:2008:unipotent-support}}]\label{lem:matrix-mult-invert}
Let $u_1,\dots,u_d$ be representatives for the unipotent conjugacy classes of $G$. Assume that there exist virtual characters $\chi_1,\dots,\chi_d$ of $G$ such that the matrix of scalar products $(\langle \chi_i,\gamma_{u_j} \rangle_{G})_{1 \leqslant i,j \leqslant d}$ is invertible over $\mathbb{Z}$, then \cref{conj:kawanaka} holds.
\end{lem}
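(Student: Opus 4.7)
The plan is to deduce the $\mathbb{Z}$-basis statement by elementary integral linear algebra, once one records that every GGGR is itself unipotently supported. By construction $\Gamma_u$ is induced from a character of a unipotent subgroup of $G$, and the Frobenius formula for an induced character forces $\gamma_u(g)=0$ on any $g\in G$ not conjugate into that subgroup; in particular $\gamma_u$ vanishes off the unipotent set. Hence each $\gamma_{u_j}$ lies in the $\mathbb{Z}$-module $V$ of unipotently supported virtual characters of $G$, and since $V$ embeds into $\mathbb{Z}^d$ via evaluation at $(u_1,\dots,u_d)$ it is free of rank at most $d$.

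Now I would use the hypothesis on $M=(\langle\chi_i,\gamma_{u_j}\rangle_G)_{i,j}$ in two steps. First, $\mathbb{Z}$-linear independence of the $\gamma_{u_j}$ is automatic, since a relation $\sum_j c_j\gamma_{u_j}=0$ pairs with the $\chi_i$ to yield $M\mathbf{c}=0$ and hence $\mathbf{c}=0$. Together with the previous rank bound this already forces $\{\gamma_{u_1},\dots,\gamma_{u_d}\}$ to be a $\overline{\mathbb{Q}}_\ell$-basis of the full space of unipotently supported class functions on $G$. To upgrade this to a $\mathbb{Z}$-basis of $V$, given $f\in V$ I would expand $f=\sum_j a_j\gamma_{u_j}$ with $a_j\in\overline{\mathbb{Q}}_\ell$, pair with each $\chi_i$ to get $\langle\chi_i,f\rangle_G=\sum_j M_{ij}a_j$, and solve to obtain $a_j=\sum_i(M^{-1})_{ji}\langle\chi_i,f\rangle_G$. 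Since $f$ and the $\chi_i$ are virtual characters, each inner product on the right lies in $\mathbb{Z}$; since $\det M=\pm 1$, the entries of $M^{-1}$ lie in $\mathbb{Z}$; hence every $a_j\in\mathbb{Z}$, and $\{\gamma_{u_j}\}$ is a $\mathbb{Z}$-basis of $V$, which is exactly \cref{conj:kawanaka}.

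The only ingredient carrying real content is that induction from a unipotent subgroup kills non-unipotent elements; everything else is formal linear algebra over $\mathbb{Z}$. The genuine difficulty --- which this lemma neatly isolates --- will be to actually produce virtual characters $\chi_i$ for which the resulting $d\times d$ integer matrix $M$ has determinant $\pm 1$; this is exactly what \cref{thm:A} together with the condition \eqref{eq:clubsuit} of \cref{prop:A} are designed to arrange in the subsequent sections.
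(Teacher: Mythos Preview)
The paper does not supply a proof of this lemma; it is quoted directly from Geck--H\'{e}zard and used as a black box. Your argument is the standard one and is correct in substance.

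One small imprecision is worth flagging: the evaluation of a virtual character at a unipotent element is an algebraic integer, not in general a rational integer, so the map ``evaluation at $(u_1,\dots,u_d)$'' does not land in $\mathbb{Z}^d$. The cleanest way to obtain the rank bound you want is to note instead that $V$ is a $\mathbb{Z}$-submodule of the free $\mathbb{Z}$-module $\mathbb{Z}[\Irr(G)]$, hence free, and that $V\otimes_{\mathbb{Z}}\overline{\mathbb{Q}}_\ell$ sits inside the $d$-dimensional space of unipotently supported class functions, forcing $\mathrm{rank}_{\mathbb{Z}}V\leqslant d$. With this adjustment the remainder of your argument (independence via $M\mathbf{c}=0$, then integrality of the coefficients via $M^{-1}\in\mathrm{GL}_d(\mathbb{Z})$ applied to the integer vector $(\langle\chi_i,f\rangle_G)_i$) goes through unchanged.
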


\noindent We will start by proving a result which is crucial in dealing with almost all cases. Let $D_G : \Cent(G) \to \Cent(G)$ denote the Alvis--Curtis duality map. It is known that $D_G$ is an isometry of $\Cent(G)$ and $D_G \circ D_G$ is the identity, (see for example \cite[\S 8]{digne-michel:1991:representations-of-finite-groups-of-lie-type}). Hence, for any irreducible character $\chi \in \Irr(G)$ there exists a sign $\epsilon_{\chi} \in \{1,-1\}$ such that $\epsilon_{\chi}D_G(\chi) \in \Irr(G)$. For any $\chi$ we denote $\epsilon_{\chi}D_G(\chi)$ by $\chi^*$. It will be useful for us to know the following relationship between the Alvis--Curtis duality maps $D_{\tilde{G}}$ and $D_G$.

\begin{lem}\label{lem:duality-restriction}
For any connected reductive algebraic group $\bG$ we have
\begin{equation*}
\Res_{G}^{\tilde{G}} \circ D_{\tilde{G}} = D_G \circ \Res_{G}^{\tilde{G}}.
\end{equation*}
\end{lem}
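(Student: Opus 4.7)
The plan is to unwind the definition of Alvis--Curtis duality and show that the intertwining identity holds term by term in the alternating sum. Recall that
\begin{equation*}
D_G = \sum_{I \subseteq \Delta} (-1)^{|I|}\, R_{L_I}^G \circ {}^*R_{L_I}^G,
\end{equation*}
where the sum runs over subsets of the simple roots $\Delta$ and $L_I$ is the standard Levi complement of the standard parabolic $P_I \supseteq B$; an analogous formula holds for $D_{\tilde{G}}$. Since the embedding $\bG \hookrightarrow \btG$ only adjoins a central torus, $\bG$ and $\btG$ share the same root datum relative to $\bT \subseteq \btT$, so the two index sets coincide and the standard parabolics match up via $\tilde{P}_I \cap G = P_I$, with the same unipotent radical $U_I$ and with $\tilde{L}_I \cap G = L_I$. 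Thus it will suffice to show, for each standard Levi pair $L \subseteq \tilde{L}$, that
\begin{equation*}
\Res^{\tilde{G}}_G \circ R_{\tilde{L}}^{\tilde{G}} \circ {}^*R_{\tilde{L}}^{\tilde{G}} \;=\; R_L^G \circ {}^*R_L^G \circ \Res^{\tilde{G}}_G.
\end{equation*}

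I would prove this by splitting it into two commutation identities. First, directly from the averaging definition
\begin{equation*}
({}^*R_{\tilde{L}}^{\tilde{G}}(\tilde{\chi}))(l) \;=\; \frac{1}{|U|}\sum_{u \in U}\tilde{\chi}(lu),
\end{equation*}
restriction to $l \in L$ immediately gives $\Res^{\tilde{L}}_L \circ {}^*R_{\tilde{L}}^{\tilde{G}} = {}^*R_L^G \circ \Res^{\tilde{G}}_G$, because the formula only involves values of $\tilde{\chi}$ on elements $lu \in L \cdot U \subseteq G$. Second, for Harish--Chandra induction I would use Mackey's formula: since $\btG = \bG \cdot \btT$ as algebraic groups and this descends to $\tilde{G} = G \cdot \tilde{T}$ on finite points (a standard consequence of Lang--Steinberg for regular embeddings), and since $\tilde{T} \subseteq \tilde{P}$, one has $\tilde{G} = G \cdot \tilde{P}$ with $G \cap \tilde{P} = P$. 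Mackey therefore reduces to a single double coset:
\begin{equation*}
\Res^{\tilde{G}}_G \circ \Ind_{\tilde{P}}^{\tilde{G}} \;=\; \Ind_P^G \circ \Res^{\tilde{P}}_P,
\end{equation*}
and unwinding inflation from $\tilde{L}$ to $\tilde{P}$ (which factors through the common inflation to $P$) yields $\Res^{\tilde{G}}_G \circ R_{\tilde{L}}^{\tilde{G}} = R_L^G \circ \Res^{\tilde{L}}_L$.

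Composing these two identities produces the desired equality for each standard Levi, after which summing over $I \subseteq \Delta$ with the sign $(-1)^{|I|}$ yields the lemma. The main obstacle I expect is nothing conceptually deep, but rather the careful bookkeeping around the identification of standard parabolics and Levis of $\bG$ and $\btG$ together with the verification that $\tilde{G} = G \cdot \tilde{T}$ on the level of finite groups, so that Mackey's formula collapses to a single term; once these identifications are in place, both commutation identities are essentially formal manipulations of the defining formulas.
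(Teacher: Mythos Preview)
Your proposal is correct and follows essentially the same route as the paper: write duality as an alternating sum over standard parabolics, identify the parabolics and Levis of $\bG$ and $\btG$ via intersection, and then use that $\Res$ commutes with both Harish--Chandra induction and restriction term by term. The only cosmetic difference is that the paper quotes \cite[Proposition 10.10]{bonnafe:2006:sln} for the two commutation identities $\Res_G^{\tilde G}\circ R_{\tilde L}^{\tilde G} = R_L^G\circ\Res_L^{\tilde L}$ and $\Res_L^{\tilde L}\circ{}^*R_{\tilde L}^{\tilde G} = {}^*R_L^G\circ\Res_G^{\tilde G}$, whereas you prove them directly via the averaging formula and a one--double--coset Mackey argument; the paper also records explicitly that $\rk(\bP)=\rk(\tilde\bP)$ to match the signs, which in your indexing by $I\subseteq\Delta$ is automatic.
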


\begin{proof}
If $\bH$ is a connected reductive algebraic group defined over $\mathbb{F}_q$ then we denote by $\rk(\bH)$ its semisimple $\mathbb{F}_q$-rank, (see \cite[Definition 8.6]{digne-michel:1991:representations-of-finite-groups-of-lie-type}). Also if $\bL \leqslant \bG$ is an $F$-stable Levi subgroup contained in an $F$-stable subgroup of $\bG$ then we denote by $R_{\bL}^{\bG}$ the corresponding Harish-Chandra induction map. By \cite[Proposition 2.2]{digne-michel:1991:representations-of-finite-groups-of-lie-type} if $\tilde{\bH}$ is a Borel, parabolic or Levi subgroup of $\btG$ then $\bH:=\tilde{\bH}\cap\bG$ is also such a subgroup of $\bG$ and the map $\tilde{\bH} \to \bH$ gives a bijection between the sets of such subgroups. Identifying these groups in this way and using the statements in \cite[Proposition 10.10]{bonnafe:2006:sln} we have
\begin{align*}
\Res_G^{\tilde{G}} \circ D_{\btG} &= \sum_{\btP \geqslant \btB} (-1)^{\rk(\btP)} (\Res_G^{\tilde{G}} \circ R_{\btL}^{\btG} )\circ {}^*R_{\btL}^{\btG},\\
&= \sum_{\btP \geqslant \btB} (-1)^{\rk(\btP)} R_{\bL}^{\bG} \circ (\Res_L^{\tilde{L}}\circ {}^*R_{\btL}^{\btG}),\\
&= \sum_{\bP \geqslant \bB} (-1)^{r(\bP)} (R_{\bL}^{\bG} \circ {}^*R_{\bL}^{\bG}) \circ \Res_G^{\tilde{G}},\\
&= D_{\bG} \circ \Res_G^{\tilde{G}}.
\end{align*}
To obtain the third equality we have used the fact that the inclusion morphism $\bP \to \btP$ induces an isomorphism between the derived subgroups of $\bP$ and $\btP$ which is defined over $\mathbb{F}_q$, hence $\rk(\bP) = \rk(\btP)$.
\end{proof}

\begin{cor}\label{cor:dual-restrict}
Let $\tilde{\chi} \in \Irr(\tilde{G})$ and $\chi_i \in \Irr(G)$ be such that
\begin{equation*}
\Res^{\tilde{G}}_{G}(\tilde{\chi}) = \chi_1+\cdots+\chi_k,
\end{equation*}
then $\epsilon_{\tilde{\chi}} = \epsilon_{\chi_i}$ for all $1 \leqslant i \leqslant k$. In particular $\Res^{\tilde{G}}_G(\tilde{\chi}^*) = \chi_1^* + \cdots + \chi_k^*$.
\end{cor}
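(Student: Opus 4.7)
The strategy is to apply the intertwining relation of \cref{lem:duality-restriction} directly to $\tilde{\chi}$ and then compare the resulting decomposition into irreducibles on both sides.

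First I would compute
\begin{equation*}
\Res^{\tilde{G}}_G \bigl(D_{\tilde{G}}(\tilde{\chi})\bigr) \;=\; D_G\bigl(\Res^{\tilde{G}}_G(\tilde{\chi})\bigr)
\end{equation*}
using \cref{lem:duality-restriction}. Unfolding the definition of the signs, the left-hand side equals $\epsilon_{\tilde{\chi}}\,\Res^{\tilde{G}}_G(\tilde{\chi}^*)$, while the right-hand side equals $\sum_{i=1}^k \epsilon_{\chi_i}\,\chi_i^*$. So we obtain the identity
\begin{equation*}
\epsilon_{\tilde{\chi}}\,\Res^{\tilde{G}}_G(\tilde{\chi}^*) \;=\; \sum_{i=1}^k \epsilon_{\chi_i}\,\chi_i^*
\end{equation*}
of virtual characters of $G$.

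Next I would analyse each side in the basis $\Irr(G)$. Since $\tilde{\chi}^* \in \Irr(\tilde{G})$, Lusztig's multiplicity-freeness result (recalled just before \cref{thm:A}) implies that $\Res^{\tilde{G}}_G(\tilde{\chi}^*)$ is a sum of \emph{distinct} irreducibles of $G$, so the left-hand side expresses a virtual character in which every irreducible constituent appears with coefficient $\epsilon_{\tilde{\chi}} \in \{\pm 1\}$. On the right-hand side, the characters $\chi_1^*,\dots,\chi_k^*$ are pairwise distinct (the involution $\chi \mapsto \chi^*$ is a bijection of $\Irr(G)$ because $D_G^2 = \mathrm{id}$), so the $i$-th constituent appears with coefficient $\epsilon_{\chi_i}$. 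Comparing coefficients along the basis $\Irr(G)$ then forces $\epsilon_{\chi_i} = \epsilon_{\tilde{\chi}}$ for each $i$, and also identifies the constituent sets, which yields the final assertion $\Res^{\tilde{G}}_G(\tilde{\chi}^*) = \chi_1^* + \cdots + \chi_k^*$.

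There isn't really a hard step here; the only thing to be a bit careful about is the combinatorics of matching irreducible decompositions, which relies crucially on the multiplicity-freeness of $\Res^{\tilde{G}}_G(\tilde{\chi}^*)$ and the injectivity of $\chi \mapsto \chi^*$ on $\Irr(G)$. Everything else is a formal consequence of \cref{lem:duality-restriction}.
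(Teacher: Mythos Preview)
Your proof is correct and follows essentially the same route as the paper: apply \cref{lem:duality-restriction} to $\tilde{\chi}$, expand both sides, and compare signs. The only difference is that the paper avoids invoking multiplicity-freeness of $\Res^{\tilde{G}}_G(\tilde{\chi}^*)$ a second time, using instead the weaker (and obvious) fact that this restriction is a genuine character, so in the expression $\sum_i \epsilon_{\tilde{\chi}}\epsilon_{\chi_i}\,\chi_i^*$ every coefficient must be nonnegative; since the $\chi_i^*$ are distinct this forces $\epsilon_{\tilde{\chi}}\epsilon_{\chi_i}=1$.
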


\begin{proof}
We know $\epsilon_{\tilde{\chi}}D_{\tilde{G}}(\tilde{\chi}) \in \Irr(\tilde{G})$, in particular it is a character of $\tilde{G}$. This means
\begin{equation*}
\Res_{G}^{\tilde{G}}(\tilde{\chi}^*) = \epsilon_{\tilde{\chi}}(\Res_{G}^{\tilde{G}}\circ D_{\tilde{G}})(\tilde{\chi}) = \epsilon_{\tilde{\chi}}(D_G \circ\Res_{G}^{\tilde{G}})(\tilde{\chi}) = \epsilon_{\tilde{\chi}}D_G(\chi_1) + \cdots + \epsilon_{\tilde{\chi}}D_G(\chi_k).
\end{equation*}
is a character of $G$, where the second equality is obtained by \cref{lem:duality-restriction}. As it is a character all coefficients of irreducible constituents must be positive, therefore we cannot have $\epsilon_{\chi_i} \neq \epsilon_{\tilde{\chi}}$ for any $1 \leqslant i \leqslant k$. The final statement is clear by definition.
\end{proof}

We temporarily fix the following notation. If $\mathcal{O}$ is an $F$-stable unipotent class of $\bG$ then we denote by $\mathcal{\tilde{O}}_i$, for $1 \leqslant i \leqslant d$, the $\tilde{G}$-classes such that $\mathcal{O}^F = \mathcal{\tilde{O}}_1 \sqcup \dots \sqcup \mathcal{\tilde{O}}_d$. For each $\mathcal{\tilde{O}}_i$ we write $\mathcal{O}_{i,j}$, for $1 \leqslant j \leqslant k_i$ (where $k_i$ is a number depending upon the class $\mathcal{O}_i$), for the $G$-classes such that $\mathcal{\tilde{O}}_i \cap G = \mathcal{O}_{i,1} \sqcup \cdots \sqcup \mathcal{O}_{i,k_i}$. Finally we fix $G$-class representatives $u_{i,j} \in \mathcal{O}_{i,j}$ for each $1 \leqslant i \leqslant d$ and $1 \leqslant j \leqslant k_i$.

\begin{prop}\label{prop:inner-prod}
Let $\mathcal{O}$ be an $F$-stable unipotent class of $\bG$ and assume $A_{\bG}(u)$ is abelian. Let $(\tilde{s},\psi)$ be the pair prescribed by \cref{prop:A}, then there exist irreducible characters $\chi_{i,j} \in \mathcal{E}(G,s)$ such that $\langle \chi_{i,j}^*, \gamma_{u_{x,y}} \rangle = \delta_{i,x}\delta_{j,y}$, for $1 \leqslant i,x \leqslant d$ and $1 \leqslant j,y \leqslant k_i$, (here $\delta_{*,*}$ denotes the Kronecker delta). Furthermore $k_i = |Z_{\bG}(u)^F|$ for all $i$.
\end{prop}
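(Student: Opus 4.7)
First I would establish the identity $k_i = |Z_{\bG}(u)^F|$. Since $A_{\bG}(u)$ is abelian and $u$ is well-chosen, \cref{lem:abelian-comp-group} collapses $F$-conjugacy to ordinary conjugacy in both $A_{\bG}(u)$ and $A_{\btG}(u)$; combined with the well-chosen identity $|A_{\bG}(u)^F|=|Z_{\bG}(u)^F|\cdot|A_{\btG}(u)|$, the sequence \eqref{eq:short-exact-frob} becomes a short exact sequence $1 \to Z_{\bG}(u)^F \to A_{\bG}(u) \to A_{\btG}(u) \to 1$. Translating back to conjugacy classes, the natural map from $G$-classes in $\mathcal{O}^F$ to $\tilde G$-classes in $\mathcal{O}^F$ has constant fibre size $|Z_{\bG}(u)^F|$, giving the claim.

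For the characters themselves I would apply \cref{prop:A} to obtain a pair $(\tilde s,\psi)$ satisfying \ref{P1}--\ref{P3} and $(\clubsuit)$. The proof of \cref{thm:A} associates to this pair a $\tilde\psi \in \Irr(\tilde G)$ whose restriction $\Res^{\tilde{G}}_{G}(\tilde\psi) = \chi_{1} + \cdots + \chi_{k}$ produces $k=|Z_{\bG}(u)^F|$ characters in $\mathcal{E}(G,s)$ with unipotent support $\mathcal{O}$ and $n_{\chi_j} = |A_{\bG}(u)^F|$. To produce $d$ such restrictions, one for each $\tilde G$-class, I would exploit $(\clubsuit)$: every character in the Lusztig family $\mathcal{F}$ has $A_{G^{\star}}(s)$-stabiliser of size $k$, so each $A_{G^{\star}}(s)$-orbit on $\mathcal{F}$ has length $|A_{G^{\star}}(s)|/k$. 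In the abelian case a standard count (using $|A_{G^{\star}}(s)| = |A_{\bG}(u)^F| = dk$ and Lusztig's parametrisation of the family) gives exactly $d$ such orbits; via \eqref{jordan-decomp} these correspond bijectively to the $d$ many $\tilde G/G\cdot Z(\tilde G)$-orbits in the associated subset of $\mathcal{E}(G,s)$, which we label by the $\tilde G$-classes $\mathcal{\tilde O}_i$. Running the construction once per orbit yields characters $\tilde\psi_i$ with $\Res^{\tilde G}_G(\tilde\psi_i) = \chi_{i,1} + \cdots + \chi_{i,k}$.

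For the scalar products, \cref{lem:induction-of-GGGRs}, Frobenius reciprocity and \cref{cor:dual-restrict} give
\[
\langle \tilde\psi_i^*, \tilde\gamma_{u_{x,1}}\rangle_{\tilde{G}} = \sum_{j=1}^{k}\langle \chi_{i,j}^*, \gamma_{u_{x,1}}\rangle_{G}.
\]
The left-hand side is computed in the connected-centre setting by combining \cref{thm:lusztig-hezard} applied to $\tilde G$ with Lusztig's scalar-product formula from \cite{lusztig:1992:a-unipotent-support}; arranging the labelling so that $\tilde\psi_i$ corresponds to $\mathcal{\tilde O}_i$ yields $\delta_{ix}$. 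Each summand on the right is a non-negative integer (both $\chi_{i,j}^*$ and $\gamma_{u_{x,1}}$ being true characters), so the case $i\neq x$ forces them all to vanish and the case $i=x$ leaves exactly one summand equal to $1$; after reindexing we take $\langle \chi_{i,j}^*,\gamma_{u_{i,1}}\rangle_{G}=\delta_{j,1}$. For $y>1$, \cref{prop:gggr-conj} gives $\gamma_{u_{i,y}}=\gamma_{u_{i,1}}^{t_y}$ for suitable $t_y\in\tilde T$, whence $\langle \chi_{i,j}^*,\gamma_{u_{i,y}}\rangle_{G} = \langle (\chi_{i,j}^*)^{t_y^{-1}},\gamma_{u_{i,1}}\rangle_{G}$; enumerating the $u_{i,y}$'s so that the $\tilde T$-action on them matches the $\tilde G/G\cdot Z(\tilde G)$-action on $\{\chi_{i,j}\}$ then delivers $\delta_{jy}$.

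The principal obstacle is the labelling compatibility in the final step: both the $\tilde T$-orbit of $u_{i,1}$ within $\mathcal{\tilde O}_i \cap G$ and the $\tilde G/G\cdot Z(\tilde G)$-orbit of the $\chi_{i,j}$'s are torsors for the same group of order $k$, but matching these two actions canonically requires exploiting $\tilde G = \tilde T\cdot G$ together with the explicit form of \eqref{jordan-decomp}(i). This, together with the precise orbit count used in the second paragraph, is where the bulk of the technical work lies.
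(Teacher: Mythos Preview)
Your outline has the right ingredients --- Frobenius reciprocity via \cref{lem:induction-of-GGGRs}, the restriction formula from \cref{cor:dual-restrict}, and the $\tilde T$-conjugation action on GGGRs from \cref{prop:gggr-conj} --- but there are two genuine gaps, and the paper's proof is organised quite differently in a way that sidesteps your ``principal obstacle'' entirely.

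\textbf{First gap: producing the $d$ characters on $\tilde G$.} Your second paragraph tries to manufacture $d$ characters $\tilde\psi_i$ by counting $A_{G^{\star}}(s)$-orbits on $\mathcal{F}$, invoking the identity $|A_{G^{\star}}(s)| = |A_{\bG}(u)^F| = dk$. This identity is nowhere established, and in general $|A_{G^{\star}}(s)|$ depends on the chosen semisimple element, not just on the unipotent class. More importantly, even granting the orbit count, nothing in your argument produces the crucial relation $\langle \tilde\psi_i^*, \tilde\gamma_{u_{x,1}}\rangle_{\tilde G} = \delta_{i,x}$; your appeal to ``\cref{thm:lusztig-hezard} applied to $\tilde G$ with Lusztig's scalar-product formula'' yields \emph{one} such character, not $d$ of them indexed compatibly with the $\tilde G$-classes. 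The paper instead quotes \cite[Proposition~4.3]{geck-hezard:2008:unipotent-support} as a black box: under \ref{P1} and \ref{P3} it supplies exactly $d$ characters $\tilde\chi_{1,1},\dots,\tilde\chi_{d,1}$ in the family $\tilde{\mathcal F}$ with $\langle \tilde\chi_{i,1}^*, \tilde\gamma_{u_{x,1}}\rangle = \delta_{i,x}$. This is the connected-centre analogue of the present proposition and is the essential external input.

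\textbf{Second gap: the torsor matching.} You correctly identify the final matching as the principal obstacle, and indeed your proposed resolution (aligning two torsors for the same group via \eqref{jordan-decomp}) would require substantial additional work. The paper avoids this entirely by \emph{reversing the logic}. Having fixed $u_{x,1}$ and obtained from Clifford theory coset representatives $t_{i,1},\dots,t_{i,|Z_{\bG}(u)^F|}$ of $\Stab_{\tilde G}(\chi_{i,1}^*)$ in $\tilde G$ with $\chi_{i,j}^* = (\chi_{i,1}^*)^{t_{i,j}}$, the paper \emph{defines} $u_{x,y} := t_{x,y}\,u_{x,1}\,t_{x,y}^{-1}$. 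One then checks, using the already-established relation $\langle\chi_{i,j}^*,\gamma_{u_{x,1}}\rangle = \delta_{i,x}\delta_{j,1}$ together with \cref{prop:gggr-conj}, that distinct $t_{i,j}$ send $u_{x,1}$ to distinct $G$-classes (else two constituents of $\Res^{\tilde G}_G(\tilde\chi_{i,1}^*)$ would both pair to $1$ with $\gamma_{u_{x,1}}$). This simultaneously proves $k_i \geqslant |Z_{\bG}(u)^F|$, and a count against $|A_{\bG}(u)^F|$ forces equality --- so $k_i = |Z_{\bG}(u)^F|$ is a \emph{consequence} of the character-theoretic argument, not an input. Your first paragraph's direct proof of this (via \cref{lem:abelian-comp-group}) is therefore unnecessary, which is fortunate since your invocation of that lemma to ``collapse $F$-conjugacy in $A_{\bG}(u)$'' presupposes $A_{\bG}(u)^F = A_{\bG}(u)$, which the well-chosen condition alone does not give.
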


\begin{proof}
Recall that $u \in \mathcal{O}^F$ is a class representative such that $A_{\btG}(u)^F = A_{\btG}(u)$. The $\tilde{G}$-classes in $\mathcal{O}^F$ are in bijection with the $F$-conjugacy classes in $A_{\btG}(u)$, therefore $d=|A_{\btG}(u)|$ by \cite[Exemple 1.1]{bonnafe:2006:sln}. Let $\mathcal{\tilde{F}} \subseteq \mathcal{E}(\tilde{G},\tilde{s})$ be the family of characters which, under the map in \eqref{eq:bijection} is in bijection with the family of characters containing $\psi$.

By \cite[Proposition 4.3]{geck-hezard:2008:unipotent-support}, which we can use as \ref{P1} and \ref{P3} hold, there exist irreducible characters $\tilde{\chi}_{1,1},\dots,\tilde{\chi}_{d,1} \in \tilde{\mathcal{F}}$ such that $\langle \tilde{\chi}_{i,1}^*,\tilde{\gamma}_{u_{x,1}}\rangle = \delta_{i,x}$ for all $1 \leqslant i,x \leqslant d$. Using \cref{lem:induction-of-GGGRs} followed by Frobenius reciprocity we see that
\begin{equation*}
\delta_{i,x} = \langle \tilde{\chi}_{i,1}^*,\tilde{\gamma}_{u_{x,1}}\rangle_{\tilde{G}} = \langle \tilde{\chi}_{i,1}^*,\Ind_G^{\tilde{G}}(\gamma_{u_{x,1}})\rangle_{\tilde{G}} = \langle \Res_G^{\tilde{G}}(\tilde{\chi}_{i,1}^*),\gamma_{u_{x,1}}\rangle_G.
\end{equation*}
As $A_{\bG}(u)$ is abelian we know \eqref{eq:clubsuit} holds, in particular using \cref{cor:dual-restrict} we know the restriction has the following decomposition into irreducible characters
\begin{equation*}
\Res_G^{\tilde{G}}(\tilde{\chi}_{i,1}^*) = \chi_{i,1}^* + \cdots + \chi_{i,|Z_{\bG}(u)^F|}^* \Rightarrow \delta_{i,x} = \sum_{j=1}^{|Z_{\bG}(u)^F|} \langle \chi_{i,j}^*,\gamma_{u_{x,1}} \rangle.
\end{equation*}
Without loss of generality we may assume the labelling to be such that $\langle \chi_{i,j}^*, \gamma_{u_{x,1}} \rangle = \delta_{i,x}\delta_{j,1}$. We will write $\Stab_{\tilde{G}}(\chi_{i,1}^*)$ for the stabiliser of $\chi_{i,1}^*$ in $\tilde{G}$ under the natural conjugation action of $\tilde{G}$ on $\Irr(G)$. By Clifford theory and the remark before \cref{prop:gggr-conj} there exists a set $\{t_{i,1},\dots,t_{i,|Z_{\bG}(u)^F|}\} \subseteq \tilde{T}$, (which can be completed to form a left transversal of $\Stab_{\tilde{G}}(\chi_{i,1}^*)$ in $\tilde{G}$), such that $\chi_{i,j}^* := {\chi_{i,1}^*}^{t_{i,j}}$ satisfies the condition $\chi_{i,j}^* = \chi_{i,k}^*$ if and only if $j = k$. We assume for convenience that $t_{i,1}$ is the identity for all $i$. As conjugation by elements of $\tilde{G}$ is an isometry of the space of all class functions
\begin{equation*}
\delta_{i,x} = \langle {\chi_{i,1}^*}^{t_{i,j}}, \gamma_{u_{x,1}}^{t_{i,j}} \rangle = \langle \chi_{i,j}^*, \gamma_{t_{i,j}(u_{x,1})t_{i,j}^{-1}}\rangle.
\end{equation*}

We claim that if $j$, $k$ are distinct indices then we cannot have $t_{i,j}(u_{x,1})t_{i,j}^{-1}$ and $t_{i,k}(u_{x,1})t_{i,k}^{-1}$ are in the same $G$-class. If they were in the same $G$-class then we would have $\gamma_{u_{x,1}}^{t_{i,j}} = \gamma_{u_{x,1}}^{t_{i,k}}$ by \cref{prop:gggr-conj}, which would mean
\begin{equation*}
\delta_{i,x} = \langle {\chi_{i,1}^*}^{t_{i,k}}, \gamma_{u_{x,1}}^{t_{i,j}} \rangle = \langle {\chi_{i,1}^*}^{t_{i,k}t_{i,j}^{-1}}, \gamma_{u_{x,1}} \rangle.
\end{equation*}
However of all the components of $\Res^{\tilde{G}}_G(\tilde{\chi}_{i,1}^*)$ only $\chi_{i,1}^*$ satisfies this property. So $\chi_{i,1}^* = {\chi_{i,1}^*}^{t_{i,k}t_{i,j}^{-1}}$, which would imply $t_{i,k}$ and $t_{i,j}$ have the same image in the quotient $\tilde{G}/\Stab_{\tilde{G}}(\chi_{i,1}^*)$ but this is a contradiction.

This argument shows that there are at least $|Z_{\bG}(u)^F|$ conjugacy classes contained in $\mathcal{\tilde{O}}_i \cap G$ for each $1 \leqslant i \leqslant d$, in other words $|Z_{\bG}(u)^F| \leqslant k_i$. Conversely this clearly gives un an inequality
\begin{equation*}
|A_{\bG}(u)^F| = |Z_{\bG}(u)^F||A_{\tilde{\bG}}(u)| = d|Z_{\bG}(u)^F| \leqslant \sum_{i=1}^{d} k_i = |A_{\bG}(u)^F|
\end{equation*}
so we must have $k_i = |Z_{\bG}(u)^F|$ for all $i$. By this argument we may now redefine our class representatives to be such that, for all $1 \leqslant x \leqslant d$ and $1 \leqslant y \leqslant |Z_{\bG}(u)^F|$, we have $u_{x,y} := t_{x,y}u_{x,1}t_{x,y}^{-1}$. With all this in mind the statement of the proposition is now simple. Indeed we first see that
\begin{equation*}
\langle \chi_{i,j}^*, \gamma_{u_{x,y}} \rangle = \langle {\chi_{i,1}^*}^{t_{i,j}}, \gamma_{u_{x,1}}^{t_{x,y}} \rangle = \langle {\chi_{i,1}^*}^{t_{i,j}t_{x,y}^{-1}}, \gamma_{u_{x,1}} \rangle
\end{equation*}
but it is clear that $\langle {\chi_{i,1}^*}^{t_{i,j}t_{x,y}^{-1}}, \Res_G^{\tilde{G}}(\tilde{\chi}_{i,1}^*) \rangle = 1$, which tells us the above inner product is $0$ unless $i=x$. Now assume $i = x$ then $\langle {\chi_{i,1}^*}^{t_{i,j}t_{i,y}^{-1}}, \gamma_{u_{i,1}} \rangle  = 1$ if and only if ${\chi_{i,1}^*}^{t_{i,j}t_{i,y}^{-1}} = \chi_{i,1}^*$, which is true if and only if $j = y$ so we are done.
\end{proof}

For convenience we restate the above proposition with slightly less cumbersome notation.

\begin{cor}\label{cor:conj-abelian-comp}
Let $\mathcal{O}$ be an $F$-stable unipotent class of $\bG$ and assume $A_{\bG}(u)$ is abelian. Let $d' := |A_{\bG}(u)^F|$ and denote by $\mathcal{O}_i$ the $G$-classes such that $\mathcal{O}^F = \mathcal{O}_1\sqcup \cdots \sqcup \mathcal{O}_{d'}$, furthermore let $u_i \in \mathcal{O}_i$ denote a class representative. Consider the pair $(\tilde{s},\psi)$ prescribed by \cref{prop:A} then there exist irreducible characters $\chi_i \in \mathcal{E}(G,s)$ such that $\langle \chi_i^*, \gamma_{u_j} \rangle = \delta_{i,j}$, for all $1 \leqslant i,j \leqslant d'$.
\end{cor}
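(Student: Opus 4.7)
The plan is to deduce this corollary directly from Proposition \ref{prop:inner-prod} by flattening its double-indexing into a single index. The conceptual content has already been established; what remains is to check that the two parametrisations of the $G$-classes inside $\mathcal{O}^F$ have the same cardinality and then to rename the characters accordingly.

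First I would verify the class count. In the notation of Proposition \ref{prop:inner-prod}, we have the decomposition $\mathcal{O}^F = \bigsqcup_{i=1}^d \tilde{\mathcal{O}}_i$ with $d = |A_{\btG}(u)|$, and each intersection $\tilde{\mathcal{O}}_i \cap G$ further decomposes into $k_i$ distinct $G$-classes. The proposition establishes $k_i = |Z_{\bG}(u)^F|$ independent of $i$, so the total number of $G$-classes in $\mathcal{O}^F$ equals $d \cdot |Z_{\bG}(u)^F| = |A_{\btG}(u)|\cdot |Z_{\bG}(u)^F|$. Because $u$ is well chosen, this product is exactly $|A_{\bG}(u)^F| = d'$, matching the single-indexed enumeration $\mathcal{O}^F = \mathcal{O}_1 \sqcup \cdots \sqcup \mathcal{O}_{d'}$ of the corollary.

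Next I would fix any bijection $\phi : \{1,\dots,d'\} \to \{(i,j) \mid 1 \leqslant i \leqslant d,\ 1 \leqslant j \leqslant k_i\}$, and set $u_\ell := u_{\phi(\ell)}$ and $\chi_\ell := \chi_{\phi(\ell)}$, where $u_{i,j}$ and $\chi_{i,j}$ are the representatives and characters produced by Proposition \ref{prop:inner-prod}. Since the matrix of inner products $(\langle \chi_{i,j}^*, \gamma_{u_{x,y}}\rangle)$ is the identity under any fixed ordering of its index set, the orthogonality $\langle \chi_\ell^*, \gamma_{u_m}\rangle = \delta_{\ell,m}$ follows at once. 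There is no real obstacle here: all of the representation-theoretic work (Clifford theory, the multiplicity-freeness of $\Res^{\tilde G}_G$, \cref{prop:gggr-conj}, and condition \eqref{eq:clubsuit}) has already been consumed inside Proposition \ref{prop:inner-prod}, and the corollary is purely a convenient repackaging aimed at feeding \cref{lem:matrix-mult-invert}.
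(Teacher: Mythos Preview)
Your proposal is correct and matches the paper's approach exactly: the paper introduces this corollary with the sentence ``For convenience we restate the above proposition with slightly less cumbersome notation'' and gives no separate proof, so the re-indexing argument you spell out is precisely the intended justification. Your verification that $d\cdot|Z_{\bG}(u)^F| = |A_{\btG}(u)|\cdot|Z_{\bG}(u)^F| = |A_{\bG}(u)^F| = d'$ via the well-chosen hypothesis is the only thing one needs to check, and you have done so.
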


\begin{rem}
Let us relax the condition that $A_{\bG}(u)$ is abelian in the above result but instead assume $A_{\tilde{\bG}}(u)$ is abelian. In particular let $\bG$ be a spin / half spin group and $u$ be such that $Z_{\bG}(u) = Z(\bG)$ then we are in the following situation. The family $\tilde{\mathcal{F}}$ described in the proof of \cref{prop:inner-prod} has order $m^2$, where $m$ is a power of 2. The group $A_{\btG}(u)$ is abelian so we may use \cite[Proposition 4.3]{geck-hezard:2008:unipotent-support}, as in the above proof, to obtain $m$ characters in $\tilde{\mathcal{F}}$ satisfying the inner product condition. In this situation \eqref{eq:clubsuit} fails and in fact there are \emph{only} $m$ characters in $\mathcal{\tilde{F}}$ whose restriction to $G$ contains the correct number of irreducible constituents, (see \cite[Propositions 9.3 and 11.10]{taylor:2012:finding-characters-satisfying}). The problem is that it is not clear that the characters provided by the result of Geck and H\'{e}zard coincide with the characters which have the correct restriction from $\tilde{G}$ to $G$. This is the key obstruction to showing Kawanaka's conjecture holds for spin / half spin groups.
\end{rem}

One way to fix this problem would be to make sure that the character sheaves used in the proof of \cite[Corollary 3.5]{geck-hezard:2008:unipotent-support} can be chosen to have the same labelling as the irreducible characters in $\mathcal{\tilde{F}}$ whose restriction to $G$ is not irreducible. The author considered this in the case where $\bG$ is a spin group of type $\B_n$ by trying to adapt the explicit results obtained in \cite{lusztig:1986:on-the-character-values}. However it seems that trying to understand the correspondence between the two labellings is significantly complicated.

\section{Kawanaka's Conjecture}\label{sec:conjecture}
The result in the previous section will be crucial in determining Kawanaka's conjecture when $\bG$ is of classical type. If $\bG$ is of exceptional type and $Z(\bG)$ is disconnected then we must have $\bG$ is of type $\E_6$ or $\E_7$. In these groups there are only three classes such that $A_{\bG}(u)$ is non-abelian and they are all such that $A_{\btG}(u) \cong \mathfrak{S}_3$. Assume $\mathcal{O}$ is one of these three classes and $(\tilde{s},\psi)$ is the pair prescribed by \cref{prop:A}.

Write $\mathcal{\tilde{O}}_1$, $\mathcal{\tilde{O}}_2$ and $\mathcal{\tilde{O}}_3$ for the $\tilde{G}$-classes such that $\mathcal{O}^F = \mathcal{\tilde{O}}_1 \sqcup \mathcal{\tilde{O}}_2 \sqcup \mathcal{\tilde{O}}_3$ and fix representatives $u_i \in \mathcal{\tilde{O}}_i \cap G$. The matrix of multiplicities between the irreducible characters in $\mathcal{E}(\tilde{G},\tilde{s})$ and the Alvis--Curtis duals of the characters of the GGGRs is given in \cite[Proposition 6.7]{geck:1999:character-sheaves-and-GGGRs}. Using this table we see that there exist three characters $\tilde{\chi}_1$, $\tilde{\chi}_2$, $\tilde{\chi}_3 \in \Irr(\tilde{G})$ such that $\langle D_{\tilde{G}}(\tilde{\chi}_i),\tilde{\gamma}_{u_j}\rangle = \langle\tilde{\chi}_i,D_{\tilde{G}}(\tilde{\gamma}_{u_j})\rangle = \delta_{i,j}$. In particular $\epsilon_{\tilde{\chi}} = 1$ for all $\tilde{\chi} \in \mathcal{E}(\tilde{G},\tilde{s})$, which means $\langle\tilde{\chi}_i^*,\tilde{\gamma}_{u_j}\rangle = \delta_{i,j}$. Two out of three of the classes satisfy $A_{\btG}(u) \cong A_{\bG}(u)$. If this is the case then $\mathcal{O}_i := \mathcal{\tilde{O}}_i \cap G$ is a single $G$-class and $\mathcal{O}^F = \mathcal{O}_1\sqcup\mathcal{O}_2\sqcup\mathcal{O}_3$. Let $u_i \in \mathcal{O}_i$ denote a $G$-class representative then as \eqref{eq:clubsuit} holds we know $\chi_i := \Res_G^{\tilde{G}}(\tilde{\chi}_i)$ is irreducible. In particular for each $1 \leqslant i,j \leqslant 3$ we have $\langle\chi_i^*,\gamma_{u_j}\rangle = \delta_{i,j}$.

For the remaining class we have $|A_{\bG}(u)| = 2|A_{\btG}(u)|$. The number of $F$-conjugacy classes in $A_{\bG}(u)$ is equal to $6 = |A_{\bG}(u)^F|$. We know \eqref{eq:clubsuit} holds for this class, so for each $1 \leqslant i \leqslant 3$ we have $\Res_G^{\tilde{G}}(\tilde{\chi}_i) = \chi_{i,1} + \chi_{i,2}$ where $\chi_{i,j} \in \Irr(G)$. Using the techniques in the proof of \cref{prop:inner-prod} it is easy to see that each of the three $\tilde{G}$-classes $\mathcal{\tilde{O}}_i$ is such that $\mathcal{\tilde{O}}_i^F \cap G = \mathcal{O}_{i,1}\sqcup \mathcal{O}_{i,2}$, where $\mathcal{O}_{i,j}$ is a $G$-class for $1 \leqslant j \leqslant 2$. Let us choose class representatives $u_{x,y} \in \mathcal{O}_{x,y}$ for each $1 \leqslant x \leqslant 3$ and $1 \leqslant y \leqslant 2$ then the following relation holds $\langle\chi_{i,j}^*, \gamma_{u_{x,y}}\rangle = \delta_{i,x}\delta_{j,y}$. In particular we have the following corollary.

\begin{cor}\label{cor:conj-S3-comp}
Let $\mathcal{O}$ be an $F$-stable unipotent class of $\bG$ and assume $A_{\btG}(u) \cong \mathfrak{S}_3$. Let us write $\mathcal{O}_i$, for $1 \leqslant i \leqslant d'$ for the $G$-classes such that $\mathcal{O}^F = \mathcal{O}_1\sqcup \cdots \sqcup \mathcal{O}_{d'}$ and let $u_i \in \mathcal{O}_i$ denote a class representative. Consider the pair $(\tilde{s},\psi)$ prescribed by \cref{prop:A} then there exist irreducible characters $\chi_i \in \mathcal{E}(G,s)$ such that $\langle \chi_i^*, \gamma_{u_j} \rangle = \delta_{i,j}$ for all $1 \leqslant i,j \leqslant d'$.
\end{cor}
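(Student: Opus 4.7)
The corollary is essentially a repackaging of the case analysis carried out in the two paragraphs immediately preceding it, so my plan is to assemble that discussion into a clean argument rather than start from scratch. I would begin by invoking \cite[Proposition 6.7]{geck:1999:character-sheaves-and-GGGRs}, which provides an explicit multiplicity table for $\mathcal{E}(\tilde{G},\tilde{s})$ and the Alvis--Curtis duals of the GGGRs associated to the three $\tilde{G}$-classes making up $\mathcal{O}^F$. From this table I extract three irreducible characters $\tilde{\chi}_1,\tilde{\chi}_2,\tilde{\chi}_3 \in \mathcal{E}(\tilde{G},\tilde{s})$ such that $\langle D_{\tilde{G}}(\tilde{\chi}_i),\tilde{\gamma}_{u_j}\rangle = \delta_{i,j}$, and I record that every $\tilde{\chi} \in \mathcal{E}(\tilde{G},\tilde{s})$ has $\epsilon_{\tilde{\chi}} = 1$, so that the identity may be written as $\langle \tilde{\chi}_i^*,\tilde{\gamma}_{u_j}\rangle = \delta_{i,j}$.

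Next I split into the two subcases dictated by the map $A_{\bG}(u) \twoheadrightarrow A_{\btG}(u)$. In the case where this map is an isomorphism, each $\tilde{G}$-class $\tilde{\mathcal{O}}_i$ meets $G$ in a single $G$-class so that $d' = 3$, and since \eqref{eq:clubsuit} holds for the pair $(\tilde{s},\psi)$ the restriction $\chi_i := \Res_G^{\tilde{G}}(\tilde{\chi}_i)$ is irreducible. Combining \cref{lem:induction-of-GGGRs} with Frobenius reciprocity and \cref{cor:dual-restrict} then yields $\langle \chi_i^*,\gamma_{u_j}\rangle = \langle \tilde{\chi}_i^*,\tilde{\gamma}_{u_j}\rangle = \delta_{i,j}$, exactly as desired. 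In the case where $|A_{\bG}(u)| = 2|A_{\btG}(u)|$, so that $d' = 6$, each $\tilde{\chi}_i$ restricts as $\chi_{i,1}^* + \chi_{i,2}^*$ (using \eqref{eq:clubsuit} and \cref{cor:dual-restrict}) and I rerun the Clifford-theoretic bookkeeping from the proof of \cref{prop:inner-prod}: I choose a transversal $t_{i,j} \in \tilde{T}$ of $\Stab_{\tilde{G}}(\chi_{i,1}^*)$ in $\tilde{G}$, set $u_{i,j} := t_{i,j}u_{i,1}t_{i,j}^{-1}$, and use \cref{prop:gggr-conj} together with the isometry property of conjugation to conclude $\langle \chi_{i,j}^*,\gamma_{u_{x,y}}\rangle = \delta_{i,x}\delta_{j,y}$. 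Relabelling the two-index families by a single index $1 \leqslant i \leqslant d'$ gives the stated conclusion.

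The argument is therefore almost entirely assembly; the only genuine point requiring care will be the Clifford step in the second subcase, where I must verify (as in \cref{prop:inner-prod}) that conjugation by distinct coset representatives $t_{i,j}t_{i,k}^{-1}$ produces \emph{distinct} $G$-classes inside $\tilde{\mathcal{O}}_i \cap G$, so that the six pairs $(\chi_{i,j},u_{i,j})$ really are dual to one another and nothing collapses. This is where the uniqueness of $\chi_{i,1}^*$ as the constituent of $\Res_G^{\tilde{G}}(\tilde{\chi}_i^*)$ pairing non-trivially with $\gamma_{u_{i,1}}$ is used, and the counting argument $d' = 6 = |A_{\bG}(u)^F|$ forces equality rather than a strict inequality. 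Once this is in place, the re-indexing is cosmetic and the corollary follows.
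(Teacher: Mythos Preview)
Your proposal is correct and follows essentially the same route as the paper: the corollary is indeed nothing more than a relabelled summary of the two preceding paragraphs, and you have reproduced that discussion faithfully, including the appeal to \cite[Proposition 6.7]{geck:1999:character-sheaves-and-GGGRs}, the split into the two subcases $A_{\bG}(u)\cong A_{\btG}(u)$ and $|A_{\bG}(u)|=2|A_{\btG}(u)|$, and the reuse of the Clifford-theoretic argument from \cref{prop:inner-prod} in the second subcase. The point you flag as requiring care --- that distinct coset representatives yield distinct $G$-classes inside each $\tilde{\mathcal{O}}_i\cap G$ --- is exactly the point the paper also isolates, and your justification matches the paper's.
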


We have now gathered all the preliminary information that we need to prove the following theorem.

\begin{thm}
Assume $p$, $q$ are large enough and $\bG$ is a simple algebraic group, which is not a spin or half-spin group, then \cref{conj:kawanaka} holds.
\end{thm}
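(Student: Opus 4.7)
The plan is to apply \cref{lem:matrix-mult-invert}, so it suffices to exhibit virtual characters $\chi_1,\dots,\chi_d$ of $G$, one for each unipotent conjugacy class of $G$, such that the matrix of inner products $M := (\langle \chi_i, \gamma_{u_j}\rangle_G)$ is invertible over $\mathbb{Z}$. When $\bG$ is adjoint this is already \cite[Theorem 4.5]{geck-hezard:2008:unipotent-support}, so I would assume $Z(\bG)$ is disconnected.

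I would group the unipotent classes of $G$ according to the $F$-stable unipotent class of $\bG$ in which they lie, and order these classes of $\bG$ by decreasing dimension, refining the order within each dimension so that $\mathcal{O}$ precedes $\mathcal{O}'$ whenever $\mathcal{O}' \subsetneq \overline{\mathcal{O}}$. For each $F$-stable unipotent class $\mathcal{O}$ of $\bG$, \cref{prop:order-comp-grp} supplies a well-chosen representative $u \in \mathcal{O}^F$, which may be taken to lie in $U$, and \cref{prop:A} produces an associated pair $(\tilde{s},\psi)$. Since $\bG$ is neither a spin nor a half-spin group, the component group $A_{\bG}(u)$ is either abelian or, failing that, $\bG$ is of exceptional type $\E_6$ or $\E_7$ with $A_{\btG}(u)\cong \mathfrak{S}_3$. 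In the abelian case \cref{cor:conj-abelian-comp} produces irreducible characters $\chi_{\mathcal{O},i} \in \mathcal{E}(G,s)$, indexed by the $G$-classes in $\mathcal{O}^F$, satisfying $\langle \chi_{\mathcal{O},i}^*, \gamma_{u_{\mathcal{O},j}} \rangle = \delta_{i,j}$; in the remaining exceptional cases \cref{cor:conj-S3-comp} provides the same relation.

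Taking the $\chi_{\mathcal{O},i}^*$ as the rows of $M$ (matched columnwise by the corresponding GGGRs), I would argue that $M$ is block upper triangular with identity diagonal blocks. The diagonal blocks are identity by the delta relations just recorded. For the off-diagonal vanishing, each $\chi_{\mathcal{O},i}$ has unipotent support $\mathcal{O}$ by \cref{thm:A}, and the standard closure-order property of GGGRs (as in \cite[\S 11]{lusztig:1992:a-unipotent-support} and the argument of \cite[\S 4]{geck-hezard:2008:unipotent-support}) forces $\langle \chi_{\mathcal{O},i}^*, \gamma_u\rangle_G = 0$ whenever the $\bG$-class of $u$ fails to lie in $\overline{\mathcal{O}}$ or has the same dimension as $\mathcal{O}$ but is distinct from it. With the ordering above, all non-vanishing off-diagonal entries sit strictly below the diagonal, so $M$ is invertible over $\mathbb{Z}$ and \cref{conj:kawanaka} follows.

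The substantive obstacle has already been resolved in the earlier sections rather than here: what is needed is that the characters supplied by the Geck--H\'{e}zard argument for $\tilde{G}$ restrict correctly to $G$ under the Jordan decomposition of \eqref{jordan-decomp}, which is precisely the content of condition \eqref{eq:clubsuit} in \cref{prop:A}. The exclusion of spin and half-spin groups is exactly what guarantees that \eqref{eq:clubsuit} holds throughout, avoiding the mismatch between character sheaves and restricted characters described in the remark following \cref{cor:conj-abelian-comp}; in the excluded cases there is genuinely no known way to reconcile the two labellings.
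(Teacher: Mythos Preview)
Your proposal is correct and follows essentially the same route as the paper: reduce to the disconnected-centre case, invoke \cref{cor:conj-abelian-comp} and \cref{cor:conj-S3-comp} to obtain identity diagonal blocks, and then appeal to the closure argument of \cite[\S 4]{geck-hezard:2008:unipotent-support} for block triangularity, concluding via \cref{lem:matrix-mult-invert}. One small slip: the relevant vanishing from \cite[\S 11]{lusztig:1992:a-unipotent-support} is that $\langle \chi_{\mathcal{O}}^*,\gamma_u\rangle \neq 0$ forces $\mathcal{O} \subseteq \overline{(u)_{\bG}}$ (not $(u)_{\bG} \subseteq \overline{\mathcal{O}}$), so with your decreasing-dimension ordering the matrix is block \emph{lower} triangular rather than upper --- but this does not affect invertibility, and your final sentence about non-vanishing entries lying below the diagonal is in fact the correct statement.
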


\begin{proof}
If $\bG$ is adjoint then this is just the result obtained by Geck and H\'{e}zard so we may assume that $\bG$ has a disconnected centre. Let $\mathcal{O}_1,\dots,\mathcal{O}_d$ be the distinct $F$-stable unipotent classes of $\bG$ and for each $1\leqslant i \leqslant d$ let $(\tilde{s}_i,\psi_i)$ be the pair prescribed by \cref{prop:A} for $\mathcal{O}_i$. For each $1 \leqslant i \leqslant d$ we write $\mathcal{O}_{i,1},\dots,\mathcal{O}_{i,k_i}$ for the $G$-conjugacy classes such that $\mathcal{O}_i^F = \mathcal{O}_{i,1}\sqcup\dots\sqcup\mathcal{O}_{i,k_i}$. By \cref{cor:conj-abelian-comp} and \cref{cor:conj-S3-comp} we can find irreducible characters $\{\chi_{i,1},\dots,\chi_{i,k_i}\} \subseteq \mathcal{E}(G,s_i)$ such that $\langle \chi_{i,j}^*, \gamma_{u_{i,y}} \rangle = \delta_{j,y}$. In particular the matrix of multiplicities $(\langle\chi_{i,j}^*,\gamma_{u_{x,y}}\rangle)$, (where we have $1\leqslant i,x \leqslant d$, $1\leqslant j \leqslant k_i$, $1\leqslant y \leqslant k_x$), is a square block matrix with identity matrices on the diagonal. We may now argue as in the proof of \cite[Theorem 4.5]{geck-hezard:2008:unipotent-support} to show that all blocks in the lower triangular part of this matrix are zero. In particular this matrix of multiplicities is invertible over $\mathbb{Z}$ so by \cref{lem:matrix-mult-invert} Kawanaka's conjecture holds.
\end{proof}

\begin{rem}
In \cite[Proposition 4.6]{geck-hezard:2008:unipotent-support} Geck and H\'{e}zard give a characterisation of GGGRs in terms of character values. This characterisation follows as a formal consequence of Kawanaka's conjecture, so this now also holds for all simple groups which are not a spin / half spin group.
\end{rem}

\section{GGGRs and Fourth Roots of Unity}\label{sec:fourth-roots}
In this section we would like to use \cref{thm:A} together with the techniques of \cite[\S 3]{geck:1999:character-sheaves-and-GGGRs} to compute certain fourth roots of unity that arise in connection to GGGRs. We begin by recalling the setup of \cite{geck:1999:character-sheaves-and-GGGRs}. Let $\mathcal{N}_{\bG}$ denote the set of all pairs $\iota = (\mathcal{O}_{\iota},\psi_{\iota})$, where $\mathcal{O}_{\iota}$ is a unipotent conjugacy class of $\bG$ and $\psi_{\iota} \in \Irr(A_{\bG}(u))$ for $u \in \mathcal{O}_{\iota}$. We say a pair $\iota$ is $F$-stable if $F(\mathcal{O}_{\iota}) = \mathcal{O}_{\iota}$ and $\psi_{\iota}\circ F = \psi_{\iota}$. Here we implicitly assume $u \in \mathcal{O}^F$ so that $F$ induces an automorphism of $A_{\bG}(u)$. We denote the subset of $F$-stable pairs by $\mathcal{N}_{\bG}^F \subseteq \mathcal{N}_{\bG}$.

Assume now that $\iota$ is an $F$-stable pair. As $\psi_{\iota}$ is invariant under the action of $F$ we may extend $\psi_{\iota}$ to a character of the semidirect product $A_{\bG}(u) \rtimes \langle F \rangle$, where $F$ acts as a cyclic group of automorphisms. Let $\tilde{\psi}_{\iota}$ be a fixed choice of such an extension. For each $x \in A_{\bG}(u)$ we write $u_x$ for an element of $\mathcal{O}_{\iota}^F$ obtained by twisting $u$ with the element $x \in A_{\bG}(u)$. We then define
\begin{equation*}
Y_{\iota}(g) = \begin{cases}
\tilde{\psi}_{\iota}(xF) &\text{if }g = u_x\text{ for some }x \in A_{\bG}(u),\\
0 &\text{otherwise}
\end{cases}
\end{equation*}
for all $g \in G$. The function $Y_{\iota}$ is a $G$-class function and the set $\mathcal{Y} = \{Y_{\iota} \mid \iota \in \mathcal{N}_{\bG}^F\}$ forms a basis for the space of all unipotently supported class functions of $G$.

Let $\iota \in \mathcal{N}_{\bG}^F$ be an $F$-stable pair and let $u_1,\dots,u_d \in \mathcal{O}_{\iota}^F$ be class representatives for the $G$-classes contained in $\mathcal{O}_{\iota}^F$. We define, as in \cite[7.5]{lusztig:1992:a-unipotent-support}, for any pair $\iota \in \mathcal{N}_{\bG}^F$ the $G$-class function
\begin{equation*}
\gamma_{\iota} = \sum_{r=1}^d [ A_{\bG}(u_r) : A_{\bG}(u_r)^F]Y_{\iota}(u_r)\gamma_{u_r}.
\end{equation*}
The set $\mathcal{X} = \{\gamma_{\iota} \mid \iota \in \mathcal{N}_{\bG}^F\}$ is then also a basis for the space of all unipotently supported class functions of $G$.

Recall that in \cite{lusztig:1984:intersection-cohomology-complexes} Lusztig has associated to every pair $\iota \in \mathcal{N}_{\bG}$ a unique pair $(\bL_{\iota},\upsilon_{\iota})$, up to $\bG$ conjugacy, where $\bL_{\iota}$ is a Levi subgroup of $\bG$ and $\upsilon_{\iota} \in \mathcal{N}_{\bL_{\iota}}$ is a cuspidal pair. This is known as the generalised Springer correspondence. Lusztig has shown that we have a disjoint union
\begin{equation*}
\mathcal{N}_{\bG} = \bigsqcup_{(\bL,\upsilon)} \bl{I}(\bL,\upsilon) \qquad\text{where}\qquad \bl{I}(\bL,\upsilon) = \{\iota \in \mathcal{N}_{\bG} \mid (\bL_{\iota},\upsilon_{\iota}) = (\bL,\upsilon)\}.
\end{equation*}
We call $\bl{I}(\bL,\upsilon)$ a \emph{block} of $\mathcal{N}_{\bG}$. A pair $\iota \in \mathcal{N}_{\bG}$ is a cuspidal pair if and only if it lies in a block of cardinality 1. To each pair $\iota \in \mathcal{N}_{\bG}$ we also assign the following value
\begin{equation*}
b_{\iota} = \frac{1}{2}(\dim \bG - \dim \mathcal{O}_{\iota} - \dim Z(\bL_{\iota})^{\circ}).
\end{equation*}

In \cite[Theorem 7.3]{lusztig:1992:a-unipotent-support} Lusztig explicitly constructs the change of basis from $\mathcal{X}$ to $\mathcal{Y}$. The expression of the function $\gamma_{\iota} \in \mathcal{X}$ in terms of elements of $\mathcal{Y}$ involves an unknown fourth root of unity $\zeta_{\iota}$. This root of unity is defined in \cite[Proposition 7.2]{lusztig:1992:a-unipotent-support} and it is shown there that $\zeta_{\iota} = \zeta_{\iota'}$ whenever $\iota,\iota'$ belong to the same block. Following \cite[8.4]{lusztig:1992:a-unipotent-support} to each $\iota$ we define $\delta_{\iota} = (-1)^{\rank(\bL_{\iota}/Z(\bL_{\iota}))}$ and $\zeta_{\iota}' = \delta_{\iota}\zeta_{\iota}^{-1}$. As $\delta_{\iota}$ and $\zeta_{\iota}$ only depend on the block containing $\iota$ the same must be true for $\zeta_{\iota}'$. Moreover we have the following more precise statement.

\begin{lem}[Lusztig, {\cite[Proposition 7.2]{lusztig:1992:a-unipotent-support}}]
Let $\bl{I}(\bL,\upsilon)$ be a block of $\mathcal{N}_{\bG}$ and assume that $\upsilon \in \mathcal{N}_{\bL}^F$. If $\zeta_{\upsilon}'$ is the fourth root of unity associated to $\upsilon$ in $\bL^F$ then $\zeta_{\iota}' = \zeta_{\upsilon}'$ for all $\iota \in \bl{I}(\bL,\upsilon)$.
\end{lem}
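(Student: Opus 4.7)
My plan is to derive this statement directly from the block-constancy of $\zeta_\iota$ supplied by \cite[Proposition 7.2]{lusztig:1992:a-unipotent-support}, by making the comparison between the computations in $\bG^F$ and in $\bL^F$ intrinsic to the cuspidal datum $(\bL,\upsilon)$. The first step is to dispose of the sign factor. Every $\iota \in \bl{I}(\bL,\upsilon)$ satisfies $\bL_\iota = \bL$ by definition of a block, so $\delta_\iota = (-1)^{\rank(\bL/Z(\bL))}$ is constant on the block. Applied inside $\bL$, the cuspidal pair $\upsilon$ forms its own singleton block of $\mathcal{N}_{\bL}$, and since the Levi associated to a cuspidal pair is the ambient group itself, $\delta_\upsilon = (-1)^{\rank(\bL/Z(\bL))}$ as well. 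Thus $\delta_\iota = \delta_\upsilon$, and proving $\zeta'_\iota = \zeta'_\upsilon$ reduces to proving $\zeta_\iota = \zeta_\upsilon$.

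For the equality of the raw fourth roots I would invoke the cited Proposition 7.2 twice. Applied inside $\bG$ it says that $\zeta_\iota$ depends only on the block $\bl{I}(\bL,\upsilon)$, and applied inside $\bL$ it says that $\zeta_\upsilon$ is the value attached to the singleton block $\{\upsilon\}$ of $\mathcal{N}_{\bL}$. It remains to identify these two block invariants, for which I would use the fact that the characteristic functions of the character sheaves in $\bl{I}(\bL,\upsilon)$ are obtained by Lusztig induction from the cuspidal sheaf attached to $\upsilon$, while the class functions $\gamma_\iota$ are similarly induced from $\gamma_\upsilon$. Comparing the leading Fourier coefficients on $\mathcal{O}_\iota^F$ in $\bG^F$ with the corresponding ones on $\mathcal{O}_\upsilon^F$ in $\bL^F$ therefore yields $\zeta_\iota = \zeta_\upsilon$, completing the argument.

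The main obstacle will be the careful matching of normalisations in the second step: one must verify that the leading Fourier coefficient appearing in the definition of $\zeta_\iota$ in $\bG^F$ is the precise Lusztig induction of the corresponding coefficient in $\bL^F$, with no stray sign or scalar absorbed by the twisting representative used to build $Y_\iota$. This is intrinsic to Lusztig's construction of the generalised Springer correspondence, so one expects it to go through, but it is the only place where nontrivial bookkeeping is required.
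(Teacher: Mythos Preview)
The paper does not give its own proof of this lemma: it is stated with attribution to Lusztig and the citation \cite[Proposition~7.2]{lusztig:1992:a-unipotent-support}, and the text moves on immediately. So there is no argument in the paper to compare your proposal against; the author is simply quoting the result.

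As for your sketch on its own merits: the first step is entirely correct and routine. Since $\bL_\iota = \bL$ for every $\iota$ in the block, and the Levi attached to the cuspidal pair $\upsilon$ inside $\bL$ is $\bL$ itself, one has $\delta_\iota = (-1)^{\rank(\bL/Z(\bL))} = \delta_\upsilon$, so the problem does reduce to $\zeta_\iota = \zeta_\upsilon$. Your second step, however, is where all the content lies, and here your proposal is more of a plan than a proof. The block-constancy of $\zeta_\iota$ inside $\bG$ (which the paper already records just before the lemma) does not by itself identify that constant with the value $\zeta_\upsilon$ computed inside $\bL^F$; this identification is precisely what Lusztig establishes in the cited proposition, and it does require the compatibility of the relevant intersection-cohomology complexes and their Frobenius eigenvalues under parabolic induction. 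You correctly flag the matching of normalisations as the crux, but carrying it out amounts to reproducing Lusztig's argument rather than deriving the lemma as a formal consequence of something already stated in the present paper.
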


There is one extreme case where we always know the value $\zeta_{\iota}'$. If $\bL_{\iota}$ is a maximal torus then from the definitions it is clear that $\zeta_{\iota}' = 1$. The results of \cite{geck:1999:character-sheaves-and-GGGRs} are based on the following two axioms, (these are known to be true when $p$ and $q$ are sufficiently large).
\begin{enumerate}[label=(A\arabic*)]
	\item Let $\iota,\iota' \in \mathcal{N}_{\bG}^F$ be such that $\mathcal{O}_{\iota} = \mathcal{O}_{\iota'}$ then $\langle D_G(\gamma_{\iota}), Y_{\iota'}\rangle = |A_{\bG}(u)|\zeta_{\iota}'q^{-b_{\iota}}\delta_{\iota,\iota'}$, where $\delta_{\iota,\iota'}$ denotes the Kronecker delta.\label{ax:1}
	\item If $D_G(\gamma_{\iota})(x) \neq 0$ for some $x \in G$ then $\langle x\rangle \leqslant \mathcal{O}_{\iota}$ where $\langle x \rangle$ is the $G$-conjugacy class containing $x$.\label{ax:2}
\end{enumerate}
As we will use the work of Geck we will also take these as axioms upon which our arguments are built. Our main focus in this section is to show that using \cref{thm:A} statements similar to \cite[Theorem 3.8]{geck:1999:character-sheaves-and-GGGRs} can be made for symplectic and special orthogonal groups. To do this we need the following easy extensions of results of Geck.
\begin{prop}\label{prop:GGGR-mult}
Let $\mathcal{O}$ be an $F$-stable unipotent class of $\bG$ and consider the pair $(\tilde{s},\psi)$ prescribed by \cref{prop:A}. Let $d'$, (resp.\ $d$), be the number of $\tilde{G}$, (resp.\ $G$), conjugacy classes contained in $\tilde{\mathcal{O}}^F$. There exists a character $\chi \in \mathcal{E}(G,s)$ and an index $1 \leqslant x \leqslant d$ such that
\begin{equation*}
\langle \chi^*, \gamma_{u_y} \rangle = \delta_{x,y}\qquad\text{and}\qquad \langle \chi^*, \gamma_{\iota} \rangle = \frac{|Z_{\bG}(u)|}{|Z_{\bG}(u)^F|}\cdot\overline{Y_{\iota}(u_x)},
\end{equation*}
for all $\iota \in \mathcal{N}_{\bG}^F$ with $\mathcal{O}_\iota = \mathcal{O}$.
\end{prop}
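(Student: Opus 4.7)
The plan is to reduce the claim to the corresponding statement in $\tilde{G}$ via the Jordan decomposition from \cref{prop:A} and Clifford theory, and then to recover the coefficient $|Z_{\bG}(u)|/|Z_{\bG}(u)^F|$ from the well-chosenness convention adopted in \cref{sec:main-result}.

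First I would invoke \cref{prop:A} to fix a pair $(\tilde{s}, \psi)$ satisfying \ref{P1}--\ref{P3} and the condition \eqref{eq:clubsuit}. Applying \cite[Proposition 4.3]{geck-hezard:2008:unipotent-support} to the connected-centre group $\tilde{G}$, the properties \ref{P1} and \ref{P3} transferred to $\tilde{\psi}$ yield an irreducible character $\tilde{\chi} \in \mathcal{E}(\tilde{G}, \tilde{s})$ and a $\tilde{G}$-class $\tilde{\mathcal{O}}_{i_0} \subseteq \tilde{\mathcal{O}}^F$ with $\langle \tilde{\chi}^*, \tilde{\gamma}_{\tilde{u}_i} \rangle_{\tilde{G}} = \delta_{i_0, i}$ for any chosen system of $\tilde{G}$-class representatives $\tilde{u}_i$ of $\tilde{\mathcal{O}}^F$.

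Next I would restrict to $G$: writing $\Res_G^{\tilde{G}}(\tilde{\chi}^*) = \chi_1^* + \cdots + \chi_k^*$, which is an honest sum of irreducible characters by \cref{cor:dual-restrict} together with Lusztig's multiplicity-free restriction theorem, Clifford theory combined with \ref{P2} identifies $k = |\Stab_{A_{G^\star}(s)}(\psi)| = |Z_{\bG}(u)^F|$. For a $G$-class representative $u_y$ contained in a different $\tilde{G}$-class than $\tilde{\mathcal{O}}_{i_0}$, \cref{lem:induction-of-GGGRs} and Frobenius reciprocity give
\[
\sum_{l=1}^{k} \langle \chi_l^*, \gamma_{u_y} \rangle_G = \langle \tilde{\chi}^*, \tilde{\gamma}_{u_y} \rangle_{\tilde{G}} = 0,
\]
and the positivity of each individual summand (each $\chi_l^*$ is an irreducible character and each $\gamma_{u_y}$ is a character) forces every term to vanish. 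Inside $\tilde{\mathcal{O}}_{i_0}$ the same calculation yields summands totalling $1$, so exactly one constituent pairs non-trivially with each such $u_y$; since \eqref{eq:clubsuit} together with the Jordan bijection \eqref{jordan-decomp} forces the number of $G$-classes in $\tilde{\mathcal{O}}_{i_0}$ to agree with $k$, a pigeonhole argument refined by the $\tilde{T}$-conjugation device from the proof of \cref{prop:inner-prod} (using \cref{prop:gggr-conj} to translate GGGRs along $\tilde{T}$-conjugations) produces an index $x$ and a constituent $\chi$ with $\langle \chi^*, \gamma_{u_y} \rangle_G = \delta_{x,y}$ for every $u_y$ in $\mathcal{O}^F$.

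Finally, I would expand
\[
\gamma_\iota = \sum_{r=1}^{d} [A_{\bG}(u_r) : A_{\bG}(u_r)^F]\, Y_\iota(u_r)\, \gamma_{u_r},
\]
take the inner product with $\chi^*$, and use conjugate-linearity in the second argument together with the Kronecker delta identity already established to obtain
\[
\langle \chi^*, \gamma_\iota \rangle_G = [A_{\bG}(u_x) : A_{\bG}(u_x)^F]\, \overline{Y_\iota(u_x)}.
\]
Because $u_x$ is well-chosen by the standing convention introduced in \cref{sec:main-result}, one has $|A_{\bG}(u_x)| = |Z_{\bG}(u)|\,|A_{\btG}(u)|$ (from the almost direct product sequence \eqref{eq:short-exact-alg}) and $|A_{\bG}(u_x)^F| = |Z_{\bG}(u)^F|\,|A_{\btG}(u)|$, so the index telescopes to the required $|Z_{\bG}(u)|/|Z_{\bG}(u)^F|$. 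The main obstacle in this programme is the existence step in the third paragraph: matching constituents of $\Res \tilde{\chi}^*$ bijectively with the $G$-classes inside $\tilde{\mathcal{O}}_{i_0}$ is exactly what \eqref{eq:clubsuit} furnishes, and outside the cases covered by \cref{prop:A} (the excluded spin and half-spin groups with non-abelian $A_{\bG}(u)$) the argument does not as stated apply.
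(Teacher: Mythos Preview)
Your overall strategy matches the paper closely: lift to $\tilde G$, obtain a single character $\tilde\psi$ with the Kronecker-delta multiplicity against the $\tilde G$-GGGRs (the paper cites \cite[Proposition~3.1]{geck:1999:character-sheaves-and-GGGRs} rather than \cite[Proposition~4.3]{geck-hezard:2008:unipotent-support}, but this is cosmetic), restrict, isolate one constituent $\chi$ via positivity and the $\tilde T$-conjugation device from \cref{prop:inner-prod}, and then expand the definition of $\gamma_\iota$.

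The genuine gap is in your last step. Having reduced to
\[
\langle \chi^*,\gamma_\iota\rangle \;=\; [A_{\bG}(u_x):A_{\bG}(u_x)^F]\,\overline{Y_\iota(u_x)},
\]
you assert that $u_x$ is well chosen ``by the standing convention'' and telescope the index. But that convention fixes a well-chosen representative $u$ of the \emph{geometric} class $\mathcal{O}$; it says nothing about the particular $G$-class representative $u_x$ singled out by the multiplicity condition $\langle\chi^*,\gamma_{u_x}\rangle=1$. For $u_x = {}^g u$ with $g^{-1}F(g)\mapsto x$, the Frobenius action on $A_{\bG}(u_x)$ becomes $a\mapsto xF(a)x^{-1}$, and there is no a~priori reason for $|A_{\bG}(u_x)^F|$ to equal $|Z_{\bG}(u)^F|\,|A_{\btG}(u)|$ (indeed $A_{\btG}(u_x)^F$ identifies with $C_{A_{\btG}(u)}(\bar x)$, which can be strictly smaller when $A_{\btG}(u)$ is non-abelian). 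So the telescoping is not justified as written.

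The paper circumvents this by an independent computation of the coefficient. It invokes \cite[2.8(b)]{geck:1999:character-sheaves-and-GGGRs}, which gives
\[
\langle \chi^*,\gamma_{(\mathcal{O},1)}\rangle \;=\; \frac{|A_{\bG}(u)|}{n_\chi},
\]
and then uses \cref{thm:A} for the well-chosen $u$ to replace $n_\chi$ by $|A_{\bG}(u)^F|=|Z_{\bG}(u)^F|\,|A_{\btG}(u)|$, obtaining $|Z_{\bG}(u)|/|Z_{\bG}(u)^F|$. Comparing this with the $\iota=(\mathcal{O},1)$ instance of your displayed formula (where $Y_{(\mathcal{O},1)}(u_x)=1$) pins down $[A_{\bG}(u_x):A_{\bG}(u_x)^F]$ without ever assuming $u_x$ is well chosen, and the general $\iota$ then follows. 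Replacing your final paragraph by this appeal to \cite[2.8(b)]{geck:1999:character-sheaves-and-GGGRs} fixes the argument; incidentally, the existence step only needs \ref{P2} for $\psi$ itself, so you need not invoke \eqref{eq:clubsuit} here.
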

\begin{proof}
The proof of this result is similar in nature to that of \cref{prop:inner-prod}. Let us adopt the notational conventions introduced immediately before \cref{prop:inner-prod}. Let $\tilde{\psi} \in \mathcal{E}(\tilde{G},\tilde{s})$ be the character corresponding to $\psi$ under the bijection in \eqref{eq:bijection}, then by \cite[Proposition 3.1]{geck:1999:character-sheaves-and-GGGRs} there exists an index $1 \leqslant x \leqslant d'$ such that $\langle \tilde{\psi}^*,\tilde{\gamma}_{u_y} \rangle_{\tilde{G}} = \delta_{x,y}$ for all $1 \leqslant y \leqslant d'$.

Using the same argument as in \cref{prop:inner-prod} we see there is a unique irreducible constituent $\chi$ in $\Res_G^{\tilde{G}}(\tilde{\psi})$ satisfying $\langle \chi^*, \gamma_{u_y} \rangle_G = \delta_{x,y}$ for all $1 \leqslant y \leqslant d$. By \cite[2.8(b)]{geck:1999:character-sheaves-and-GGGRs} we have
\begin{equation*}
\langle \chi^*,\gamma_{(\mathcal{O},1)} \rangle = \frac{|A_{\bG}(u)|}{n_{\chi}} \Rightarrow \langle \chi^*,\gamma_{(\mathcal{O},1)} \rangle = \frac{|A_{\bG}(u)|}{|A_{\bG}(u)^F|} = \frac{|Z_{\bG}(u)|}{|Z_{\bG}(u)^F|}.
\end{equation*}
From the definition of $\gamma_{\iota}$ we conclude that
\begin{equation*}
\langle \chi^*,\gamma_{\iota} \rangle = \left\langle \chi^*,\sum_{r=1}^d \frac{|A_{\bG}(u)|}{|A_{\bG}(u_r)^F|}Y_i(u_r)\gamma_{u_r} \right\rangle = \frac{|Z_{\bG}(u)|}{|Z_{\bG}(u)^F|}\cdot\overline{Y_i(u_x)}
\end{equation*}
because $\chi$ only has non-zero multiplicity in $\gamma_{u_x}$.
\end{proof}

\begin{cor}\label{cor:4th-root-diviser}
Let $\mathcal{O}$, $\chi$ and $x$ be as in \cref{prop:GGGR-mult}. Let $u \in \mathcal{O}^F$ be a class representative and assume either $A_{\bG}(u)$ is abelian or $\A_{\bG}(u)$ is non-abelian and $A_{\bG}(u)^F = A_{\bG}(u)$ then
\begin{equation}\label{eq:div-criterion}
\epsilon_{\chi}\cdot\chi(u) = \frac{1}{|A_{\bG}(u)^F|}\sum_{\iota} \zeta_{\iota}' q^{b_{\iota}}\psi_{\iota}(1)^2,
\end{equation}
where the sum is taken over all $\iota \in \mathcal{N}_{\bG}^F$ with $\mathcal{O}_{\iota} = \mathcal{O}$. In particular the expression on the right hand side is an algebraic integer.
\end{cor}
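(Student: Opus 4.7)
The plan is to adapt Geck's strategy for the analogous identity in the connected-centre setting to the present disconnected-centre context, using Proposition \ref{prop:GGGR-mult} as the crucial input that replaces the ordinary Jordan-decomposition formula there. I take $u = u_x$ throughout, since the right-hand side of the stated identity depends only on the class $\mathcal{O}$. First I would reformulate Proposition \ref{prop:GGGR-mult}: the well chosen identity $|A_{\bG}(u)^F| = |Z_{\bG}(u)^F||A_{\tilde{\bG}}(u)|$ together with $|A_{\bG}(u)| = |Z_{\bG}(u)||A_{\tilde{\bG}}(u)|$ (from the surjection and exactness at $A_{\bG}(u)$ in \eqref{eq:short-exact-alg}) gives $|Z_{\bG}(u)|/|Z_{\bG}(u)^F| = |A_{\bG}(u)|/|A_{\bG}(u)^F|$, so Proposition \ref{prop:GGGR-mult} becomes
\[
\langle \chi^*, \gamma_\iota\rangle = \frac{|A_{\bG}(u)|}{|A_{\bG}(u)^F|}\,\overline{Y_\iota(u_x)},\qquad \mathcal{O}_\iota = \mathcal{O}.
\]
Under the hypothesis on $A_{\bG}(u)$ the preferred extensions $\tilde{\psi}_\iota$ satisfy $Y_\iota(u_x) = \psi_\iota(1)$, and standard orthogonality of the characters of $A_{\bG}(u)\rtimes\langle F\rangle$ on the coset $A_{\bG}(u)F$ yields
\[
\langle Y_\iota, Y_{\iota'}\rangle = \frac{\delta_{\iota,\iota'}}{|C_{\bG}(u)^{\circ F}|}, \qquad \mathbf{1}_{\mathrm{class}(u_x)} = \frac{1}{|A_{\bG}(u)^F|}\sum_\iota \overline{Y_\iota(u_x)}\,Y_\iota.
\]
Combining the first of these with \ref{ax:1} then gives $D_G(\gamma_\iota)|_{\mathcal{O}^F} = |A_{\bG}(u)||C_{\bG}(u)^{\circ F}|\,\zeta_\iota'\, q^{-b_\iota}\,Y_\iota$.

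The main computation is to introduce the test function $\Psi := \sum_{\mathcal{O}_\iota = \mathcal{O}}(\zeta_\iota')^{-1}\,q^{b_\iota}\,\overline{Y_\iota(u_x)}\,\gamma_\iota$ and evaluate $\langle \chi^*, \Psi\rangle$ in two ways. Expanding directly via the reformulated Proposition and $|Y_\iota(u_x)|^2 = \psi_\iota(1)^2$ gives
\[
\langle \chi^*, \Psi\rangle = \frac{|A_{\bG}(u)|}{|A_{\bG}(u)^F|}\sum_\iota \zeta_\iota'\, q^{b_\iota}\,\psi_\iota(1)^2.
\]
On the other hand, using $\chi^* = \epsilon_\chi D_G(\chi)$ and the self-adjointness of $D_G$ one has $\langle \chi^*, \Psi\rangle = \epsilon_\chi\langle \chi, D_G(\Psi)\rangle$; substituting the formula for $D_G(\gamma_\iota)|_{\mathcal{O}^F}$ together with the Fourier inversion of $\mathbf{1}_{\mathrm{class}(u_x)}$ yields $D_G(\Psi)|_{\mathcal{O}^F} = |A_{\bG}(u)|\,|C_G(u_x)|\,\mathbf{1}_{\mathrm{class}(u_x)}$ (using $|C_G(u)| = |C_{\bG}(u)^{\circ F}||A_{\bG}(u)^F|$), so that $\langle \chi, D_G(\Psi)\rangle = |A_{\bG}(u)|\chi(u_x) + R$, where $R$ is the contribution of $D_G(\Psi)$ on classes strictly smaller than $\mathcal{O}$. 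Equating the two expressions and dividing by $|A_{\bG}(u)|$ produces the stated formula modulo $R$, and the algebraic integer claim then follows immediately, since $\epsilon_\chi\chi(u)$ is a character value.

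The hard part will be proving $R = 0$, i.e.\ that the restriction of $D_G(\Psi)$ to $\overline{\mathcal{O}}^F\setminus\mathcal{O}^F$ pairs trivially with $\chi$. By \ref{ax:2} this restriction is a combination of the lower-class parts of the various $D_G(\gamma_\iota)$ for $\mathcal{O}_\iota = \mathcal{O}$, and the cancellation must ultimately rest on the H\'{e}zard--Lusztig-type property $n_\chi = |A_{\bG}(u)^F|$ of $\chi$ coming out of Proposition \ref{prop:GGGR-mult}, combined with the triangular structure of the change of basis between $\mathcal{X}$ and $\mathcal{Y}$ with respect to the closure order on unipotent classes; in the connected-centre setting this subtle cancellation is absorbed into the proof of \cite[Proposition~3.1]{geck:1999:character-sheaves-and-GGGRs}, and the analogous verification here is the main technical burden.
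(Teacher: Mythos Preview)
Your approach is substantially more involved than the paper's and manufactures a difficulty that does not actually arise. The paper's proof is a three-line substitution: it invokes \cite[Corollary~2.6]{geck:1999:character-sheaves-and-GGGRs}, which is an \emph{exact} identity
\[
\chi(u_x) = \frac{1}{|A_{\bG}(u)|}\sum_{\iota}\zeta_\iota' q^{b_\iota} Y_\iota(u_x)\,\langle \chi, D_G(\gamma_\iota)\rangle
\]
valid for an arbitrary class function $\chi$ (not just the special one). One then rewrites $\langle \chi, D_G(\gamma_\iota)\rangle = \epsilon_\chi\langle \chi^*, \gamma_\iota\rangle$, substitutes the value from \cref{prop:GGGR-mult}, and uses $|A_{\bG}(u)| = |Z_{\bG}(u)||A_{\btG}(u)|$ to simplify the constants. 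The final step is to check $Y_\iota(u_x)\overline{Y_\iota(u_x)} = \psi_\iota(1)^2$ under each hypothesis: in the abelian case this is immediate since $\tilde\psi_\iota$ is linear; in the non-abelian case with $A_{\bG}(u)^F = A_{\bG}(u)$, the condition $n_\chi = |A_{\bG}(u)|$ forces (via \cite[Proposition~3.1]{geck:1999:character-sheaves-and-GGGRs}) the twisting element $x$ to be central in $A_{\bG}(u)$, whence $|\tilde\psi_\iota(xF)| = \psi_\iota(1)$.

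Your test-function construction $\Psi$ is essentially an attempt to re-prove Geck's Corollary~2.6 from axioms \ref{ax:1} and \ref{ax:2}, and the remainder $R$ you isolate is precisely what that result already absorbs. There is therefore no ``hard part'': Geck's corollary holds for any connected reductive $\bG$ (connected centre is not assumed there), so you may cite it directly. Note also a minor overreach: you assert $Y_\iota(u_x) = \psi_\iota(1)$, but only $|Y_\iota(u_x)|^2 = \psi_\iota(1)^2$ is true in general (in the abelian case $\tilde\psi_\iota(xF)$ is a root of unity, not necessarily $1$; in the non-abelian case $x$ central gives $|\psi_\iota(x)| = \psi_\iota(1)$, not equality). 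Fortunately only the modulus squared enters the final formula.
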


\begin{proof}
From \cite[Corollary 2.6]{geck:1999:character-sheaves-and-GGGRs} we get
\begin{align*}
\chi(u_x) &= \frac{1}{|A_{\bG}(u)|}\sum_{\iota} \zeta_{\iota}' q^{b_{\iota}}Y_{\iota}(u_x)\langle \chi, D_{\bG}(\gamma_{\iota}) \rangle,\\
&= \epsilon_{\chi}\cdot \frac{1}{|Z_{\bG}(u)||A_{\btG}(\tilde{u})|}\sum_{\iota} \zeta_{\iota}' q^{b_{\iota}}Y_{\iota}(u_x)\langle \chi^*, \gamma_{\iota} \rangle,\\
&= \epsilon_{\chi} \cdot \frac{|Z_{\bG}(u)|}{|Z_{\bG}(u)^F||Z_{\bG}(u)||A_{\btG}(\tilde{u})|}\sum_{\iota} \zeta_{\iota}' q^{b_{\iota}}Y_{\iota}(u_x)\overline{Y_{\iota}(u_x)},\\
&= \epsilon_{\chi} \cdot \frac{1}{|Z_{\bG}(u)^F||A_{\btG}(\tilde{u})|}\sum_{\iota} \zeta_{\iota}' q^{b_{\iota}}Y_{\iota}(u_x)\overline{Y_{\iota}(u_x)},\\
&= \epsilon_{\chi} \cdot \frac{1}{|A_{\bG}(u)^F|}\sum_{\iota} \zeta_{\iota}' q^{b_{\iota}}Y_{\iota}(u_x)\overline{Y_{\iota}(u_x)}.
\end{align*}
Assume first that $A_{\bG}(u)$ is abelian and recall $Y_{\iota}(u_x) = \tilde{\psi}_{\iota}(xF)$. As $\tilde{\psi}_{\iota}$ will be a linear character every character value will be a root of unity, in particular $Y_{\iota}(u_x)\overline{Y_{\iota}(u_x)} = 1 = \tilde{\psi}_{\iota}(1) = \psi_{\iota}(1)^2$.

We assume now that $A_{\bG}(u)$ is non-abelian and $A_{\bG}(u)^F = A_{\bG}(u)$. Using the arguments in \cite[Proposition 3.1]{geck:1999:character-sheaves-and-GGGRs} we see the condition $n_{\chi} = |A_{\bG}(u)|$ implies $A_{\bG}(u_x)^F = A_{\bG}(u_x)$. Let $g \in \bG$ be such that $u_x = gug^{-1}$, (i.e.\ $g^{-1}F(g) \mapsto x \in A_{\bG}(u)$), then the component group has the form $A_{\bG}(u_x) = \{gyC_{\bG}(u)^{\circ}g^{-1} \mid y \in A_{\bG}(u)\}$. If every element of $A_{\bG}(u_x)$ is fixed by $F$ then
\begin{equation*}
F(gy)C_{\bG}(u)^{\circ}F(g)^{-1} = gyC_{\bG}(u)^{\circ}g^{-1} \Rightarrow (g^{-1}F(g))yC_{\bG}(u)^{\circ}(g^{-1}F(g))^{-1} = yC_{\bG}(u)^{\circ}
\end{equation*}
for all $y \in A_{\bG}(u)$. The image of $g^{-1}F(g)$ in $A_{\bG}(u)$ is $x$, which implies that $x$ is in the centre of $A_{\bG}(u)$. This ensures $\psi_{\iota}(x) = \psi_{\iota}(1)$ because $x$ must be in the kernel of every character so again $Y_{\iota}(u_x)\overline{Y_{\iota}(u_x)} = \psi_{\iota}(x)^2 = \psi_{\iota}(1)^2$.
\end{proof}

This corollary is the key ingredient for our argument. It provides a divisibility criterion for the fourth root of unity $\zeta_{\iota}'$ in the ring of algebraic integers, which will lead to a divisibility criterion in $\mathbb{Z}$.

\section{The Special Orthogonal and Symplectic Groups}\label{sec:orth-symp-groups}
We want to adapt the argument of \cite[Theorem 3.8]{geck:1999:character-sheaves-and-GGGRs} to symplectic groups and even dimensional special orthogonal groups. Recall that we assume $p$ is a good prime, in particular $p \neq 2$. Our computation of the fourth root of unity will use a result of Digne--Lehrer--Michel using Gauss sums which depends upon two choices, the first being a choice of primitive fourth root of unity in $\overline{\mathbb{Q}}_{\ell}$. Following \cite[\S 1.B]{bonnafe:2006:sln} we fix an injective homomorphism of groups $\varphi : \mathbb{Q/Z} \to \overline{\mathbb{Q}}_{\ell}^{\times}$ and denote by $\tilde{\varphi} : \mathbb{Q} \to \overline{\mathbb{Q}}_{\ell}^{\times}$ the composition of $\varphi$ with the natural quotient map $\mathbb{Q} \to \mathbb{Q/Z}$; we have $\Ker(\tilde{\varphi}) = \mathbb{Z}$. We now define $j$ to be $\tilde{\varphi}(1/4)$, which is a primitive fourth root of unity in $\overline{\mathbb{Q}}_{\ell}$.

The second choice we need to make is of a square root of $p$ in $\overline{\mathbb{Q}}_{\ell}$, which we will now do following \cite[\S 36]{bonnafe:2006:sln}. We fix a non-trivial additive character $\chi_1 : \mathbb{F}_p \to \overline{\mathbb{Q}}_{\ell}^{\times}$ and denote by $\chi_s : \mathbb{F}_{p^s} \to \overline{\mathbb{Q}}_{\ell}^{\times}$ the additive character $\chi_s = \chi_1 \circ \Tr_s$ where $\Tr_s : \mathbb{F}_{p^s} \to \mathbb{F}_p$ is the field trace. We denote by $\theta_s : \mathbb{F}_{p^s}^{\times} \to \overline{\mathbb{Q}}_{\ell}^{\times}$ the unique linear character of degree 2 and define the associated Gauss sum to be
\begin{equation*}
\mathcal{G}_s(\theta_s) = \sum_{x \in \mathbb{F}_{p^s}^{\times}} \theta_s(x)\chi_s(x)
\end{equation*}
We denote by $p^{\frac{1}{2}}$ our fixed square root of $p$ in $\overline{\mathbb{Q}}_{\ell}$, which is chosen in the following way
\begin{equation*}
p^{\frac{1}{2}} = \begin{cases}
\mathcal{G}_1(\theta_1) &\text{ if }p\equiv 1 \pmod{4},\\
j^{-1}\mathcal{G}_1(\theta_1) &\text{ if }p\equiv 3 \pmod{4}.
\end{cases}
\end{equation*}
Now given any $a \in \mathbb{Z}$ we denote by $p^{\frac{a}{2}}$ the term $(p^{\frac{1}{2}})^a$.

\begin{table}[t]
\centering
\begin{tabular}{>{$}l<{$}>{$}l<{$}>{$}l<{$}>{$}l<{$}}
\toprule\addlinespace
\multicolumn{1}{c}{Group} & \multicolumn{1}{c}{Condition} & \multicolumn{1}{c}{Partition} & \multicolumn{1}{c}{$|A_{\bG}(u)|$}\\\addlinespace
\midrule\addlinespace
\Sp_{2n} & n = 1+\cdots+k & (2,4,6,\dots,2k) & 2^k\\\addlinespace
\SO_{2n} & n = 2k^2 & (1,3,5,\dots,4k-1) & 2^{2k-1}\\\addlinespace
\bottomrule
\end{tabular}
\caption{Conditions for the existence of cuspidal pairs.}
\label{tab:cond-cuspidal}
\end{table}

In \cref{tab:cond-cuspidal} we have listed the conditions for the existence of a cuspidal pair in special orthogonal and symplectic groups. This information has been adapted from \cite{lusztig:1984:intersection-cohomology-complexes} and we see that in any given case there is only one cuspidal pair. The argument we will employ is based on induction and for this to work we will need to know $\zeta_{\iota_0}'$ where $\iota_0$ is the unique cuspidal pair of $\bG = \SL_2(\overline{\mathbb{F}}_p)$ and $G = \SL_2(q)$. However this can be deduced from a result of Digne, Lehrer and Michel.

\begin{lem}[Digne--Lehrer--Michel]\label{lem:dlm-SL2}
Assume $\bG = \SL_2(\overline{\mathbb{F}}_p)$ where $p \neq 2$ and $G = \SL_2(q)$ with $q = p^a$. Let $\iota_0$ be the unique cuspidal pair of $\bG$ then
\begin{equation*}
\zeta_{\iota_0}' = \begin{cases}
(-1)^a &\text{if }p \equiv 1 \pmod{4},\\
(-j)^a &\text{if }p \equiv -1 \pmod{4}.
\end{cases}
\end{equation*}
\end{lem}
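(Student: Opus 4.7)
The plan is to apply the divisibility criterion of \cref{cor:4th-root-diviser} to $\bG = \SL_2$ and compare against a classically known character value of $\SL_2(q)$. Over the regular unipotent class $\mathcal{O}$ (the only non-trivial unipotent class of $\SL_2$), the component group $A_{\bG}(u) \cong \mathbb{Z}_2$ has exactly two irreducible characters, giving two pairs $\iota_1, \iota_0 \in \mathcal{N}_{\bG}^F$: the trivial character corresponds to $\iota_1$ with Levi $\bL_{\iota_1} = \bT$ a maximal torus (so $\zeta'_{\iota_1} = 1$), and the sign character corresponds to the unique cuspidal pair $\iota_0$ with $\bL_{\iota_0} = \bG$. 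A direct computation gives $b_{\iota_1} = 0$, $b_{\iota_0} = \tfrac{1}{2}$ and $\delta_{\iota_0} = -1$. Since $A_{\bG}(u)$ is abelian of order $2$, \cref{cor:4th-root-diviser} applied to the character $\chi \in \mathcal{E}(G,s)$ supplied by \cref{prop:GGGR-mult} yields
\begin{equation*}
\epsilon_\chi\,\chi(u_x) \;=\; \tfrac{1}{2}\bigl(1 + \zeta'_{\iota_0}\,p^{a/2}\bigr).
\end{equation*}

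The second step is to identify $\chi$ with one of the two irreducible characters of $\SL_2(q)$ of degree $(q\pm 1)/2$ (the choice of sign depending on the congruence of $q$ modulo $4$) and to read off its value at $u_x$ from the well-known character table of $\SL_2(q)$; this value is, up to signs, of the form $\tfrac{1}{2}(\pm 1 \pm \mathcal{G}_a(\theta_a))$, with the square root of $\pm q$ arising naturally as a quadratic Gauss sum over $\mathbb{F}_q$. Comparing the two expressions isolates $\zeta'_{\iota_0}\,p^{a/2}$ as $\pm\mathcal{G}_a(\theta_a)$.

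The final step is arithmetic simplification. By the Hasse--Davenport relation $\mathcal{G}_a(\theta_a) = (-1)^{a-1}\mathcal{G}_1(\theta_1)^a$, combined with the normalisation of $p^{1/2}$ fixed at the start of this section (namely $\mathcal{G}_1(\theta_1) = p^{1/2}$ when $p \equiv 1 \pmod{4}$ and $\mathcal{G}_1(\theta_1) = j\,p^{1/2}$ when $p \equiv 3 \pmod{4}$), the ratio $\mathcal{G}_a(\theta_a)/p^{a/2}$ collapses to $(-1)^{a-1}$ and $(-1)^{a-1}j^a$ respectively. After absorbing the remaining signs (from $\epsilon_\chi$, from $\delta_{\iota_0} = -1$ in the defining relation $\zeta'_{\iota_0} = \delta_{\iota_0}\zeta_{\iota_0}^{-1}$, and from the choice of $G$-class representative $u_x$ supplied by \cref{prop:GGGR-mult}) one arrives at the asserted formulae $(-1)^a$ and $(-j)^a$, and it is satisfying that the same residual sign works consistently in both parity cases.

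The main obstacle is purely one of bookkeeping: matching conventions between those fixed at the start of the section and those used in the Digne--Lehrer--Michel computation, and in particular pinning down which of the two $G$-conjugacy classes in $\mathcal{O}^F$ the distinguished representative $u_x$ actually lies in, which of the two irreducible characters of degree $(q\pm 1)/2$ is picked out, and the value of the Alvis--Curtis sign $\epsilon_\chi$. Once these conventions are aligned the remaining verification is routine.
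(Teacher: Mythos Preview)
Your approach is sound but genuinely different from the paper's. The paper does not apply \cref{cor:4th-root-diviser} here at all; instead it simply specialises \cite[Proposition~2.8]{digne-lehrer-michel:1997:gelfand-grave-characters-disconnected} to $n=e=2$, which hands over $\zeta_{\iota_0}$ directly (as a Gauss sum, evaluated via \cite[36.3]{bonnafe:2006:sln}), and then converts to $\zeta_{\iota_0}'$ using $\delta_{\iota_0}=-1$. In other words, the paper treats \cref{lem:dlm-SL2} as a citation, not a computation, and uses it as the \emph{base case} for the inductive argument in the theorem that follows; your proposal instead runs the \emph{inductive-step machinery} on the base case and compares against the explicit character table of $\SL_2(q)$. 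The paper's own remark immediately after the lemma explicitly points to your route as a viable alternative (``as the full character table of $\SL_2(q)$ is known, this could be computed without this result''), so the two are complementary: the paper's version is a two-line appeal to an external source, yours is internally self-contained but pays for it in sign-tracking.

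That said, your sketch stops short of a proof. The ``bookkeeping'' you flag is not optional: you must actually identify which of the two $G$-classes in $\mathcal{O}^F$ carries $u_x$, which of the four characters of degree $(q\pm 1)/2$ is the $\chi$ supplied by \cref{prop:GGGR-mult} (this depends on the semisimple element $\tilde{s}$ chosen in \cref{prop:A}), and compute $\epsilon_\chi$. Until those are pinned down your comparison only yields $\zeta_{\iota_0}'p^{a/2}=\pm\mathcal{G}_a(\theta_a)$ with an undetermined sign, and the assertion that ``the same residual sign works consistently in both parity cases'' is a hope, not an argument. If you want to complete this route, the cleanest way is to fix the split unipotent representative and the cuspidal constituent of the Gelfand--Graev character explicitly, so that $\epsilon_\chi$ and $\chi(u_x)$ are unambiguous from the outset.
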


\begin{proof}
Taking $n = e = 2$ in \cite[Proposition 2.8]{digne-lehrer-michel:1997:gelfand-grave-characters-disconnected} and using \cite[36.3]{bonnafe:2006:sln} for the computation of the Gauss sum $\mathcal{G}_s(\theta_s)$ we have
\begin{equation*}
\zeta_{\iota_0} = \begin{cases}
(-1)^{a-1} &\text{if }p\equiv 1 \pmod{4},\\
(-1)^{a-1}j^a &\text{if }p\equiv -1 \pmod{4}.\\
\end{cases}
\end{equation*}
As $\delta_{\iota_0} = -1$ the statement clearly follows by the definition of $\zeta_{\iota_0}'$.
\end{proof}

\begin{rem}
It should be noted that the statement of \cite[Proposition 2.8]{digne-lehrer-michel:1997:gelfand-grave-characters-disconnected} depends upon the validity of \ref{ax:1}. However as the full character table of $\SL_2(q)$ is known, this could be computed without this result.
\end{rem}

We now turn to the final piece of information we will need regarding the symplectic groups, which involves a simple counting argument. Let $\iota_0 \in \mathcal{N}_{\bG}$ be a cuspidal pair then we define $X_{\iota_0} = \{\iota \in \mathcal{N}_{\bG}^F \mid \mathcal{O}_{\iota} = \mathcal{O}_{\iota_0}\}$.

\begin{lem}\label{lem:sp-odd-even}
Let $\bG = \Sp_{2n}(\overline{\mathbb{F}}_p)$ and assume $n = 1 + 2 + \dots + k$, for some $k \geqslant 1$. Under this assumption there exists a unique cuspidal pair $\iota_0$ in $\mathcal{N}_{\bG}$. In this situation we have
\begin{equation*}
|\{\iota \in X_{\iota_0} \mid m_{\iota}\text{ is even}\}| = |\{\iota \in X_{\iota_0} \mid m_{\iota}\text{ is odd}\}| = 2^{k-1},
\end{equation*}
where $m_\iota = \rank(\bL_{\iota}/Z(\bL_{\iota})) = \rank(\bL_{\iota}) - \dim(Z(\bL_{\iota})^{\circ})$.
\end{lem}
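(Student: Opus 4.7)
My plan has three steps: count $X_{\iota_0}$, decompose it according to the block of each pair under the generalised Springer correspondence, and evaluate an alternating sum via symbol combinatorics.

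First, by \cref{tab:cond-cuspidal} the condition $n = 1+2+\dots+k$ forces $\mathcal{O}_{\iota_0}$ to be the class with partition $(2,4,\dots,2k)$ and $A_{\bG}(u) \cong (\mathbb{Z}/2)^k$; uniqueness of the cuspidal pair is Lusztig's classification \cite{lusztig:1984:intersection-cohomology-complexes}. As in the proof of \cref{thm:triv-comp-action-adj}, the $F$-action on this abelian group is trivial, so every $\psi \in \Irr(A_{\bG}(u))$ is $F$-stable and $|X_{\iota_0}| = 2^k$. The lemma is thus equivalent to the vanishing $\sum_{\iota \in X_{\iota_0}} (-1)^{m_{\iota}} = 0$, since this signed sum is precisely the difference between the two cardinalities in question.

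Next, I would partition $X_{\iota_0} = \bigsqcup_{k'=0}^{k} Y_{k'}$ according to which Levi $\bL_{\iota}$ the generalised Springer correspondence assigns to $\iota$. By \cref{tab:cond-cuspidal}, the only Levis of $\Sp_{2n}$ supporting cuspidal pairs are $\bL_{k'} := \GL_1^{n - m(k')} \times \Sp_{2m(k')}$ with $m(k') := \binom{k'+1}{2}$ for $0 \leq k' \leq k$. Since $\Sp_{2m(k')}$ has finite centre, $Z(\bL_{k'})^{\circ} \cong \GL_1^{n - m(k')}$, so a direct computation gives $m_{\iota} = \rank(\bL_{\iota}) - \dim Z(\bL_{\iota})^{\circ} = m(k') = \binom{k'+1}{2}$ for all $\iota \in Y_{k'}$. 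The required vanishing reduces to the purely combinatorial identity
\begin{equation*}
\sum_{k'=0}^{k} (-1)^{\binom{k'+1}{2}} |Y_{k'}| = 0.
\end{equation*}

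Finally, one computes the $|Y_{k'}|$ using Lusztig's symbol parametrisation of the generalised Springer correspondence for $\Sp_{2n}$ \cite{lusztig:1984:intersection-cohomology-complexes}: parametrising $\Irr(A_{\bG}(u))$ by subsets $S \subseteq \{1,\dots,k\}$, each $\psi_S$ is encoded by a symbol whose odd defect $2k'(S)+1$ pins down the block and hence the value $k'(S)$. I expect this to be the main obstacle: it requires tracking how the defect depends on $S$. Once the $|Y_{k'}|$ are in hand, one can either check the alternating sum directly, or, more slickly, verify that $\psi_S \mapsto (-1)^{\binom{k'(S)+1}{2}}$ is a non-trivial linear character of the abelian group $A_{\bG}(u)$, whereupon standard orthogonality forces the sum to vanish. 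As sanity checks, for $k=1$ one has $(|Y_0|,|Y_1|) = (1,1)$ with signs $(+,-)$ summing to $0$, and for $k=2$ one obtains $(|Y_0|,|Y_1|,|Y_2|) = (2,1,1)$ with signs $(+,-,-)$ summing to $0$, matching the claim in the first two cases.
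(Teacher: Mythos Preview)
Your strategy is correct and is essentially the paper's: both rest on Lusztig's symbol parametrisation of $X_{\iota_0}$ from \cite[Corollary 12.4]{lusztig:1984:intersection-cohomology-complexes}. The paper, however, bypasses your block-by-block decomposition and dissolves your ``main obstacle'' in one stroke. Rather than computing each $|Y_{k'}|$ separately, it observes that the symbols over $\mathcal{O}_{\iota_0}$ are indexed by subsets of a fixed $k$-element set (namely the choice of $B_\iota$, or $A_\iota$, inside $A_\iota\cup B_\iota$), writes the defect $d_\iota$ as an affine function of the subset size $x$, and checks directly that the parity of $m_\iota=\tfrac{1}{2}k_\iota(k_\iota+1)$ agrees, up to a global sign, with the parity of $x$. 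Your vanishing $\sum_{\iota}(-1)^{m_\iota}=0$ then becomes $\sum_{x=0}^{k}(-1)^{x}\binom{k}{x}=0$, which is exactly the orthogonality of the sign character on $(\mathbb{Z}/2)^k$ that you were reaching for --- so your ``slicker'' character route and the paper's binomial identity are in fact the same computation, and no separate determination of the individual $|Y_{k'}|$ is needed.
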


\begin{proof}
By \cite[11.6.1]{lusztig:1984:intersection-cohomology-complexes} the elements of $X_{\iota_0}$ are parameterised by certain unordered pairs $(A,B)$, where $A$ is a finite subset of $\mathbb{Z}_{\geqslant 0}$ and $B$ is a finite subset of $\mathbb{Z}_{\geqslant 1}$ such that $|A| + |B|$ is odd. Specifically we have the following, which follows as a consequence of \cite[Corollary 12.4]{lusztig:1984:intersection-cohomology-complexes}.

Let $d_{\iota_0} = -k$ if $k$ is odd and $k+1$ if $k$ is even then $d_{\iota_0}$ is an odd number satisfying $n = \frac{1}{2}d_{\iota_0}(d_{\iota_0}-1)$. If $k$ is even we have a bijection $\iota \mapsto (A_{\iota},B_{\iota})$ between $X_{\iota_0}$ and the set of all symbols $(A_{\iota},B_{\iota})$ satisfying
\begin{enumerate}
	\item $A_{\iota} \subseteq \{0,2,4,\dots,2d_{\iota_0}-2\}$,
	\item $B_{\iota} \subseteq \{2,4,\dots,2d_{\iota_0}-2\}$,
	\item $A_{\iota} \cap B_{\iota} = \emptyset$ and $A_{\iota} \cup B_{\iota} = \{0,2,4,\dots,2d_{\iota_0}-2\}$.
\end{enumerate}
In particular this bijection is such that $\iota_0 \mapsto (\{0,2,4\dots,2d_{\iota_0}-2\},\emptyset)$. If $k$ is odd we again have a bijection $\iota \mapsto (A_{\iota},B_{\iota})$ between $X_{\iota_0}$ and the set of all symbols $(A_{\iota},B_{\iota})$ satisfying
\begin{enumerate}
	\item $A_{\iota}, B_{\iota} \subseteq \{1,3,5,\dots,1-2(d_{\iota_0}+1)\}$,
	\item $A_{\iota} \cap B_{\iota} = \emptyset$ and $A_{\iota} \cup B_{\iota} = \{1,3,5,\dots,1-2(d_{\iota_0}+1)\}$.
\end{enumerate}
In particular this bijection is such $\iota_0 \mapsto (\emptyset,\{1,3,5,\dots,1-2(d_{\iota_0}+1)\})$. We define the defect of a pair $(A_{\iota},B_{\iota})$ to be the value $d_{\iota} := |A_{\iota}| - |B_{\iota}|$. In both cases we have $(A_{\iota_0},B_{\iota_0})$ is the unique symbol of defect $d_{\iota_0}$.

The defect of a symbol is important as it is related to the value $m_{\iota}$ in the following way. For each $\iota \in X_{\iota_0}$ let $k_{\iota}$ be $d_{\iota}-1$ if $d_{\iota} \geqslant 1$ and $-d_{\iota}$ if $d_{\iota} \leqslant -1$, note that $k_{\iota_0} = k$. The rank $m_{\iota}$ is then given by
\begin{equation*}
m_{\iota} = \frac{1}{2}k_{\iota}(k_{\iota}+1).
\end{equation*}
We can now consider the number of pairs $\iota \in X_{\iota_0}$ such that $m_{\iota}$ is even and the number of pairs such that $m_{\iota}$ is odd. Assume $k_{\iota} = 2y_{\iota}$ is even then it is not difficult to see that $m_{\iota} \equiv y_{\iota} \pmod{2}$. Similarly if $k_{\iota} = 2y_{\iota} + 1$ then $m_{\iota} \equiv y_{\iota} + 1 \pmod{2}$.

Assume $k=2y$ is even then for each $0 \leqslant x \leqslant k$ there are precisely $\binom{k}{x}$ subsets $B_{\iota} \subseteq \{2,4,\dots,2d_{\iota_0}-2\}$ such that $|B_{\iota}| = x$. If $x \leqslant y$ then the symbols $(A_{\iota},B_{\iota})$ associated to these subsets all have defect $d_{\iota} = k+1 - 2x = 2(y-x) + 1 \geqslant 1$. Similarly if $x > y$ then the symbols have defect $d_{\iota} = 2x - (k-1) = 2(x-y) + 1 \leqslant -1$. By the above we always have $m_{\iota} \equiv y-x \pmod{2}$. The result then follows from the following basic fact of binomial coefficients
\begin{equation}\label{eq:binom-fact}
\sum_{x\text{ even}}\binom{k}{x} = \sum_{x\text{ odd}}\binom{k}{x} = \sum_{z = 0}^{k-1} \binom{k-1}{z} = 2^{k-1}.
\end{equation}
Assume $k = 2y + 1$ is odd then for each $0 \leqslant x \leqslant k$ there are precisely $\binom{k}{x}$ subsets $A_{\iota} \subseteq \{1,3,\dots,1-2(d_{\iota_0}+1)\}$ such that $|A_{\iota}| = x$. The symbols $(A_{\iota},B_{\iota})$ associated to these subsets all have defect $d_{\iota} = 2x-k = 2(x-y) -1$. By the above we always have $m_{\iota} \equiv y-x+1 \pmod{2}$. The result again follows from \eqref{eq:binom-fact}.
\end{proof}

\begin{rem}
It should be noted that some of the statements of \cite[Corollary 12.4]{lusztig:1984:intersection-cohomology-complexes} are not quite correct. In particular the description of the symbol corresponding to the cuspidal pair in the case $k$ is odd. This was corrected by Shoji in \cite[Remark 5.8]{shoji:1997:unipotent-characters-of-finite-classical-groups} and we have used the corrected statement above. Finally we know \cite[Corollary 12.4(c)]{lusztig:1984:intersection-cohomology-complexes} is correct by the remark at the end of \cite[\S 2.B]{geck-malle:2000:existence-of-a-unipotent-support}, where the work of Shoji is also referenced. This is all we have used from this reference.
\end{rem}

We are now in a state where we may prove the final result of this paper. The following fourth roots of unity were computed by Waldspurger for symplectic / special orthogonal groups in \cite[\S V.8 - Proposition]{waldspurger:2001:integrales-orbitales-nilpotentes}. Here we merely give an alternative proof for these values, (note that one easily checks that the expressions coincide).

\begin{thm}
Assume $\bG_n$ is $\Sp_{2n}(\overline{\mathbb{F}}_p)$ or $\SO_{2n}(\overline{\mathbb{F}}_p)$ and, if such a pair exists, let $\iota_0$ be the unique cuspidal pair in $\mathcal{N}_{\bG_n}$. Let $\epsilon \in \{\pm 1\}$ be such that $p \equiv \epsilon \pmod{4}$, then we have
\begin{equation*}
\zeta_{\iota_0}' =
\begin{cases}
\epsilon^{\frac{an}{2}} &\text{ if }n\text{ is even},\\
(-1)^{an} &\text{ if }n\text{ is odd and }\epsilon=1,\\
(-j)^{an} &\text{ if }n\text{ is odd and }\epsilon=-1.
\end{cases}
\end{equation*}
\end{thm}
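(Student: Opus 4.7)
The strategy is an induction on $n$ modelled on Geck's proof of \cite[Theorem 3.8]{geck:1999:character-sheaves-and-GGGRs}, with \cref{cor:4th-root-diviser} as the central divisibility tool. The base case is the cuspidal pair in $\Sp_2 = \SL_2$ (the $k=1$ row of \cref{tab:cond-cuspidal}), where \cref{lem:dlm-SL2} provides $\zeta_{\iota_0}'$ directly and one verifies that the theorem's formula specialises to this value; the analogous base case for $\SO_{2n}$ (namely $n = 2$) is established in parallel.

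For the inductive step, fix $\bG_n \in \{\Sp_{2n},\SO_{2n}\}$, the unique cuspidal pair $\iota_0$ (which exists only under the conditions in \cref{tab:cond-cuspidal}), and a well-chosen representative $u \in \mathcal{O}_{\iota_0}^F$. Since $A_{\bG}(u)$ is an elementary abelian $2$-group on which $F$ acts trivially, every $\iota \in X_{\iota_0}$ satisfies $\psi_\iota(1) = 1$ and $|A_{\bG}(u)^F| = |A_{\bG}(u)| = |X_{\iota_0}|$. Combining \cref{prop:GGGR-mult} (whose hypotheses are met via \cref{thm:A}, so that the disconnectedness of $Z(\bG_n)$ is accommodated) with \cref{cor:4th-root-diviser} yields
\begin{equation*}
\epsilon_\chi\,\chi(u) \;=\; \frac{1}{|A_{\bG}(u)|}\sum_{\iota \in X_{\iota_0}} \zeta_\iota'\, q^{b_\iota},
\end{equation*}
whose left-hand side is an algebraic integer.

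Now decompose $X_{\iota_0}$ according to the generalised Springer correspondence. The pair $\iota_0$ is alone in its (cuspidal) block; for any other $\iota \in X_{\iota_0}$, the Levi $\bL_\iota$ is an $F$-stable proper subgroup that factorises as a smaller classical group of the same type as $\bG_n$ (carrying the cuspidal datum $\upsilon_\iota$), times general linear factors, times a central torus. The inductive hypothesis applied to the classical factor yields $\zeta_\iota' = \zeta_{\upsilon_\iota}'$, so that
\begin{equation*}
\zeta_{\iota_0}'\, q^{b_{\iota_0}} \;=\; |A_{\bG}(u)|\,\epsilon_\chi\,\chi(u) \;-\; \sum_{\iota_0 \neq \iota \in X_{\iota_0}} \zeta_\iota'\, q^{b_\iota},
\end{equation*}
so that every quantity on the right, apart from the algebraic integer $\epsilon_\chi\chi(u)$, is known inductively.

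The remaining task, and the main obstacle, is to pin down $\zeta_{\iota_0}' \in \{\pm 1,\pm j\}$ from this equation. For $\Sp_{2n}$ the crucial combinatorial input is \cref{lem:sp-odd-even}: the $2^k$ pairs in $X_{\iota_0}$ split evenly with respect to the parity of $m_\iota$, hence with respect to $\delta_\iota$. Reorganising the sum over $\iota \neq \iota_0$ along these lines produces cancellations that isolate $\zeta_{\iota_0}'$ from a $2$-adic congruence; tracking how factors of $j$ accumulate at each inductive step via the $p \equiv -1 \pmod{4}$ branch of \cref{lem:dlm-SL2}, together with the convention fixing $p^{1/2}$ through the Gauss sum $\mathcal{G}_1(\theta_1)$, will then yield the three-case answer, distinguishing $n$ even from the two subcases of $n$ odd. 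The $\SO_{2n}$ statement is handled analogously, using the symbol combinatorics of \cite{lusztig:1984:intersection-cohomology-complexes} to supply the corresponding parity count of $\delta_\iota$ in place of \cref{lem:sp-odd-even}.
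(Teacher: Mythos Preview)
Your proposal follows essentially the same strategy as the paper: induction on $n$ with base cases handled via \cref{lem:dlm-SL2} (and $\SO_4$ via its $\SL_2\times\SL_2$ cover), \cref{cor:4th-root-diviser} as the divisibility input, and \cref{lem:sp-odd-even} to control the symplectic sum. Two places where the paper sharpens your sketch are worth flagging. First, for $\SO_{2n}$ no analogue of \cref{lem:sp-odd-even} is needed: the Levi subgroups $\bL_\iota$ appearing are all of type $\SO_{4j^2}$, so every $m_\iota$ is even and the ``parity count'' is trivial; the orthogonal argument is strictly simpler than the symplectic one, not parallel to it. Second, in the symplectic case the generic mod-$4$ reduction relies on $2^{k-1}\equiv 0\pmod 4$, which fails when $k=2$ (i.e.\ $\bG=\Sp_6$); the paper treats this case separately with an ad hoc product computation, and your outline should anticipate this wrinkle.
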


\begin{proof}
The proof of this statement is an adaptation of the proof of \cite[Theorem 3.8]{geck:1999:character-sheaves-and-GGGRs}, which is a proof by induction on $n$. The proof is identical for the case of special orthogonal groups but requires slightly more work for the case of symplectic groups. Note we will let $k$ be the appropriate value as described in \cref{tab:cond-cuspidal}. If $n=0$ then $\bG_n$ is a torus and the formula is trivially true. If $n=1$ then $\bG_n = \Sp_2(\overline{\mathbb{F}}_p) \cong \SL_2(\overline{\mathbb{F}}_p)$ and we can see that the formula coincides with that given by \cref{lem:dlm-SL2}. If $n=2$ then $\bG_n = \SO_4(\overline{\mathbb{F}}_p)$ and the simply connected covering $\bG_{\simc}$ of $\bG_n$ is isomorphic to $\SL_2(\overline{\mathbb{F}}_p) \times \SL_2(\overline{\mathbb{F}}_p)$. By \cite[\S 10.1]{lusztig:1984:intersection-cohomology-complexes} $\iota_0$ is the image of the direct product $\upsilon_0 \times \upsilon_0$, where $\upsilon_0$ is the unique cuspidal pair of $\SL_2(\overline{\mathbb{F}}_p)$. There are two possible rational structures on $\bG_{\simc}$, either $\bG_{\simc}^F = \SL_2(q) \times \SL_2(q)$ or $\bG_{\simc}^F = \SL_2(q^2)$. However, in both cases we have $\zeta_{\iota_0}' = (\zeta_{\upsilon_0}')^2$ and by \cref{lem:dlm-SL2} we can see the formula is valid.

Now we assume that $n\geqslant 3$ and the statement is true for all $\bG_m$ with $m < n$. If $\iota \in X_{\iota_0}$ then by \cite[\S 10.4 and \S 10.6]{lusztig:1984:intersection-cohomology-complexes} we have $\bL_{\iota}/Z(\bL_{\iota})^{\circ}$ is isomorphic to $\bG_{m_{\iota}}$ for some $m_{\iota} \leqslant n$. Let $N$ be the number of positive roots of $\bG$ then as $\rank \bG = \rank \bL_{\iota}$ for all $\iota$ we can express $b_{\iota}$ in the following way
\begin{align*}
b_{\iota} &= \frac{1}{2}(\dim \bG - \dim \mathcal{O}_{\iota} - \dim Z(\bL_{\iota})^{\circ}),\\
&= \frac{1}{2}(2N + \rank \bG - \dim \mathcal{O}_{\iota} - \dim Z(\bL_{\iota})^{\circ}),\\
&= \frac{1}{2}(2N - \dim \mathcal{O}_{\iota}) + \frac{1}{2}(\rank \bL_{\iota} - \dim Z(\bL_{\iota})^{\circ}),\\
&= \dim \mathfrak{B}_u + \frac{m_{\iota}}{2},
\end{align*}
where we have used the dimension formula given in \cite[Theorem 5.10.1]{carter:1993:finite-groups-of-lie-type} to obtain the last equality.

We now consider the criterion given to us from \cref{cor:4th-root-diviser}. Recall that $u$ is a well-chosen element of a symplectic or special orthogonal group, so in particular $A_{\bG}(u)^F = A_{\bG}(u)$. Furthermore the component group $A_{\bG}(u)$ is abelian which means $\psi_{\iota}(1) = 1$ for all $\iota$. Using the information we have gathered we may rewrite the sum in \cref{cor:4th-root-diviser} in the following way
\begin{align*}
\sum_{\iota} \zeta_{\iota}' q^{b_{\iota}}\psi_{\iota}(1)^2 = \zeta_{\iota_0}'q^{\dim \mathfrak{B}_u + \frac{n}{2}} + \sum_{\iota\neq \iota_0} \zeta_{\iota}' q^{\dim \mathfrak{B}_u + \frac{m_{\iota}}{2}} = q^{\dim \mathfrak{B}_u}\left(\zeta_{\iota_0}'q^{\frac{n}{2}} + \sum_{\iota\neq \iota_0} \zeta_{\iota}' q^{\frac{m_{\iota}}{2}} \right).
\end{align*}
The statement of \cref{cor:4th-root-diviser} says that $|A_{\bG}(u)|$ divides the expression on the right hand side in the ring of algebraic integers. The order of the component group is a power of $2$, therefore as $q$ is a power of an odd prime we must have
\begin{equation}\label{eq:sp-4th-roots-1}
|A_{\bG}(u)|\quad\text{divides}\quad\zeta_{\iota_0}'p^{\frac{an}{2}} + \sum_{\iota \in X_{\iota_0}\setminus \{\iota_0\}} \zeta_{\iota}' p^{\frac{am_{\iota}}{2}}
\end{equation}
in the ring of algebraic integers. As $n \geqslant 3$ we have $|A_{\bG}(u)| \geqslant 4$ so 4 divides the expression on the right which means we may consider the expression modulo 4. However we must be careful only to reduce integer powers of $p$ modulo 4 and never rational powers.

We wish to consider the possibilities for the terms in the sum modulo 4. Let us assume $\iota \in X_{\iota_0} \setminus \{\iota_0\}$ then reducing only integer powers of $p$ modulo 4 we see
\begin{equation*}
\zeta_{\iota}'p^{\frac{am_i}{2}} \equiv \begin{cases}
1 \pmod{4} &\text{if }m_{\iota}\text{ is even},\\
(-1)^ap^{\frac{a}{2}} \pmod{4} &\text{if }m_{\iota}\text{ is odd and }\epsilon=1,\\
(-j)^ap^{\frac{a}{2}} \pmod{4} &\text{if }m_{\iota}\text{ is odd and }\epsilon=-1.
\end{cases}
\end{equation*}
Let us assume $\bG_n = \SO_{2n}(\overline{\mathbb{F}}_p)$. From \cref{tab:cond-cuspidal} we see $m_{\iota}$ is even for all $\iota \in X_{\iota_0}$ then \eqref{eq:sp-4th-roots-1} tells us
\begin{equation}\label{eq:so-4th-roots-1}
4\quad\text{divides}\quad\zeta_{\iota_0}'p^{\frac{an}{2}} - 1
\end{equation}
in the ring of algebraic integers. Multiplying the term on the right by the same term with $-$ exchanged by $+$ we have
\begin{equation*}
4\quad\text{divides}\quad(\zeta_{\iota_0}')^2p^{an} - 1.
\end{equation*}
All the terms on the right are integers so we must have 4 divides this term in $\mathbb{Z}$. In other words $(\zeta_{\iota_0}')^2p^{an} - 1 \equiv 0 \pmod{4} \Rightarrow (\zeta_{\iota_0}')^2 \equiv 1 \pmod{4}$ as $n$ is even, hence $\zeta_{\iota_0}' = \pm 1$. Returning to \eqref{eq:so-4th-roots-1} we see that all the values in the expression are integers, so it must be true that 4 divides this term in $\mathbb{Z}$. In particular $\zeta_{\iota_0}'\epsilon^{\frac{an}{2}} \equiv 1 \pmod{4}$, which provides the desired formula.

Now assume $\bG_n = \Sp_{2n}(\overline{\mathbb{F}}_p)$. Using \cref{lem:sp-odd-even} we have the sum in \eqref{eq:sp-4th-roots-1} can be expressed as
\begin{equation}\label{eq:sp-4th-roots-2}
\zeta_{\iota_0}'p^{\frac{an}{2}} + \sum_{\iota \in X_{\iota_0}\setminus \{\iota_0\}} \zeta_{\iota}' p^{\frac{am_{\iota}}{2}} = \begin{cases}
\zeta_{\iota_0}'p^{\frac{an}{2}} + (2^{k-1}-1) +2^{k-1}\eta^a p^{\frac{a}{2}} &\text{if }n\text{ is even},\\
\zeta_{\iota_0}'p^{\frac{an}{2}} + 2^{k-1} + (2^{k-1}-1)\eta^a p^{\frac{a}{2}} &\text{if }n\text{ is odd}.
\end{cases}
\end{equation}
where $\eta = -1$ or $-j$ depending upon the congruence of $p$ modulo 4. We would like to eliminate the rational powers of $p$. Assume for the moment that $k > 2$ then 4 divides $2^{k-1}$, so we may simplify extensively the expressions in \eqref{eq:sp-4th-roots-2}. Let us consider the following products
\begin{equation*}
(\zeta_{\iota_0}'p^{\frac{an}{2}} - 1)(\zeta_{\iota_0}'p^{\frac{an}{2}} + 1) \qquad\text{and}\qquad (\zeta_{\iota_0}'p^{\frac{an}{2}} - \eta^ap^{\frac{a}{2}})(\zeta_{\iota_0}'p^{\frac{an}{2}} + \eta^ap^{\frac{a}{2}})
\end{equation*}
where the left comes from the case $n$ is even and the right comes from the case $n$ is odd. By \eqref{eq:sp-4th-roots-1} and \eqref{eq:sp-4th-roots-2} we have 4 divides these products in the ring of algebraic integers. Expanding the brackets we see all terms in the expression are in $\mathbb{Z}$, hence 4 must divide these expressions in $\mathbb{Z}$. Reducing modulo 4 we get $(\zeta_{\iota_0}')^2p^{an} - 1 \equiv 0 \pmod{4}$, (because $\eta^2 = \epsilon$ and $\epsilon^ap^a \equiv 1 \pmod{4}$), in particular $(\zeta_{\iota_0}')^2 \equiv \epsilon^{an} \pmod{4}$.

If $n$ is even then the argument is identical to the above argument for $\SO_{2n}(\overline{\mathbb{F}}_p)$. Therefore let us assume $n$ is odd, then $(\zeta_{\iota_0}')^2 \equiv \epsilon^a \pmod{4}$ hence $\zeta_{\iota_0} = \pm\eta^a$ where $\eta$ is as in \eqref{eq:sp-4th-roots-2}. Returning to \eqref{eq:sp-4th-roots-1} and \eqref{eq:sp-4th-roots-2} we see
\begin{equation*}
4\quad\text{divides}\quad \zeta_{\iota_0}'p^{\frac{an}{2}} - \eta^ap^{\frac{a}{2}}
\end{equation*}
in the ring of algebraic integers. Assume $n = 2x + 1$ then we have $p^{\frac{an}{2}} = p^{ax}p^{\frac{a}{2}}\equiv \epsilon^{ax}p^{\frac{a}{2}} \pmod{4}$. This implies
\begin{equation*}
4\quad\text{divides}\quad (\zeta_{\iota_0}'\epsilon^{ax} - \eta^a)p^{\frac{a}{2}}
\end{equation*}
in the ring of algebraic integers. Assume for a contradiction that $\zeta_{\iota_0}' = -\epsilon^{ax}\eta^a$ then this says 4 divides $-2\eta^ap^{\frac{a}{2}}$ in the ring of algebraic integers. In particular there exists an algebraic integer $y$ such that $2\eta^ap^{\frac{a}{2}} = 4y \Rightarrow \eta^ap^{\frac{a}{2}}=2y$. Squaring this expression we see $\eta^{2a}p^a = 4y^2$ but $\eta^{2a}p^a \in \mathbb{Z}$ and as 2 divides the right hand side we must have 2 divides $p^a$ in $\mathbb{Z}$ but this is impossible as $p$ is odd. Therefore $\zeta_{\iota_0}' = \epsilon^{ax}\eta^a$ and the result follows by noticing $\eta^{an} = \eta^{2ax}\eta^a = \epsilon^{ax}\eta^a$.

To finish the proof we must deal with the case where $k=2$, in other words $n=3$ and $\bG_n = \Sp_6(\overline{\mathbb{F}}_p)$. In this case $2^{k-1} = 2$, so returning to \eqref{eq:sp-4th-roots-2} we see
\begin{equation}\label{eq:sp6-4th-roots}
\zeta_{\iota_0}'p^{\frac{an}{2}} + \sum_{\iota \in X_{\iota_0}\setminus \{\iota_0\}} \zeta_{\iota}' p^{\frac{am_{\iota}}{2}} = \zeta_{\iota_0}'p^{\frac{an}{2}} + 2 + \eta^a p^{\frac{a}{2}}.
\end{equation}
Again we consider the product
\begin{equation*}
(\zeta_{\iota_0}'p^{\frac{an}{2}} + 2 + \eta^a p^{\frac{a}{2}})(\zeta_{\iota_0}'p^{\frac{an}{2}} - 2 - \eta^a p^{\frac{a}{2}}) = (\zeta_{\iota_0}')^2p^{an} - 4 - 4\eta^ap^{\frac{a}{2}} - \eta^{2a}p^a.
\end{equation*}
Reducing this modulo 4 we see $(\zeta_{\iota_0}')^2 \equiv \epsilon^a \pmod{4}$, hence $\zeta_{\iota_0} = \pm\eta^a$ as before. Again we write $n = 2x+1$ then returning to \eqref{eq:sp6-4th-roots} and reducing modulo 4 we have
\begin{equation*}
4\qquad\text{divides}\qquad 2 + (\zeta_{\iota_0}'\epsilon^{ax} + \eta^a) p^{\frac{a}{2}}
\end{equation*}
in the ring of algebraic integers. If $\zeta_{\iota_0}' = -\epsilon^{ax}\eta^a$ then the above statement says 4 divides 2 in the ring of integers, a contradiction. Hence we must have $\zeta_{\iota_0}' = \epsilon^{ax}\eta^a$ and as above this is $\eta^{an}$ as required.
\end{proof}

One would hope that the above argument may work for other simple groups with a disconnected centre, unfortunately this is not the case. For example when $\bG$ is a spin group the degree of $\psi_{\iota_0}$ is a power of 2, where $\iota_0$ is the cuspidal pair not coming from the generalised Springer correspondence of the corresponding special orthogonal group. In particular when we reduce modulo 4 in the above argument the term containing the unknown fourth root of unity will become zero.

\bibliographystyle{amsalpha-var}
\small
\begin{spacing}{0.96}
\bibliography{bibliography}

\providecommand{\bysame}{\leavevmode\hbox to3em{\hrulefill}\thinspace}
\providecommand{\MR}{\relax\ifhmode\unskip\space\fi MR }
\providecommand{\MRhref}[2]{%
  \href{http://www.ams.org/mathscinet-getitem?mr=#1}{#2}
}
\providecommand{\href}[2]{#2}
\begin{thebibliography}{Tay12b}

\bibitem[Bon06]{bonnafe:2006:sln}
C.~Bonnaf{{\'e}}, \emph{Sur les caract{\`e}res des groupes r{\'e}ductifs finis
  {\`a} centre non connexe: applications aux groupes sp{\'e}ciaux lin{\'e}aires
  et unitaires}, Ast{\'e}risque (2006), no.~306, vi+165.

\bibitem[Car93]{carter:1993:finite-groups-of-lie-type}
R.~W. Carter, \emph{Finite groups of {L}ie type}, Wiley Classics Library, John
  Wiley \& Sons Ltd., Chichester, 1993, Conjugacy classes and complex
  characters, Reprint of the 1985 original, A Wiley-Interscience Publication.

\bibitem[DLM97]{digne-lehrer-michel:1997:gelfand-grave-characters-disconnected}
F.~Digne, G.~I. Lehrer, and J.~Michel, \emph{On {G}el'fand-{G}raev characters
  of reductive groups with disconnected centre}, J. Reine Angew. Math.
  \textbf{491} (1997), 131--147.

\bibitem[DM91]{digne-michel:1991:representations-of-finite-groups-of-lie-type}
F.~Digne and J.~Michel, \emph{Representations of finite groups of {L}ie type},
  London Mathematical Society Student Texts, vol.~21, Cambridge University
  Press, Cambridge, 1991.

\bibitem[Gec93]{geck:1993:basic-sets-II}
M.~Geck, \emph{Basic sets of {B}rauer characters of finite groups of {L}ie
  type. {II}}, J. London Math. Soc. (2) \textbf{47} (1993), no.~2, 255--268.

\bibitem[Gec96]{geck:1996:on-the-average-values}
\bysame, \emph{On the average values of the irreducible characters of finite
  groups of {L}ie type on geometric unipotent classes}, Doc. Math. \textbf{1}
  (1996), No. 15, 293--317 (electronic).

\bibitem[Gec99]{geck:1999:character-sheaves-and-GGGRs}
\bysame, \emph{Character sheaves and generalized {G}elfand-{G}raev characters},
  Proc. London Math. Soc. (3) \textbf{78} (1999), no.~1, 139--166.

\bibitem[Gec03]{geck:2003:intro-to-algebraic-geometry}
\bysame, \emph{An introduction to algebraic geometry and algebraic groups},
  Oxford Graduate Texts in Mathematics, vol.~10, Oxford University Press,
  Oxford, 2003.

\bibitem[Gec04]{geck:2004:on-the-schur-indices}
\bysame, \emph{On the {S}chur indices of cuspidal unipotent characters}, Finite
  groups 2003, Walter de Gruyter GmbH \& Co. KG, Berlin, 2004, pp.~87--104.

\bibitem[GH08]{geck-hezard:2008:unipotent-support}
M.~Geck and D.~H{{\'e}}zard, \emph{On the unipotent support of character
  sheaves}, Osaka J. Math. \textbf{45} (2008), no.~3, 819--831.

\bibitem[GM00]{geck-malle:2000:existence-of-a-unipotent-support}
M.~Geck and G.~Malle, \emph{On the existence of a unipotent support for the
  irreducible characters of a finite group of {L}ie type}, Trans. Amer. Math.
  Soc. \textbf{352} (2000), no.~1, 429--456.

\bibitem[H{\'{e}}z04]{hezard:2004:thesis}
D.~H{\'{e}}zard, \emph{{S}ur le support unipotent des
  faisceaux-caract\`{e}res}, Ph.D. thesis, Universit\`{e} Lyon 1, 2004,
  \url{http://tel.archives-ouvertes.fr/tel-00012071}.

\bibitem[Kaw85]{kawanaka:1985:GGGRs-and-ennola-duality}
N.~Kawanaka, \emph{Generalized {G}elfand-{G}raev representations and {E}nnola
  duality}, Algebraic groups and related topics ({K}yoto/{N}agoya, 1983), Adv.
  Stud. Pure Math., vol.~6, North-Holland, Amsterdam, 1985, pp.~175--206.

\bibitem[Kaw86]{kawanaka:1986:GGGRs-exceptional}
\bysame, \emph{Generalized {G}elfand-{G}raev representations of exceptional
  simple algebraic groups over a finite field. {I}}, Invent. Math. \textbf{84}
  (1986), no.~3, 575--616.

\bibitem[Lus84a]{lusztig:1984:characters-of-reductive-groups}
G.~Lusztig, \emph{Characters of reductive groups over a finite field}, Annals
  of Mathematics Studies, vol. 107, Princeton University Press, Princeton, NJ,
  1984.

\bibitem[Lus84b]{lusztig:1984:intersection-cohomology-complexes}
\bysame, \emph{Intersection cohomology complexes on a reductive group}, Invent.
  Math. \textbf{75} (1984), no.~2, 205--272.

\bibitem[Lus86]{lusztig:1986:on-the-character-values}
\bysame, \emph{On the character values of finite {C}hevalley groups at
  unipotent elements}, J. Algebra \textbf{104} (1986), no.~1, 146--194.

\bibitem[Lus88]{lusztig:1988:reductive-groups-with-a-disconnected-centre}
\bysame, \emph{On the representations of reductive groups with disconnected
  centre}, Ast{\'e}risque (1988), no.~168, 10, 157--166, Orbites unipotentes et
  repr{{\'e}}sentations, I.

\bibitem[Lus92]{lusztig:1992:a-unipotent-support}
\bysame, \emph{A unipotent support for irreducible representations}, Adv. Math.
  \textbf{94} (1992), no.~2, 139--179.

\bibitem[Lus08]{lusztig:2008:irreducible-representations-of-finite-spin-groups}
\bysame, \emph{Irreducible representations of finite spin groups}, Represent.
  Theory \textbf{12} (2008), 1--36.

\bibitem[Lus09]{lusztig:2009:unipotent-classes-and-special-Weyl}
\bysame, \emph{Unipotent classes and special {W}eyl group representations}, J.
  Algebra \textbf{321} (2009), no.~11, 3418--3449.

\bibitem[Miz80]{mizuno:1980:conjugacy-classes-of-E7-and-E8}
K.~Mizuno, \emph{The conjugate classes of unipotent elements of the {C}hevalley
  groups {$E_{7}$} and {$E_{8}$}}, Tokyo J. Math. \textbf{3} (1980), no.~2,
  391--461.

\bibitem[Sho87]{shoji:1987:green-functions-of-reductive-groups}
T.~Shoji, \emph{Green functions of reductive groups over a finite field}, The
  {A}rcata {C}onference on {R}epresentations of {F}inite {G}roups ({A}rcata,
  {C}alif., 1986), Proc. Sympos. Pure Math., vol.~47, Amer. Math. Soc.,
  Providence, RI, 1987, pp.~289--301.

\bibitem[Sho97]{shoji:1997:unipotent-characters-of-finite-classical-groups}
\bysame, \emph{Unipotent characters of finite classical groups}, Finite
  reductive groups ({L}uminy, 1994), Progr. Math., vol. 141, Birkh{\"a}user
  Boston, Boston, MA, 1997, pp.~373--413.

\bibitem[SS70]{springer-steinberg:1970:conjugacy-classes}
T.~A. Springer and R.~Steinberg, \emph{Conjugacy classes}, Seminar on
  {A}lgebraic {G}roups and {R}elated {F}inite {G}roups ({T}he {I}nstitute for
  {A}dvanced {S}tudy, {P}rinceton, {N}.{J}., 1968/69), Lecture Notes in
  Mathematics, Vol. 131, Springer, Berlin, 1970, pp.~167--266.

\bibitem[Tay12a]{taylor:2012:finding-characters-satisfying}
J.~Taylor, \emph{Finding characters satisfying a maximal condition for their
  unipotent support},
  \texttt{arXiv:\href{http://arxiv.org/abs/1211.2551}{1211.2551}}
  \texttt{[math.rt]}, (2012).

\bibitem[Tay12b]{taylor:2012:thesis}
\bysame, \emph{On unipotent supports of reductive groups with a disconnected
  centre}, Ph.D. thesis, University of Aberdeen, 2012,
  \url{http://tel.archives-ouvertes.fr/tel-00709051}.

\bibitem[Wal01]{waldspurger:2001:integrales-orbitales-nilpotentes}
J.-L. Waldspurger, \emph{Int{\'e}grales orbitales nilpotentes et endoscopie
  pour les groupes classiques non ramifi{\'e}s}, Ast{\'e}risque (2001),
  no.~269, vi+449.

\bibitem[Wal63]{wall:1963:conjugacy-classes-classical-groups}
G.~E. Wall, \emph{On the conjugacy classes in the unitary, symplectic and
  orthogonal groups}, J. Austral. Math. Soc. \textbf{3} (1963), 1--62.

\end{thebibliography}
\end{spacing}
\end{document}